\newtheorem{theorem}{Theorem}[section]
\newtheorem{lemma}[theorem]{Lemma}
\newtheorem{proposition}[theorem]{Proposition}
\newtheorem{corollary}[theorem]{Corollary}
\newtheorem{definition}[theorem]{Definition}
\newtheorem{assumption}[theorem]{Assumption}
\newtheorem{example}[theorem]{Example}
\newtheorem{remark}[theorem]{Remark}
\numberwithin{figure}{section}
\numberwithin{equation}{section}
\newcommand{\vertiii}[1]{{\left\vert\kern-0.25ex\left\vert\kern-0.25ex\left\vert #1 
    \right\vert\kern-0.25ex\right\vert\kern-0.25ex\right\vert}}
\newcommand{\bfi}{\bfseries\itshape}
\title{{\bf A global structure-preserving kernel method for the learning of Poisson systems}}
\author{Jianyu Hu$^{1}$, Juan-Pablo~Ortega$^{1}$, and Daiying Yin$^{1}$}
\begin{document}
\maketitle

\begin{abstract}
A structure-preserving kernel ridge regression method is presented that allows the recovery of globally defined, potentially high-dimensional, and nonlinear Hamiltonian functions on Poisson manifolds out of datasets made of noisy observations of Hamiltonian vector fields. The proposed method is based on finding the solution of a non-standard kernel ridge regression where the observed data is generated as the noisy image by a vector bundle map of the differential of the function that one is trying to estimate. Additionally, it is shown how a suitable regularization solves the intrinsic non-identifiability of the learning problem due to the degeneracy of the Poisson tensor and the presence of Casimir functions. A full error analysis is conducted that provides convergence rates using fixed and adaptive regularization parameters. The good performance of the proposed estimator is illustrated with several numerical experiments.
\end{abstract}

\makeatletter
\addtocounter{footnote}{1}{%
\footnotetext{Jianyu Hu, Juan-Pablo Ortega, and Daiying Yin are with the Division of Mathematical Sciences, School of Physical and Mathematical Sciences, Nanyang Technological University, Singapore. Their email addresses are {\texttt{Jianyu.Hu@ntu.edu.sg}}, {\texttt{Juan-Pablo.Ortega@ntu.edu.sg}}, and {\texttt{YIND0004@e.ntu.edu.sg}}, respectively.}
}
\makeatother

\tableofcontents

\section{Introduction}
The study of Poisson systems plays a crucial role in understanding a wide range of physical phenomena, from classical mechanics to fluid dynamics \cite{Marsden1994, Ortega2004, holm2009geometric, holm2011geometricparti, holm2011geometricpartii}. These systems, characterized by their inherent geometric structures and conservation laws, present unique challenges in modeling and simulation. Traditional methods for learning and approximating Poisson systems often require extensive domain knowledge and computational resources. In recent years, several machine learning techniques have emerged as powerful tools for addressing complex scientific problems that could offer new avenues for efficiently learning the structures underlying  Poisson systems.

Recent literature has extensively explored the learning and prediction of some Poisson systems. These studies incorporate varying degrees of domain-specific knowledge into learning algorithms, enhancing their ability to accurately model and predict such systems' behavior. An initial approach involved directly learning the Hamiltonian \cite{greydanus2019hamiltonian,david2023symplectic, hu2024structure} or Lagrangian \cite{cranmer2020lagrangian,Chen2022DatadrivenMT} functions using neural networks or other machine learning models, such as Gaussian processes \cite{offen2024machinelearningcontinuousdiscrete, pförtner2024physicsinformedgaussianprocessregression, 9992733}. It is worth pointing out that proper variational integrators \cite{marsden2001discrete, leok2012general} should be adopted to match the predicted vector field with observed time series data \cite{zhu2020deephamiltoniannetworksbased}. Researchers have also proposed learning the generating function that produces the flow map \cite{pmlr-v139-chen21r, 10.1063/5.0048129}. Alternative approaches aim to parameterize a (possibly universal) family of symplectic or Poisson maps to fit the flow map directly \cite{chen2019symplectic, JIN2020166,bajars2023locally,jin2022learning, ELDRED2024106162}, with SympNet being a prominent example \cite{JIN2020166}. Using SympNet, symplectic autoencoders have been developed to achieve a structure-preserving model reduction of Hamiltonian systems \cite{brantner2023symplecticautoencodersmodelreduction}, effectively mapping high-dimensional systems to low-dimensional representations. Additionally, physics-informed neural networks have become popular for solving known physical partial differential equations with high precision \cite{RAISSI2019686} and convergence analysis in certain scenarios \cite{doumèche2023convergenceerroranalysispinns, CiCP-28-2042}, though they enforce the PDEs as loss functions and thus do not preserve the exact structure. In general, to characterize and replicate the flow of (port-)Hamiltonian systems with exact structure preservation remains challenging, with a notable exception in the literature for linear port-Hamiltonian systems \cite{JMLR:v25:23-0450}.

This paper proposes a {\it non-standard kernel ridge regression method that allows the recovery of globally defined and potentially high-dimensional and nonlinear Hamiltonian functions on Poisson manifolds out of datasets made of noisy observations of Hamiltonian vector fields}. We now spell out three key features of the approach proposed in this paper. 

First, much of the existing research in this field operates under the assumption that the underlying dynamical system is Hamiltonian or that the flow map is symplectic \cite{chen2019symplectic, JIN2020166}, allowing for the use of generating function models or universal parameterizations of symplectic maps to directly model the flow. However, this assumption does not extend to general Poisson systems on manifolds for which the Hamiltonian function cannot be globally expressed in canonical coordinates, and the flow map is generally not symplectic. Moreover, due to the potential degeneracy of the Poisson tensor, Poisson manifolds typically possess a family of functions known as Casimirs, which do not influence the Hamiltonian vector fields. Preserving these Casimirs is an intrinsic requirement for any machine learning algorithm applied to such systems. Inspired by the Darboux-Weinstein theorem \cite{weinstein1983local, libermann2012symplectic, vaisman2012lectures}, recent approaches have proposed learning coordinate transformations to convert coordinates into canonical ones, followed by inverse transformations to retrieve the dynamics \cite{jin2022learning, bajars2023locally}. While the local existence of these transformations is guaranteed, their global behavior remains unclear, particularly in cases where coordinate patches overlap, and numerical errors on the preservation of Casimir values accumulate. The Lie-Poisson Network \cite{ELDRED2024106162, eldred2024clpnets} attempts to approximate flows of the special case of Lie-Poisson systems using compositions of flow maps of linear systems, preserving Casimirs exactly through projection but not addressing the expressiveness of the constructed flows. Rather than learning the flow map, we propose to discover an underlying Hamiltonian function of the Poisson system from Hamiltonian vector fields in a structure-preserving manner. The presence of Casimir functions renders the identification of the Hamiltonian function from the observed vector field infeasible, which is a challenge that lies beyond the scope of the work already presented by the authors in \cite{hu2024structure} and necessitates the development of new algorithms to address this issue.

Second, while significant research has focused on the forward problem of learning the flow map of dynamical systems using approaches like Physics-Informed neural networks (PINNs), symbolic regression \cite{Brunton2016}, Gaussian processes, or kernel methods, the inverse problem of uncovering unknown Hamiltonian functions from data remains relatively underexplored. Unlike neural networks or likelihood-based methods that rely on iterative or stochastic gradient descent to optimize objective functions, kernel methods offer a more straightforward, easy-to-train, and data-efficient alternative. By transforming the optimization problem into a convex one, kernel methods enable the derivation of a closed-form solution for the Hamiltonian function estimation problem in a single step, even on manifolds. Another advantage of kernel methods is that by selecting a kernel with inherent symmetry, the learning scheme can be designed to automatically preserve it, which is particularly important when learning mechanical systems. We recall that due to the celebrated Noether's Theorem, symmetries usually carry in their wake additional conserved quantities when dealing with mechanical systems.  The Reproducing Kernel Hilbert Space (RKHS) framework also facilitates rigorous estimation and approximation error bounds, offering both theoretical insights and interpretability. Additionally, it is well-suited for the design of online learning paradigms.

Finally, while the literature on learning physical dynamical systems is extensive and the idea of structure-preserving kernel regression has already been proposed to recover either interacting potentials of particle systems \cite{feng2021learning} or Hamiltonian functions on Euclidean spaces \cite{hu2024structure}, the majority of studies have been confined to the context of Euclidean spaces, with a lack of attention to scenarios where the underlying configuration space is a more general manifold, which is typically the case in many applications like rigid body mechanics, robotics, fluids, or elasticity. This generalization introduces a two-fold challenge in model construction or for the learning of either the flows of dynamical systems or their corresponding Hamiltonian functions, namely, preserving simlutaneously the manifold  and the variational structures. When a model is designed to predict Hamiltonian vector fields on a manifold, ensuring that the predicted flow remains on the manifold and preserves symplectic or Poisson structures necessitates structure-preserving integrators. However, even in Euclidean spaces, the design of variational integrators is complex and often requires implicit methods, indicating a heightened difficulty in the manifold setting. Additionally, standard methods such as Galerkin projections are typically approximately but not exactly manifold-preserving \cite{VBorovitskiy2020} and typically fail to preserve the variational structure. In \cite{maggioni2021learninginteractionkernelsagent}, an algorithm is proposed to identify interacting potentials on a Riemannian manifold, though the estimated potential remains dependent only on the Euclidean scalar distance. This approach requires a prior selection of a finite set of basis functions and coordinate charts or embeddings. In contrast, {\it our work introduces a kernel method that globally estimates a Hamiltonian function for Poisson systems on the phase-space manifold} and yields a {\it globally well-defined and chart-independent solution}. This sets it apart from existing approaches in the literature \cite{feng2021learning, maggioni2021learninginteractionkernelsagent, FeiLu2019}.

\paragraph{Main Results.}
This paper proposes a method to learn a Hamiltonian function $H: P\rightarrow \mathbb{R}$ of a globally defined Hamiltonian system on a $d$-dimensional Poisson manifold $P$ from noisy realizations of the corresponding Hamiltonian vector field $X _H \in \mathfrak{X}(P)$, that is:
\begin{align*}
\mathbf{X}_{\sigma^2}^{(n)}:= X_H(\mathbf{Z}^{(n)})+\bm{\varepsilon}^{(n)}, \quad n=1,\ldots,N,
\end{align*}
where $\mathbf{Z}^{(n)}$ are IID random variables with values on the Poisson manifold $P$ and with the same distribution $\mu_{\mathbf{Z}}$, and $\bm\varepsilon^{(n)}$ are independent random variables on the tangent spaces $T_{\mathbf{z}^{(n)}}P$ ($\mathbf{z}^{(n)}) $ is a particular realization of $\mathbf{Z}^{(n)}$ with mean zero and variance $\sigma^2I_d$. In this setup, we consider the inverse problem consisting in recovering the Hamiltonian function $H$, or a dynamically equivalent version of it, by solving the following optimization problem, 
\begin{align}\label{str-min}
 \widehat{h}_{\lambda,N}:=\mathop{\arg\min}\limits_{h\in\mathcal{H}_K} \frac{1}{N}\sum_{n=1}^{N} \left\|X_h(\mathbf{Z}^{(n)})-\mathbf{X}^{(n)}_{\sigma^2}\right\|_g^2 + \lambda\|h\|_{\mathcal{H}_K}^2,
\end{align}
where $X_h$ is the Hamiltonian vector field corresponding to the function $h \in {\mathcal H} _K$, ${\mathcal H} _K $ is the reproducing kernel Hilbert space (RKHS) associated to a kernel function $K:P \times P \rightarrow \mathbb{R} $ that will be introduced later on, $g  $ is a Riemannian metric on $P$, and $\lambda\geq0$ is a Tikhonov regularization parameter. The solution $\widehat{h}_{\lambda,N}$ in the RKHS is sought to minimize the natural regularized empirical risk defined in this setup.
We shall call the solution $\widehat{h}_{\lambda,N}$ of the optimization problem  \eqref{str-min} the {\bfi structure-preserving kernel estimator} of the Hamiltonian function.

We now summarize the outline and the main contributions of the paper.

\begin{enumerate}[leftmargin=*]
\item   In Section \ref{pre-lim}, we review basic concepts in Riemannian geometry, including differentials on Riemannian manifolds and the definitions of random variables and integration in this context. Additionally, we provide the necessary background on Poisson mechanics, covering topics such as Hamiltonian vector fields, Casimir functions, compatible structures, and Lie-Poisson systems, illustrated with two real-world examples: rigid-body dynamics and underwater vehicle dynamics. 

\item In Section \ref{Structure-preserving kernel regression on Poisson manifolds}, we extend the differentiable reproducing property, previously established for the compact \cite{zhou2008derivative} and Euclidean cases \cite{hu2024structure} to Riemannian manifolds. Specifically, we show that the differential of a function $f\in\mathcal{H}_K$ in the reproducing kernel Hilbert space (RKHS) $\mathcal{H}_K$ associated to a kernel function  $K:P \times P \rightarrow \mathbb{R}$, when paired with a vector $v\in T_xP$, can be represented as the RKHS inner product of $f\in \mathcal{H}_K$ with the differential of the kernel section of $K$ evaluated along $v\in T_xP$. This property is pivotal throughout our analysis. Following this, we derive the solution of the optimization problem \eqref{str-min} using operator representations and ultimately express it as a closed-form formula made of a linear combination of kernel sections, similar to the standard Representer theorem. Importantly, this formula provides a globally defined expression on the Poisson manifold $P$, independent of the choice of local coordinates, though we also present the formula in local charts. In Subsection \ref{conservation}, we study the case in which the Hamiltonian that needs to be learned is known to be {\it a priori} invariant with respect to the canonical action of a given Lie group and show that if the kernel is chosen to be invariant under that group action, then the flow of the estimated Hamiltonian inherently conserves momentum maps, as a direct consequence of Noether's theorem. 

\item In Section \ref{Estimation and approximation error analysis}, we utilize the operator representation of the estimator to derive bounds for estimation and approximation errors. These bounds demonstrate that, as the number of random samples increases, the estimator converges to a true Hamiltonian function with high probability.

\item   In Section \ref{numerical experiments}, we implement the structure-preserving kernel ridge regression in {\it Python} and conduct extensive numerical experiments. These experiments focus on two primary scenarios: Hamiltonian systems on symplectic manifolds (Subsection \ref{Hamiltonian systems on symplectic manifolds}) and Lie-Poisson systems (Subsection \ref{Applications to Lie-Poisson systems}). In subsection \ref{Hamiltonian systems on symplectic manifolds}, we aim to recover the Hamiltonian function for the classical two-vortex system on products of $2$-spheres, which exhibits singularities, as well as another well-behaved Hamiltonian function represented as a 3-norm defined on the same manifold. In Subsection \ref{Applications to Lie-Poisson systems}, we address symmetric Poisson systems on cotangent bundles of Lie groups. Through Lie-Poisson reduction, these systems can be modeled as Lie-Poisson systems on the dual of a Lie algebra. Numerical simulations validate the effectiveness, ease of training, and data efficiency of our proposed kernel-based approach.

\end{enumerate}

We emphasize that, although this paper focuses on recovering  Hamiltonian functions from observed Hamiltonian vector fields, the proposed framework is more broadly applicable. Specifically, it can be extended to any scenario where the observed quantity (in this paper $J\nabla H$) is the image by a vector bundle map of the differential of a function (in this paper $H$) on the manifold. This makes the framework versatile for various systems beyond Hamiltonian dynamics; for example, in gradient systems with an underlying control-dependent potential function, the same algorithm can be applied to learn an optimal control force based on sensor-measured external forces.

\section{Preliminaries on Poisson mechanics}\label{pre-lim}

\subsection{Hamiltonian systems on Poisson manifolds}
The aim of this paper is to learn Hamiltonian systems on Poisson manifolds. In this subsection, we will provide a brief introduction to Poisson mechanics relevant to the learning problems; for more details, see \cite{Marsden1994,Ortega2004, libermann2012symplectic, vaisman2012lectures}. Let $P$ be a manifold and denote by  $C^\infty(P)$ the space of all smooth functions defined on it.

\begin{definition}
\label{poisson-manifold}
A {\bfi Poisson bracket} (or a {\bfi Poisson structure}) on a manifold $P$ is a bilinear map $\{\cdot , \cdot \}: C^{\infty}(P)\times C^{\infty}(P)\to C^{\infty}(P)$ such that:
\begin{description}
\item [(i)]  $\left( C^\infty(P),\{\cdot , \cdot \}\right)$ is a Lie algebra.
\item [(ii)] $\{\cdot , \cdot \}$ is a derivation in each factor, that is,
$$
\{F G, H\}=\{F, H\} G+F\{G, H\},
$$
for all $F, G$, and $H \in  C^\infty(P)$.
\end{description}
A manifold $P$ endowed with a Poisson bracket $\{\cdot , \cdot \}$ on $ C^\infty(P)$ is called a {\bfi Poisson manifold}.    
\end{definition}

\begin{example}[{\bf Symplectic bracket}]
\label{symplectic-bracket} \normalfont
Any symplectic manifold $(P, \omega)$ is a Poisson manifold with the Poisson bracket defined by the symplectic form $\omega$ as follows:
\begin{align}\label{Symplectic-Bracket}
\{F,G\}(z)= \omega(X_F(z),X_G(z)), \quad \mbox{$F, G \in C^{\infty}(P)$,
}
\end{align}
where $X_F,X_G$ are the Hamiltonian vector fields of $F, G \in C^{\infty}(P)$, respectively,  uniquely determined by the equalities  $\mathbf{i}_{X_F} \omega= \mathbf{d}F $ and $\mathbf{i}_{X_G} \omega= \mathbf{d}G $ ($\mathbf{i}  $ denotes the interior product and $\mathbf{d}  $ the exterior derivative).
\end{example}

\begin{example}[{\bf Lie-Poisson bracket}]
\label{Lie-Poisson bracket}
\normalfont
If $\mathfrak{g}$ is a Lie algebra, then its dual algebra $\mathfrak{g}^*$ is a Poisson manifold with respect to the Lie-Poisson brackets $\{\cdot , \cdot \}_{+}$ and $\{\cdot , \cdot \}_{-}$ defined by
\begin{align}\label{lie-poisson-bracket}
\{F, G\}_{ \pm}(\mu)= \pm\left\langle\mu,\left[\frac{\delta F}{\delta \mu}, \frac{\delta G}{\delta \mu}\right]\right\rangle,    \quad \mbox{$F,G \in C^{\infty}(\mathfrak{g}^\ast)$,}
\end{align}
where $[\cdot,\cdot]$ is the Lie bracket on $\mathfrak g$, $\langle \cdot , \cdot \rangle$ denotes the pairing between $\mathfrak g^*$ and $\mathfrak{g}$, and $\frac{\delta F}{\delta \mu}\in\mathfrak{g}$ is the {\bfi functional derivative} of $F$ at $\mu\in \mathfrak{g}^*$ defined as the unique element that satisfies
\begin{align}
\label{functional derivative}
\lim_{\varepsilon\to0}\frac{1}{\varepsilon} \left[F(\mu+\varepsilon\delta\mu)-F(\mu)\right]=\left\langle\delta\mu,\frac{\delta F}{\delta\mu}\right\rangle\quad \mbox{for any $\delta\mu \in \mathfrak{g}^{\ast}$}. 
\end{align}
\end{example}

By the derivation property of the Poisson bracket, the value of the bracket $\{F, G\}$ at $z\in P$ (and thus $X_F (z)$ as well) depends on $F$ only through the differential $\mathbf{d}F(z)$. Thus, a  Poisson tensor associated with the Poisson bracket can be defined as follows.
\begin{definition}
Let $(P, \left\{\cdot , \cdot \right\}) $ be a Poisson manifold. The {\bfi Poisson tensor} of this Poisson bracket is the contravariant anti-symmetric two-tensor
$$
B: T^* P \times T^* P \rightarrow \mathbb{R},
$$
defined by
\begin{align}\label{poisson-tensor}
B(z)\left(\alpha_z, \beta_z\right)=\{F, G\}(z),    \quad \mbox{for any $\alpha _z, \beta _z \in T_z ^\ast P$, $z \in P $},
\end{align}
and where $F, G \in C^{\infty} (P) $ are any two functions such that  $\mathbf{d} F(z)=\alpha_z$ and $\mathbf{d} G(z)=\beta_z \in T_z^* P$. 
\end{definition}

\noindent{\bf Hamiltonian vector fields and Hamilton's equations.}\quad Given a Poisson manifold $(P, \left\{\cdot , \cdot \right\})$ and $H\in C^\infty(P)$, there is a unique vector field $X_H \in \mathfrak{X} (P)$ such that 
\begin{align}\label{hamiltonian-vector-field}
X_H[F]=\{F,H\}, \quad \mbox{for all $F\in  C^\infty(P)$}.
\end{align}
The unique vector field $X_H$ is called the {\bfi Hamiltonian vector field} associated with the Hamiltonian function $H$. Let $F _t:P \rightarrow P$ be the flow of the Hamiltonian vector field $X_H$; then, for any $G\in C^\infty(P)$, 
\begin{align}\label{hamilton-equation}
   \frac{d}{dt}(G\circ F_t)=\{G,H\}\circ F _t=\{G\circ F _t,H\},\quad \text{or} \quad \dot{G}=\{G,H\}.
\end{align}
We call \eqref{hamilton-equation} the {\bfi Hamilton equations} associated with the Hamiltonian function $H$.
Let $B^{\sharp}:T^*P\to TP$ be the vector bundle map associated to the Poisson tensor $B$ defined in \eqref{poisson-tensor}, that is,
\begin{align}\label{bundle-map}
B(z)\left(\alpha_z, \beta_z\right)= \alpha_z\cdot B^{\sharp}(z)(\beta_z).
\end{align}
Then the Hamiltonian vector field defined in \eqref{hamiltonian-vector-field} can be expressed as $X_H=B^{\sharp}(\bm{\mathrm{d}}H)$. Furthermore, the Poisson bracket is given by
\begin{align*}
\{F,H\}(z)=B(z)(\mathbf{d} F(z),\mathbf{d} H(z))=\mathbf{d} F(z)\cdot B^{\sharp}(z)(\mathbf{d} H(z)).    
\end{align*}

Poisson geometry is closely related to symplectic geometry: for instance, every Poisson bracket determines a (generalized) foliation of a Poisson manifold into symplectic (immersed) submanifolds such that the leaf that passes through a given point has the dimension of the rank of the Poisson tensor at that point. We recall that the non-degeneracy hypothesis on symplectic forms implies that the Poisson tensor, in that case, has constant maximal rank. The following paragraph describes a distinctive feature of Poisson manifolds with respect to the symplectic case.

\noindent{\bf Casimir functions.} \quad As we just explained, an important difference between the symplectic and Poisson cases is that the Poisson tensor could be degenerate, which leads to the emergence of Casimir functions. 
A function $C\in C^\infty(P)$ is called a {\bfi Casimir function} if $\{C, F\}=0$ for all $F \in  C^\infty(P)$, that is, $C$ is constant along the flow of all Hamiltonian vector fields, or, equivalently, $X_C=0$, that is, $C$ generates trivial dynamics. Casimir functions are conserved quantities along the equations of motion associated with any Hamiltonian function. The Casimirs of a Poisson manifold are the {\bfi center} $\mathcal{C}(P)$ of the Poisson algebra $(C^{\infty}(P), \left\{\cdot , \cdot \right\})$ which is defined by
\begin{align*}
\mathcal{C}(P) = \{C\in C^\infty(P)\mid \{C,F\}=0, \text{ for all } F\in C^\infty(P)\}.  
\end{align*}
If the Poisson tensor is non-degenerate (symplectic case), then $\mathcal{C}(P)$ consists only of all constant functions on $P$. 
The following Lie-Poisson systems are two examples that admit non-constant Casimirs. 

\begin{example}[{\bf Rigid body Casimirs} {\cite{Marsden1994}}] \normalfont \label{rigid_body_example}
The motion of a rigid body follows a Lie-Poisson equation on the dual of the Lie algebra $\mathfrak{so}(3)^*$ of the special orthogonal group $SO(3) $ determined by the Hamiltonian function   
\begin{align*}
H(\Pi)=\frac{1}{2}\Pi^\top\mathbb{I}^{-1}\Pi,
\end{align*}
where $\mathbb{I}=\operatorname{diag}\{I_1,I_2,I_3\}$ is a diagonal matrix. The Poisson bracket on $\mathfrak{so}(3)^*\simeq \mathbb{R}^3$ is given by
\begin{align}
\label{liepoissonso3}
\{F,K\}(\Pi)=-\Pi\cdot (\nabla F\times\nabla K).    
\end{align}
Hence, it can be easily checked that $C(\Pi)=\|\Pi\|^2$ is a Casimir function of Poisson structure \eqref{liepoissonso3}. Furthermore, for any analytic function $\Phi:\mathbb{R}\to\mathbb{R}$, the function $\Phi\circ C$ is also a Casimir.
\end{example}

\begin{example}[{\bf Underwater vehicle Casimirs} \cite{leonard1997stability}] \label{Underwater vehicle}
\normalfont
The underwater vehicle dynamics can be described using the Lie-Poisson structure on $\mathfrak{so}(3)^*\times {\mathbb{R}^{3}}^*\times {\mathbb{R}^{3}}^*$ and the Hamiltonian function 
\begin{align*}
H(\Pi, \mathrm{Q}, \Gamma)=\frac{1}{2}\left(\Pi^{\top} A \Pi+2 \Pi^{\top} B^{\top} \mathrm{Q}+\mathrm{Q}^{\top} C \mathrm{Q}-2 m g\left(\Gamma \cdot r_{\mathrm{G}}\right)\right),   
\end{align*}
where $A,B,C$ are matrices coming from first principles and physical laws, $m$ is the mass of the body, $g$ is the gravity constant, and $r_G$ is the vector that links the center of buoyancy to the center of gravity.
For the underwater vehicle, $\Pi$ and $\mathrm{Q}$ correspond, respectively, to angular and linear components of the impulse of the system (roughly, the momentum of the system less a term at infinity). The vector $\Gamma$ describes the direction of gravity in body-fixed coordinates. The Poisson bracket on $\mathfrak{so}(3)^*\times {\mathbb{R}^{3}}^*\times {\mathbb{R}^{3}}^*$ is
$$
\{F, K\}(\Pi, \mathrm{Q}, \Gamma)=\nabla F^{\top} \Lambda(\Pi, \mathrm{Q}, \Gamma) \nabla K,
$$
where $F$ and $K$ are smooth functions on $\mathfrak{so}(3)^*\times {\mathbb{R}^{3}}^*\times {\mathbb{R}^{3}}^*$  and the Poisson tensor $\Lambda$ is given by
$$
\Lambda(\Pi, \mathrm{Q}, \Gamma)=\left(\begin{array}{ccc}
\hat{\Pi} & \hat{\mathrm{Q}} & \hat{\Gamma} \\
\hat{\mathrm{Q}} & 0 & 0 \\
\hat{\Gamma} & 0 & 0
\end{array}\right) .
$$
In this equation, the operation ${ }^{\wedge}: \mathbb{R}^3 \rightarrow \mathfrak{so}(3)$ is the standard isomorphism of $\mathbb{R}^3$ with the Lie algebra of the rotation group $SO(3)$ and is defined by $\hat{v} w=v \times w$ for $v, w \in \mathbb{R}^3$. There are six Casimir functions for the underwater vehicle dynamics
\begin{align*}
&C_1(\Pi, \mathrm{Q}, \Gamma)=\mathrm{Q} \cdot \Gamma, \quad C_2(\Pi, \mathrm{Q}, \Gamma)=\|\mathrm{Q}\|^2, \quad C_3(\Pi, \mathrm{Q}, \Gamma)=\|\Gamma\|^2 ,\\
&C_4(\Pi, \mathrm{Q}, \Gamma)=\Pi\cdot\mathrm{Q},\quad C_5(\Pi, \mathrm{Q}, \Gamma)=\Pi \cdot \Gamma, \quad C_6(\Pi, \mathrm{Q}, \Gamma)=\|\Pi\|^2.
\end{align*}
It can be easily checked that for any analytic function $\Phi:\mathbb{R}^6\to\mathbb{R}$, the function $\Phi\circ (C_1,\ldots,C_6)$ is also a Casimir.
\end{example}

\noindent{\bf The momentum map and conservation laws.} \quad Symmetries are particularly important in mechanics since they are, on many occasions, associated with the appearance of conservation laws. This fact is the celebrated {\bfi Noether's Theorem} \cite{NoetherTheorem}. A modern and elegant way to encode this fact uses the {\bfi momentum map}, an object introduced in \cite{lie1890, souriau1965geometrie, souriau, kostant66, souriau1966}. The definition of the momentum map requires a canonical Lie group action, that is, an action that respects the Poisson bracket. Its existence is guaranteed when the infinitesimal
generators of this action are Hamiltonian vector fields. In other words, if the Lie
algebra 
$\mathfrak{g}$ of the group $G$ that acts canonically  on the Poisson manifold $(P, \{ \cdot , \cdot \}) $
then, for each 
$\xi\in\mathfrak{g}$ with associated infinitesimal generator vector field $\xi_P \in \mathfrak{X}(P)$, we require the existence of a globally
defined function
$\mathbf{J}^\xi \in C^{\infty}(P)$, such that
\[\xi_M=X_{\mathbf{J}^\xi}.\]
\begin{definition}
Let $\mathfrak{g}$ be the Lie algebra of a Lie group $G$ acting canonically on the Poisson 
manifold $(P,\,\{\cdot,\cdot\})$. Suppose that for any $\xi\in\mathfrak{g}$,
the vector field $\xi_P$ is globally Hamiltonian, with
Hamiltonian function $\mathbf{J}^\xi \in C^{\infty}(P)$. The map
$\mathbf{J}:P\rightarrow\mathfrak{g}^\ast $ defined by the relation
\[\langle\mathbf{J}(z),\,\xi\rangle=\mathbf{J}^\xi (z),\]
for all $\xi\in\mathfrak{g}$ and $z\in M$, is called a 
{\bfi  momentum map\/} 
of the $G$--action. 
\end{definition}

Notice that  momentum maps are not uniquely determined; indeed, $\mathbf{J}_1$ and
$\mathbf{J}_2$ are momentum maps for the same canonical action if and only if for any $\xi\in\mathfrak{g}$, $\mathbf{J}_1^\xi -\mathbf{J}_2^\xi \in\mathcal{C}(P)$.
Obviously, if $P$ is symplectic and connected, then $\mathbf{J}$ is 
determined up to a constant in $\mathfrak{g}^\ast $. 

Noether's theorem is formulated in terms of the momentum map by saying that the fibers of the momentum map are preserved by the flow of the Hamiltonian vector field associated with any $G$-invariant Hamiltonian function.

\begin{example}[{\bf The linear momentum}]
Take the phase space of the $N$--particle
system, that is, $T^\ast  \mathbb{R}^{3 N}$. The
additive group $\mathbb{R}^3$ acts on it by spatial translation on
each factor: $\boldsymbol{v}\cdot
(\boldsymbol{q}_i,\,\boldsymbol{p}^i)=(\boldsymbol{q}_i+
\boldsymbol{v},\,\boldsymbol{p}^i)$,
with $i=1,\ldots,\,N$. This action is canonical and has an
associated momentum map that coincides with the
classical {\bfi linear momentum\/}
\index{momentum!linear}%
\index{linear!momentum}%
\[\begin{array}{cccc}
\mathbf{J}:&T^\ast  \mathbb{R}^{3 N}&\longrightarrow&
{\rm Lie}(\mathbb{R}^3)\simeq \mathbb{R}^3\\
	&(\boldsymbol{q}_i,\,\boldsymbol{p}^i)
&\longmapsto&\sum_{i=1}^{N}\boldsymbol{p}_i. 
\end{array}\]
\end{example}

\begin{example}[{\bf The angular momentum}]
Let ${\rm SO}(3)$ act on $\mathbb{R}^3$ and then, by lift, on
$T^\ast \mathbb{R}^3$, that is,
$A\cdot(\boldsymbol{q},\,\boldsymbol{p})=(A\boldsymbol{q},\,
A\boldsymbol{p})$.
This action is canonical and has an associated momentum map
\[\begin{array}{cccc}
\mathbf{J}:&T^\ast  \mathbb{R}^{3}&\longrightarrow&\mathfrak{so(3)}^\ast
\simeq \mathbb{R}^3\\
	&(\boldsymbol{q},\,\boldsymbol{p})&\longmapsto
&\boldsymbol{q}\times\boldsymbol{p}.
\end{array}\]
which is the classical {\bfi angular momentum\/}. 
\end{example}

\begin{example}[{\bf Lifted actions on cotangent bundles}]
\normalfont
The previous two examples are particular cases of the following situation. Let $G$ be a Lie group acting on the manifold $Q$ and then by lift on its cotangent bundle $T^\ast  Q$. Any such lifted action is canonical with respect to the canonical symplectic form on $T^\ast  Q$ and has an associated momentum map $\mathbf{J}:T^\ast  Q\rightarrow\mathfrak{g}^\ast $ given by
\begin{equation}
\label{lifted actions on cotangent bundles momentum map}
\langle\mathbf{J}(\alpha_q),\,\xi\rangle=\langle\alpha_q,\,\xi_Q(q)\rangle,
\end{equation}
for any $\alpha_q\in T^\ast  Q$ and any $\xi\in \mathfrak{g}$. 
\end{example}

\subsection{High-order differentials and random variables on Riemannian manifolds} 

We start by reviewing higher-order differentials and the spaces of bounded differentiable functions on general manifolds, which will allow us to examine the regularity of functions in a reproducing kernel Hilbert space (RKHS) later on in Section \ref{RKHS on Riemannian manifolds}. Finally, we introduce some basic facts about statistics on Riemannian manifolds that will be needed later on. For further details, see \cite{pennec2019riemannian, hsu2002stochastic, emery2012stochastic} and references therein.

Let $M$ be a smooth manifold with an atlas $\{U_\beta,\varphi_\beta,\beta\in\mathcal{I}\}$, where $\mathcal{I}$ is an index set. A function $f:M\to \mathbb{R}$ is said to be in the $C^k$ class if for any chart $\varphi_\beta, \beta\in\mathcal{I}$, the function $f\circ\varphi_{\beta}:\varphi_{\beta}(U_\beta)\to \mathbb{R}$ is a $C^k$ function and the $k$-order derivative of $f\circ\varphi_{\beta}$ is continuous.

\begin{definition}[{\bf Differentials}]\label{differetial}
If $f:M\to\mathbb{R}$ is in the $C^1$ class, we define its {\bfi differential} $Df:M\to T^*M$ of $f$ as
\begin{align*}
Df(x)\cdot v:=\left.\frac{d}{dt}\right|_{t=0}f(c(t)),\quad \mbox{for all $x\in M, v\in T_xM$},  
\end{align*}
where $c:[-1,1]\to M$ is a differentiable curve with $c(0)=x$ and $c'(0)=v$.  
\end{definition}
In the sequel, we shall also denote $\mathbf{d}f$ as the differential of $f$. The differential $Df$ can be identified with a function on $TM$. Then, if $f$ is regular enough, we can continue taking the differential of $Df$ on $TM$. In this way, we can define high-order differentials as follows. 

\begin{definition}[{\bf High-order differentials}]
If $f:M\to\mathbb{R}$ is in $C^k$ class, we define the {\bfi $k$-order differential} of $f$ denoted as $D^kf:T^kM\to \mathbb{R}$ inductively to be the differential of $D^{k-1}f:T^{k-1}M\to \mathbb{R}$.  
\end{definition}
\begin{remark}
\normalfont
For a differentiable function $K:M\times M\to \mathbb{R}$, we denote $D^{(m,l)}K:T^mM\times T^lM\to \mathbb{R}$ as the $m$-order differential with respect to the first variable and $l$-order differential with respect to the second variable.  
\end{remark}

\begin{remark}[{\bf Functions with bounded differentials}]
\label{Functions with bounded differentials}
\normalfont
Having defined high-order differentials, we can now define the spaces $C_b^s(M)$ of {\bfi functions with bounded differentials}  for all positive integers $s$. We assume first that $M$ is a second-countable manifold, and we hence can construct a complete Riemannian metric $g$ on it \cite{nomizu1961existence}. A general result \cite[Proposition 12.3]{tu2008manifolds} guarantees that the higher order tangent $T ^kM $ bundles are also second countable and can hence also be endowed with complete metrics $g _k$
Let now $C^s_b(M)$ be the subset of $C^s(M)$ given by
\begin{align*}
C_b^s(M):=\left\{f\in C^s(M)~\mid ~\|f\|_{C_b^s}:=\|f\|_{\infty}+\sum_{k=1}^s\|D^kf\|_{\infty}<\infty\right\},    
\end{align*}
where $\|f\|_{\infty}:=\sup_{x\in M}|f(x)|$ and
\begin{align*}
\|D^kf\|_{\infty}:=\sup_{y\in T^{k-1}M}\  \sup_{v\in T_y(T^{k-1}M), v \neq 0}\frac{\left|D^kf(y)\cdot v\right|}{\|v\|_{k-1}}.  
\end{align*}
Here, $\|v\|_{k-1}$ stands for the norm of $v$ in the tangent space $T_{y}T^{k-1}M$ that is defined using the complete Riemannian metric $g_{k-1}$ on $T^{k-1}M$. 
\end{remark}

The metric $g=g _0$ is also used to define the {\bfi gradient} $\nabla f\in \mathfrak{X}(M)$ of a function $f:M\to\mathbb{R}$ which is the vector field defined by its differential as
\begin{align*}
g(x)(\nabla f(x), v)=Df(x)\cdot v, \quad \text{for\ all\ } v\in T_xM.   
\end{align*}
For a differentiable function $f:M\to\mathbb{R}$, a Fundamental Theorem of Calculus can be formulated as
\begin{equation}\label{fun-the}
\begin{aligned}
f(x)-f(y)&=\int_a^b\frac{d}{dt} f(\gamma(t))\mathrm{d}t= \int_a^b g(\gamma(t))\left(\nabla f(\gamma(t)),\gamma'(t)\right)\mathrm{d}t=\int_a^bDf(\gamma(t))\cdot\gamma'(t)\mathrm{d}t,   
\end{aligned}
\end{equation}
where $\gamma:[a,b]\to M$ is a differentiable curve with $\gamma(a)=x$ and $\gamma(b)=y$.
Taking absolute values and using the Cauchy-Schwarz inequality, we obtain
\begin{align*}
|f(x)-f(y)| \leq \|Df\|_{\infty}\int_a^b\|\gamma'(t)\|_g\mathrm{d}t=\|Df\|_{\infty}\ \text{length}(\gamma),    
\end{align*}
for each $f\in C^1_b(M)$. Note that, if we choose $\gamma$ to be the geodesic connecting $x$ and $y$, then $\text{length}(\gamma)$ is the geodesic distance of $x$ and $y$.

\noindent{\bf Random variables on Riemannian manifolds.}\quad
Let $(M,g)$ be a smooth, connected, and complete $d$-dimensional Riemannian manifold with Riemannian metric $g$. Existence and uniqueness theorems from the theory of ordinary differential equations guarantee that there exists a unique geodesic integral curve $\gamma_{x,v}$ going through the point $x\in M$ at time $t=0$ with tangent vector $v\in T_xM$.
The Riemannian {\bfi exponential map} at $x\in M$ denoted by $\operatorname{exp}_{x}$ maps each vector $v\in T_xM$ to the point of the manifold reached in a unit time, that is, 
\begin{align*}
\operatorname{exp}_{x}(v):=\gamma_{x,v}(1),   
\end{align*}
where $\gamma_{x,v}$ is the geodesic starting at $x$ along the vector $v$. Let $t_0(x,v)>0$ be the first time when the geodesic $t\mapsto \operatorname{exp}_x(tv)$ stops to be length-minimizing. The point $\operatorname{exp}_x(t_0(x,v)v)$ is called a {\bfi cut point} and the corresponding tangent vector $t_0(x,v)v$ is called a {\bfi tangent cut point}. The set of all cut points of all geodesics starting from $x\in M$ is the {\bfi cut locus} $C(x)\in  M$ and the set of corresponding vectors the {\bfi tangential cut locus} $\mathcal{C}(x)\in T_xM$. Thus, we have $C(x) = \operatorname{exp}_x(\mathcal{C}(x))$. Define the maximal definition domain for the exponential chart as
\begin{align*}
\mathcal{D}(x):=\left\{tv\in T_xM\mid v\in S^{d-1}, 0\leq t<t_0(x,v)\right\},   
\end{align*}
where $S^{d-1}$ is the unit sphere in the tangent space with respect to the metric $g(x)$. Then the exponential map $\operatorname{exp}_x$ is a diffeomorphism from $\mathcal{D}(x)$ on its image $\operatorname{exp}_x(\mathcal{D}(x))$.
Denote by $\operatorname{log}_x$ the inverse of the exponential map. The Riemannian distance $d(x,y)=\|\operatorname{log}_x(y)\|_{g(x)}$.
Recall that for connected complete Riemannian manifolds we have that
\begin{align*}
M= \operatorname{exp}_x(\mathcal{D}(x)) \cup \mathcal{C}(x).
\end{align*}
It is easy to check that $\mathcal{D}(x)$ is an open star-shaped subset of $T_{x} M$, so $\left(\mathcal{D}(x), \exp _{x}\right)$ is a smooth chart and one has that for any Borel measurable function $f:M \rightarrow \mathbb{R}$ on $M$
$$
\int_{M} f d V_{M}=\int_{\mathcal{D}(x)}\left(f \circ \exp _{x}\right) \sqrt{\operatorname{det}\left(G_{x}\right)} d \lambda^d,
$$
where $d V_{M}$ is the Riemannian measure on the manifold $(M, g), G_{x}:=\exp _{x}^* g$ is the pullback of $g$ by $\exp _{x}$, and $\lambda^d$ denotes the $d$-dimensional Lebesgue measure on $\mathbb{R}^d$.

\begin{definition}[{\bf Random variables on Riemannian manifold}] Let $(\Omega,\mathcal{F},\mathbb{P})$ be a probability space and $(M,g)$ be a Riemannian manifold. A {\bfi random variable} $\xi$ on the Riemannian manifold $M$ is a Borel measurable function from $\Omega$ to $M$. 
\end{definition}
Let $\mathcal{B}(M)$ be the Borel $\sigma$-algebra of $M$. The random variable $\xi$ on the Riemannian manifold $(M,g)$ has a probability density function $p:M \rightarrow \mathbb{R}$ (real, positive and integrable function) if for all $A \in \mathcal{B}(M)$,
$$
\mathbb{P}(X \in A)=\int_{A} p(x) dV_M(x) \quad \text { and } \quad \mathbb{P}(M)=\int_{M} p(x) dV_M(x)=1.
$$
The definitions of the mean and variance of a random variable on a Riemannian manifold can be found in the literature \cite{pennec2006intrinsic}.

\begin{remark}[{\bf Random variables on tangent spaces}]\label{Random variables on tangent spaces}
\normalfont Since the tangent space of a manifold at any given point is a vector space, the definition of a random variable on them is the same as for vector spaces. Let $\eta$ be a random variable on the tangent space $T_xM$ with density $p:T _xM \rightarrow \mathbb{R}$, that is, $\eta$ is a Borel
measurable function from $\Omega$ to $T_xM$. 

\noindent {\bf (i) (Mean and variance)}. The mean and variance of a random variable $V$ on $T _xM $ are defined as
\begin{align*}
\mathbb{E}[\eta]:&= \int_{T_xM} vp(v)d\lambda^d(v) \in T_xM,\\
\operatorname{Var}[\eta]:&=\mathbb{E}\left[\|\eta-\mathbb{E}[\eta]\|_g^2\right]=\int_{T_xM}\left\|v-\mathbb{E}[\eta]\right\|_g^2p(v)d\lambda^d(v),
\end{align*}
where $\lambda^d$ is the $d$-dimensional Lebesgue measure.

\noindent {\bf (ii) (Independence).} Let $\eta_x$ and $\eta_y$ are two random variables on the tangent spaces $T_xM$ and $T_yM$, respectively. Let  $\mathcal{B}(T_xM)$ and $\mathcal{B}(T_yM)$ be the Borel $\sigma$-algebras of $T_xM$ and $T_yM$, respectively. Then we say $\eta_x$ and $\eta_y$ are independent if for all $A_x\in \mathcal{B}(T_xM)$ and  $A_y\in \mathcal{B}(T_yM)$, we have that 
\begin{align*}
\mathbb{P}(\eta_x\in A_x, \eta_y\in A_y)= \mathbb{P}(\eta_x\in A_x)\mathbb{P} (\eta_y\in A_y). 
\end{align*}
\end{remark}

\subsection{Compatible structures and local coordinate representations}

This subsection introduces the concept of compatible structure, a vector bundle map from the tangent space of the Poisson manifold to itself that enables us to express Hamiltonian vector fields as images of gradients by a vector bundle map. Using this tool, we can formulate adapted differential reproducing properties in RKHS setups and establish an operator framework later in Section \ref{Structure-preserving kernel regression on Poisson manifolds} for them. Finally, we shall apply the Poisson-Darboux-Lie-Weinstein theorem to provide local coordinate representations of the compatible structure.

We first endow the Poisson manifold $(P, \left\{\cdot , \cdot \right\})$ with a Riemannian metric $g$ and define the vector bundle map $J:TP\to TP$  by
\begin{align}\label{Com-str}
J(z) v: =B^\sharp(z)\left(g^\flat(z)(v)\right), \quad \mbox{for all $z\in P, v\in T_zP$,}  
\end{align}
where $B^\sharp:T^*P\to TP$ is the vector bundle map defined in \eqref{bundle-map}
and $g^\flat:TP\to T^*P$ is the vector bundle isomorphism determined by the Riemannian metric $g$. Notice that the vector bundle map $B^\sharp$ may not be an isomorphism due to the possible degeneracy of the Poisson tensor $B$, but nevertheless, the map $J$ is always well-defined since we do not need to use the inverse of $B^\sharp$ at any moment. Note that the Poisson tensor $B$ and the Riemannian metric $g$ are linked by the following relation:
\begin{align}\label{com-str}
B(z)(\alpha_z,\beta_z)=\alpha_z\cdot B^\sharp(z)(\beta_z)=g(z)\left(g^\sharp(z)(\alpha_z),B^\sharp(z)(\beta_z)\right)
,\quad \mbox{for all  $ \alpha_z,\beta_z\in T^*_zP$.}    
\end{align}
We call $J:TP\to TP$ the {\bfi compatible structure} associated with the Poisson tensor $B$ and the Riemannian metric $g$. Using the compatible structure $J$, the Hamiltonian vector field associated with a Hamiltonian function $h$ can be expressed as 
\begin{align}\label{rep-ham-vector-field}
X_h=B^{\sharp}(\mathbf{d}h)=B^{\sharp}(g^{\flat}(\nabla h))=J\nabla h.    
\end{align}
\begin{remark}
\normalfont
If the Poisson manifold $P$ is symplectic with a symplectic form $\omega$, then $B^{\sharp}=\omega^\sharp$, and the relationship \eqref{com-str} reduces to 
\begin{align*}
g(z)(u,v)=\omega(z)(u,-Jv),\quad u,v\in T_zP,   
\end{align*}
where $J(z)v:=\omega^\sharp(z)(g^\flat(z)(v))$ is, in this case, an isomorphism from $TP$ to $TP$.
When the compatible structure $J$ defined in \eqref{com-str} satisfies $J^2=-I$, it is called a complex structure, and the corresponding manifold is said to be a K\"{a}hler manifold. Even-dimensional Euclidean vector spaces are special cases of K\"{a}hler manifolds by choosing $J$ as the canonical (Darboux) symplectic matrix $J_{\mathrm{can}}$, whose Hamiltonian vector field associated with a Hamiltonian function is expressed as $X_h=J_{\mathrm{can}}\nabla h$.
\end{remark}

Using the compatible structure $J$, we obtain a useful property of Casimir functions which will be needed later on in Section \ref{Structure-preserving kernel regression on Poisson manifolds} to prove the uniqueness of the solution to the learning problem.

\begin{proposition}\label{Casimir property}
Let $(P, \left\{\cdot , \cdot \right\})$ be a Poisson manifold equipped with a Riemannian manifold $g$. Then for any vector field $Y\in\mathfrak{X}(P)$ and any  $h \in C^{\infty}(P)$:
\begin{align}\label{change J}
g(z)(J(z)\nabla h(z), Y(z))=g(z)(\nabla h(z), -J(z)Y(z)),\quad \mbox{for all  $z\in P$}.
\end{align}
Furthermore, Casimir functions are constant along the flow of the vector field $-JY$, that is, $(-JY)[h] = 0$ for any vector field $Y\in\mathfrak{X}(P)$ and any $h\in\mathcal{C}(P)$.
\end{proposition}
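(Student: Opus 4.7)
The proof splits naturally into two short pieces, with the identity (\ref{change J}) doing all the work; the second assertion is then an immediate corollary of the first together with the characterisation of Casimirs.

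For the identity, the plan is to unpack the definition of $J$ and exploit the antisymmetry of the Poisson tensor $B$. I would start from the right-hand side and rewrite
\[
g(z)(\nabla h(z), -J(z)Y(z)) = -g(z)\bigl(\nabla h(z), B^\sharp(z)(g^\flat(z)(Y(z)))\bigr),
\]
and then apply relation (\ref{com-str}) with $\alpha_z = \mathbf{d}h(z)$ and $\beta_z = g^\flat(z)(Y(z))$. Since $g^\sharp(\mathbf{d}h) = \nabla h$, this rewrites the right-hand side as $-B(z)(\mathbf{d}h(z),g^\flat(z)(Y(z)))$. By antisymmetry of the Poisson tensor, this equals $B(z)(g^\flat(z)(Y(z)),\mathbf{d}h(z))$, which, by the first equality in (\ref{com-str}) applied with the roles of $\alpha$ and $\beta$ swapped, equals $g(z)(Y(z), B^\sharp(z)(\mathbf{d}h(z))) = g(z)(Y(z), J(z)\nabla h(z))$. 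A final appeal to the symmetry of $g$ produces $g(z)(J(z)\nabla h(z), Y(z))$, which is exactly the left-hand side of (\ref{change J}).

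For the second claim, I would simply combine (\ref{change J}) with the fact that a Casimir function $h \in \mathcal{C}(P)$ satisfies $X_h = 0$, equivalently (by (\ref{rep-ham-vector-field})) $J\nabla h \equiv 0$. Then, using the defining relation of the gradient together with (\ref{change J}),
\[
(-JY)[h](z) = \mathbf{d}h(z)\cdot(-J(z)Y(z)) = g(z)(\nabla h(z), -J(z)Y(z)) = g(z)(J(z)\nabla h(z), Y(z)) = 0,
\]
which is the desired conclusion.

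There is no real obstacle here; the only mild pitfall is careful bookkeeping of which factor of the tensor $B$ sits on which side when one invokes (\ref{com-str}), because the relation is written asymmetrically (with $B^\sharp$ applied to the second slot). One has to apply it once with $(\alpha,\beta) = (\mathbf{d}h, g^\flat Y)$ and once with the slots reversed, using the antisymmetry of $B$ between the two applications; provided this is done correctly, the identity falls out in a single line.
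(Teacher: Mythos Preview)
Your proof is correct and follows essentially the same route as the paper's: unpacking $J = B^\sharp \circ g^\flat$, invoking relation (\ref{com-str}) twice, and using the antisymmetry of $B$ together with the symmetry of $g$. The only cosmetic difference is that the paper starts from the left-hand side of (\ref{change J}) while you start from the right; the chain of equalities is identical in reverse, and the second assertion is deduced in both cases by combining (\ref{change J}) with $X_h = J\nabla h = 0$ for Casimirs.
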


\begin{proof}
By the definition of the compatible structure $J$ and the vector bundle map $B^\sharp$, we have that
\begin{align*}
g(z)(J(z)\nabla h(z),Y(z))&=g(z)\left(B^\sharp(z)\mathbf{d}h(z),Y(z)\right)=B(z)(g^\flat(z)(Y)(z),\mathbf{d}h(z))\\
&=-B(z)(\mathbf{d}h(z),g^\flat(z)(Y)(z)) =-\mathbf{d}h(z)\cdot B^\sharp(z)(g^\flat(z)(Y))(z)\\
&=-g(z)(\nabla h(z), B^\sharp(z)(g^\flat(z)(Y))(z))=g(z)(\nabla h(z), -J(z)Y(z)),
\end{align*}
where the second and fifth equalities are due to the equation \eqref{com-str} and the symmetry of the Riemannian metric $g$, and the third equality holds thanks to the anti-symmetry of the Poisson tensor $B$. Furthermore, let $h\in\mathcal{C}(P)$. Then equation \eqref{change J} implies that for all $ z\in P$,  
\begin{align*}
(-JY)[h](z)=\mathbf{d}h(z)\cdot (-JY)(z) = g(z)(\nabla h(z),-J(z)Y(z)) = g(z)(X_h(z), Y(z)) =0.
\end{align*}
\end{proof}

\noindent{\bf Local coordinate representations.} \quad  We start by recalling the local Lie-Darboux-Weinstein coordinates of Poisson manifolds (see the original paper \cite{weinstein1983local} or \cite{libermann2012symplectic, vaisman2012lectures} for details). 

\begin{theorem}[{\bf Lie-Darboux-Weinstein coordinates}] 
Let $(P, \{ \cdot , \cdot \})$ be a $m$--dimensional Poisson manifold and $z_0 \in M $
a point where the rank of the Poisson structure equals $2n $, $0 \leq 2 n \leq
m $. There exists a chart  $(U, \varphi)$ of $P$ whose domain contains the point $z_0$
and such that the associated local coordinates, denoted by $(q ^1,\ldots, q ^n, p _1,
\ldots, p _n,y _1, \ldots, y _{m-2n})$, satisfy
\[
\{q ^i, q ^j\}=\{ p _i, p _j\}=\{q ^i, y _k\}=\{p _i, y _k\}=0,\quad\text{ and } 
\quad\{q
^i, p _j\}= \delta^i_j,
\] 
for all $i,j,k $, $1 \leq i,j\leq n $, $1\leq k\leq m-2n $.  For all $k,l $, $1\leq k,l\leq m-2n $, the Poisson bracket $\{y _k, y _l\}
$ is a function of the local coordinates $y
^1, \ldots, y ^{m-2n} $ exclusively, and vanishes at $z _0$. Hence, the restriction of
the bracket $\{
\cdot , \cdot \} $ to the coordinates $y
^1, \ldots, y ^{m-2n} $ induces a Poisson structure that is usually referred to as the
{\bfi  transverse Poisson structure} 
of $(P, \{ \cdot , \cdot \})$ at $m$. This structure is unique up to isomorphisms.

If the rank of $(P, \{ \cdot , \cdot \})$ is constant and equal to $2n $ on a
neighborhood $W$ of $z _0$ then, by choosing the domain $U$ of the chart contained in
$W$, the coordinates $y$ satisfy
\[
\{y _k, y _l\}=0,
\]
for all $k,l $, $1\leq k,l\leq m-2n $.
\end{theorem}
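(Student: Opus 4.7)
My plan is to prove the theorem by induction on the rank $2n$ of the Poisson structure at $z_0$, following the classical Weinstein splitting strategy. The base case $n=0$ is trivial: the Hamiltonian vector fields all vanish at $z_0$, so any chart around $z_0$ furnishes the $y$-coordinates and the entire Poisson tensor restricts to give the ``transverse'' structure, which automatically vanishes at $z_0$. The inductive step is where the real work happens.

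For the inductive step, assuming $n\geq 1$, I would first pick a smooth function $p_1$ near $z_0$ whose Hamiltonian vector field $X_{p_1}$ does not vanish at $z_0$; this is possible precisely because the rank is $2n > 0$. By the standard flow-box (rectification) theorem, I obtain a local coordinate $q^1$ with $X_{p_1} = \partial/\partial q^1$, which forces $\{q^1,p_1\}=1$. A short computation using the Jacobi identity, phrased as $[X_{p_1}, X_{q^1}] = -X_{\{p_1, q^1\}} = -X_{-1} = 0$, shows that $X_{p_1}$ and $X_{q^1}$ commute. They are also linearly independent at $z_0$, since $\mathbf{d}p_1\cdot X_{q^1} = \{p_1,q^1\}=-1$ while $\mathbf{d}p_1\cdot X_{p_1}=0$. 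The simultaneous flow-box theorem then yields a chart in which $q^1,p_1$ are the first two coordinates and the transverse slice $S = \{q^1=p_1=0\}$ is a codimension-$2$ submanifold through $z_0$.

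Next, I would identify a Poisson structure on $S$ of rank $2(n-1)$ at $z_0$. The commuting flows of $X_{q^1}$ and $X_{p_1}$ locally define an $\mathbb R^2$-action whose invariant functions form a Poisson subalgebra (since $\{q^1,\cdot\}$ and $\{p_1,\cdot\}$ are derivations that commute with Poisson bracketing by the Jacobi identity); restricting such invariants to $S$ endows $S$ with a well-defined Poisson bracket, and one checks using the splitting $T_{z_0}P = T_{z_0}S \oplus \mathrm{span}\{X_{p_1}(z_0),X_{q^1}(z_0)\}$ that the rank drops by exactly two. Applying the inductive hypothesis to $(S,\{\cdot,\cdot\}_S)$ yields coordinates $(q^2,\dots,q^n,p_2,\dots,p_n,y_1,\dots,y_{m-2n})$ on a neighborhood of $z_0$ in $S$ with the required bracket relations. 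I then extend each of these functions off $S$ by declaring them constant along the commuting flows of $X_{q^1}$ and $X_{p_1}$; this is well-defined on a sufficiently small neighborhood.

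Finally, I would verify that the resulting $m$ functions form a chart with the required bracket relations. Invariance under the flow of $X_{p_1}$ (resp.\ $X_{q^1}$) translates to Poisson-commutation with $p_1$ (resp.\ $q^1$), giving $\{q^1,q^i\}=\{q^1,p_j\}=\{p_1,q^i\}=\{p_1,p_j\}=\{q^1,y_k\}=\{p_1,y_k\}=0$ for $i,j\geq 2$, while the bracket relations among $q^i,p_j,y_k$ with $i,j\geq 2$ are inherited from the induction hypothesis via the fact that the restriction of each extended function to $S$ recovers the original function on $S$ (and the $\mathbb R^2$-invariant extensions Poisson-bracket to $\mathbb R^2$-invariant functions). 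The Jacobi identity applied to $\{q^1,\{y_k,y_l\}\}=0$ and $\{p_1,\{y_k,y_l\}\}=0$ forces $\{y_k,y_l\}$ to depend only on the $y$ coordinates; its vanishing at $z_0$ comes from the rank-drop at the base of the induction. The constant-rank case follows immediately: if $\mathrm{rank}(B)\equiv 2n$ on $W$, then the transverse structure has rank zero identically on a neighborhood of $z_0$ in $S$, forcing $\{y_k,y_l\}\equiv 0$.

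The main obstacle I expect is the careful bookkeeping needed to justify that the extension of the inductive coordinates from $S$ to a neighborhood in $P$ is both well-defined and preserves the bracket relations; in particular, checking that $\mathbb R^2$-invariant extensions Poisson-bracket to $\mathbb R^2$-invariant functions whose restriction to $S$ equals the Poisson bracket computed there uses the full force of the Jacobi identity and the commutativity of the two flows, and is the step where a beginner's proof typically becomes imprecise.
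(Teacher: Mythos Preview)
The paper does not supply its own proof of this theorem: it is recalled as a known result, with the reader directed to Weinstein's original paper and to the textbooks of Libermann--Marle and Vaisman for details. There is therefore no ``paper's proof'' to compare against.

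Your outline is the classical Weinstein splitting argument and is essentially correct. Two small remarks. First, the sentence ``The Jacobi identity applied to $\{q^1,\{y_k,y_l\}\}=0$ and $\{p_1,\{y_k,y_l\}\}=0$ forces $\{y_k,y_l\}$ to depend only on the $y$ coordinates'' overstates what those two equalities buy you: they only show that $\{y_k,y_l\}$ is independent of $q^1$ and $p_1$. The dependence on the $y$'s alone comes from combining this $\mathbb{R}^2$-invariance with the inductive hypothesis on $S$, as you in fact describe just before; the sentence as written is slightly misleading. Second, you do not address the clause that the transverse Poisson structure is unique up to isomorphism. This requires a separate (short) argument: any two transversals to the symplectic leaf through $z_0$ are related, near $z_0$, by the flows of Hamiltonian vector fields tangent to the leaf, and such flows are Poisson diffeomorphisms, so the induced transverse structures are Poisson-isomorphic. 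It is worth at least flagging that this is a distinct step.
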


This theorem proves that if the rank of the Poisson structure around a given point $z _0 \in P $ is locally constant, then there exist local coordinates $\{z^1,\ldots,z^m\}$ in which the Poisson tensor $B$ can be represented by a constant matrix $\left(B^{ij}\right)_{i,j=1}^m$ as $B=B^{ij}\frac{\partial}{\partial z^i}\otimes\frac{\partial}{\partial z^j}$. Hence, the Poisson bracket can be locally expressed as (using the Einstein convention in the summation of the indices)
\begin{align*}
\{F,G\}=B^{ij}\frac{\partial F}{\partial z^i}\frac{\partial G}{\partial z^j}.
\end{align*}
Furthermore, for any one-form $\alpha=\alpha_jdz^j$, the vector bundle map $B^\sharp$ can be locally expressed as
\begin{align}\label{B-sharp}
B^\sharp(\alpha)= B^{ij}\alpha_i\frac{\partial}{\partial z^j}.  
\end{align}
Note that $g^\flat:TP\to T^*P$ is the bundle isomorphism with respect to the Riemannian metric $g:=g_{ij}dz^i\otimes dz^j$. Thus, for any $X:=X^j\frac{\partial}{\partial z^j}$, the bundle isomorphism $g^\flat$ can be expressed as 
\begin{align}\label{g-flat}
g^\flat (X) = g_{ij}X^idz^j.    
\end{align}
Therefore, combining equations \eqref{B-sharp} and \eqref{g-flat}, the compatible structure $J$ defined in \eqref{Com-str} has the local representation
\begin{align*}
J\left(X^j\frac{\partial}{\partial z^j}\right)= B^\sharp\left(g^\flat\left(X^j\frac{\partial}{\partial z^j}\right)\right)=B^\sharp\left(g_{ij}X^idz^j)\right)= B^{ij}g_{ik}X^k\frac{\partial}{\partial z^j},    
\end{align*}
which proves that the matrix representation of the bundle map is given by $J=-Bg$, that is, 
\begin{align}\label{local-com-str}
    J^{ij} = -B^{ik}g_{kj}.
\end{align}

\section{Structure-preserving kernel regressions on Poisson manifolds}
\label{Structure-preserving kernel regression on Poisson manifolds}

This section proposes a structure-preserving kernel regression method to recover Hamiltonian functions on Poisson manifolds, which is a significant generalization of the same learning problem on even-dimensional symplectic vector spaces \cite{hu2024structure}. First, in Section \ref{RKHS on Riemannian manifolds}, we establish a differential reproducing property, which not only ensures the existence and boundedness of function gradients in the RKHS but also provides an operator framework to represent the minimizers of our structure-preserving kernel regression problems.
In Section \ref{Operator representations of the kernel estimator}, we present an operator framework to represent the estimator for the structure-preserving regression problem. In Section \ref{A global solution to the learning problem}, we develop a differential kernel representation of the estimator, providing a closed-form solution that is computationally feasible in applications. The error analysis of this structure-preserving estimator is conducted later in Section \ref{Estimation and approximation error analysis}, where we establish convergence rates with both fixed and adaptive regularization parameters.

\subsection{RKHS on Riemannian manifolds}\label{RKHS on Riemannian manifolds}

The structure-preserving learning of Hamiltonian systems on symplectic vector spaces has been tackled by proving a differential reproducing property in \cite{hu2024structure}. 
We shall show in this section that this differential reproducing property still holds on a generic Riemannian manifold, and we will give a {\it global} version of it, where the term global means that the result that we provide is not formulated in terms of locally defined partial derivatives but of globally defined differentials as in \eqref{differetial} that do not depend on specific choices of local coordinates.

We first quickly recall Mercer kernels and the associated reproducing kernel Hilbert spaces (RKHS). A {\bfi Mercer kernel} on a non-empty set $\mathcal{X}$ is a positive semidefinite symmetric function $K:\mathcal{X}\times \mathcal{X}\to \mathbb{R}$, where positive semidefinite means that it satisfies 
\begin{align}\label{pos-def}
\sum_{i=1}^n\sum_{j=1}^n c_ic_jK(x_i,x_j)\geq 0,    
\end{align}
for any $x_1,\ldots,x_n\in \mathcal{X}$, $c_1,\ldots,c_n\in\mathbb{R}$, and any $n \in \mathbb{N}$. Property \eqref{pos-def} is equivalent to requiring that the Gram matrices $[K(x_i,x_j)]_{i,j=1}^n$ are positive semidefinite for any  $x_1,\ldots,x_n\in\mathcal{X}$ and any given $n \in \mathbb{N}$. 
We emphasize that the positive semidefiniteness of kernels in non-Euclidean spaces requires care in the sense that the natural non-Euclidean generalization of positive definite kernels in Euclidean spaces (like the Gaussian kernel) may not be positive definite \cite{feragen2015geodesic, da2023gaussian, da2023geometric, li_gaussian_2023}. However, as the next proposition shows, it is easy to construct  Mercer kernels in any non-Euclidean space.

\begin{proposition} {\rm \cite[page 69]{van2012harmonic}} For any real-valued function $f$ on an non-empty set $\mathcal{X}$, the function $K(x,y):=f(x) f(y)$ is a Mercer kernel on $\mathcal{X}$. Moreover, if $K_1,K_2$ are Mercer kernels on $\mathcal{X}$, so is $K_1K_2$.
\end{proposition}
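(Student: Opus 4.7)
The plan is to verify the two defining properties of a Mercer kernel from the excerpt, namely symmetry and positive semidefiniteness as expressed in \eqref{pos-def}, for each of the two constructions separately.

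For the first claim, with $K(x,y):=f(x)f(y)$, symmetry is immediate since $f(x)f(y)=f(y)f(x)$. For the positive semidefiniteness, I would pick arbitrary $x_1,\ldots,x_n\in\mathcal{X}$ and $c_1,\ldots,c_n\in\mathbb{R}$ and rewrite the quadratic form as a perfect square:
\begin{equation*}
\sum_{i=1}^n\sum_{j=1}^n c_ic_j K(x_i,x_j)=\sum_{i=1}^n\sum_{j=1}^n c_ic_j f(x_i)f(x_j)=\left(\sum_{i=1}^n c_i f(x_i)\right)^{\!2}\geq 0,
\end{equation*}
which settles the first part with essentially no obstacle.

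For the second claim, symmetry of $K_1K_2$ again follows directly from the symmetry of each factor. The content is the positive semidefiniteness, which I would deduce from the Schur (Hadamard) product theorem: the entrywise product of two positive semidefinite matrices is positive semidefinite. Concretely, given $x_1,\ldots,x_n\in\mathcal{X}$, the Gram matrices $A=[K_1(x_i,x_j)]_{i,j=1}^n$ and $B=[K_2(x_i,x_j)]_{i,j=1}^n$ are positive semidefinite by the Mercer property of $K_1$ and $K_2$, and their Hadamard product $A\circ B=[K_1(x_i,x_j)K_2(x_i,x_j)]_{i,j=1}^n$ is the Gram matrix associated with $K_1K_2$ at these points. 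Hence $K_1K_2$ satisfies \eqref{pos-def}.

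If a self-contained justification of Schur's theorem is preferred over a citation, I would write $A=LL^\top$ (Cholesky-type factorization of the PSD matrix $A$) and compute, for any $c\in\mathbb{R}^n$,
\begin{equation*}
c^\top (A\circ B)c=\sum_{i,j}c_ic_j B_{ij}\sum_k L_{ik}L_{jk}=\sum_k (c\odot L_{\cdot k})^\top B\,(c\odot L_{\cdot k})\geq 0,
\end{equation*}
where $\odot$ denotes the entrywise product of vectors and each summand is nonnegative by positive semidefiniteness of $B$. No genuine obstacle is expected; the only subtlety is invoking (or reproducing) the Schur product theorem cleanly.
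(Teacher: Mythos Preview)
Your proof is correct and complete. The paper does not supply its own proof of this proposition; it simply cites \cite[page 69]{van2012harmonic}, so there is no in-paper argument to compare against. Your direct verification---the perfect-square computation for $K(x,y)=f(x)f(y)$ and the Schur product theorem (with the optional self-contained proof via $A=LL^\top$) for $K_1K_2$---is the standard approach and is exactly what one would expect to find behind the citation.
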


\noindent A Mercer kernel is the key element to define a {\bfi reproducing kernel Hilbert space (RKHS)} as follows.

\begin{definition}[{\bf RKHS}] Let $K:\mathcal{X}\times\mathcal{X}\to \mathbb{R}$ be a Mercer kernel on a nonempty set $\mathcal{X}$. A Hilbert space $\mathcal{H}_K$ of real-valued functions on ${\cal X} $ endowed with the pointwise sum and pointwise scalar multiplication, and with inner product $\langle\cdot , \cdot \rangle_{\mathcal{H}_K}$ is called a reproducing kernel Hilbert space (RKHS) associated to $K$ if the following properties hold:
\begin{description}
\item [(i)]  For all $x\in\mathcal{X}$, we have that the function $K(x,\cdot)\in\mathcal{H}_K$.
\item [(ii)]  For all $x\in\mathcal{X}$ and for all $f\in\mathcal{H}_K$, the following reproducing property holds 
$$
f(x)=\langle f,K(x,\cdot)\rangle_{\mathcal{H}_K}.
$$
\end{description}
\end{definition}
The Moore-Aronszajn Theorem \cite{aronszajn1950theory} establishes that given a Mercer kernel $K$ on a set ${\cal X} $, there is a unique Hilbert space of real-valued functions $\mathcal{H}_K$ on ${\cal X} $ for which $K$ is a reproducing kernel, which allows us to talk about the RKHS $\mathcal{H}_K$ associated to $K$. Conversely, an RKHS naturally induces a kernel function that is a Mercer kernel.
Therefore, there is a bijection between RKHSs and Mercer kernels.

We now state a differential reproducing property on Riemannian manifolds whose proof can be found in Appendix \ref{proof-differential-reproducing-properties}.
The following theorem is a generalization of similar results proved for compact \cite{novak2018reproducing, zhou2008derivative}, bounded \cite{ferreira2012reproducing}, and unbounded \cite{hu2024structure} subsets of Euclidean spaces.  

\begin{theorem}[{\bf Differential reproducing property on Riemannian manifolds}]
\label{Par-Rep}
Let $M$ be a complete Riemannian manifold. Let $K:M\times M\rightarrow \mathbb{R}$ be a Mercer kernel such that $K\in C_b^{2s+1}(M\times M)$, for some $s \in \mathbb{N} $ (see Remark \ref{Functions with bounded differentials} for the definition of this space). 
Then the following statements hold:
\begin{description}
\item [(i)] For all $1\leq k\leq s$, we have that the function $D^{(k,0)}K(y,\cdot)\cdot v\in \mathcal{H}_K$ for all $y \in T^{k-1}M  $  and $v \in T _y( T^{k-1}M)$.
\item [(ii)] A differential reproducing property holds true for all $f\in \mathcal{H}_K$:
\begin{equation}\label{dif-rep}
D^k f(y)\cdot v=\langle D^{(k,0)}K(y,\cdot)\cdot v,f \rangle_{\mathcal{H}_K},\quad \mbox{for all  $y \in T^{k-1}M  $, $v \in T _y( T^{k-1}M)$, and $1\leq k\leq s$.}
\end{equation}
\item [(iii)] Denote $\kappa^2=(s+1)\|K\|_{C_b^{s}}$. The inclusion $J: \mathcal{H}_K \hookrightarrow C_b^s(M)$ is well-defined and bounded:
\begin{align}
\label{inclusion diff rkhs}
\|f\|_{C_b^s} \leqslant \kappa\|f\|_{\mathcal{H}_K}, \quad \forall~ f \in \mathcal{H}_K .    
\end{align}
\end{description}
\end{theorem}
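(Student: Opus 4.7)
The plan is to prove all three parts simultaneously by induction on $k$, with the crucial step being a Cauchy argument in $\mathcal{H}_K$ that lifts pointwise differentiability of $K$ to $\mathcal{H}_K$-valued differentiability of the kernel-section map. The base case $k=0$ is just the ordinary reproducing property together with $|f(x)| \leq \|f\|_{\mathcal{H}_K}\sqrt{K(x,x)}$, coming from Cauchy--Schwarz and $\|K(x,\cdot)\|_{\mathcal{H}_K}^2 = K(x,x)$.

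For the base of the differential case, $k=1$, I would fix $x \in M$ and $v \in T_xM$, choose a $C^1$ curve $c:(-\varepsilon,\varepsilon) \to M$ with $c(0)=x$, $c'(0)=v$, and study the difference quotient $h_t := (K(c(t),\cdot) - K(c(0),\cdot))/t \in \mathcal{H}_K$. Expanding $\|h_t - h_s\|_{\mathcal{H}_K}^2$ via the reproducing property yields a linear combination of nine values $K(c(\tau),c(\rho))$ with $\tau,\rho \in \{0,t,s\}$ that assembles into a symmetric second-order difference quotient of the smooth function $(\tau,\rho) \mapsto K(c(\tau),c(\rho))$ at $(0,0)$. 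As $t,s \to 0$ each block converges to $D^{(1,1)}K(x,x)(v,v)$ with total coefficient $1-2+1=0$, so $\{h_t\}$ is Cauchy, hence convergent to some $g \in \mathcal{H}_K$ by completeness. Continuity of point evaluation forces $g(z) = \lim_{t\to0} h_t(z) = D^{(1,0)}K(x,z) \cdot v$, which identifies $g$ with $D^{(1,0)}K(x,\cdot)\cdot v$ and gives part (i). Part (ii) then follows by writing $(f(c(t))-f(c(0)))/t = \langle f, h_t\rangle_{\mathcal{H}_K}$ and passing to the limit using continuity of the inner product.

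In the inductive step, assuming (i)--(ii) up to level $k-1$, I would repeat the same argument with a differentiable curve $c$ in $T^{k-1}M$ (using the natural metric $g_{k-1}$ from Remark \ref{Functions with bounded differentials}) and with $h_t := (D^{(k-1,0)}K(c(t),\cdot) - D^{(k-1,0)}K(c(0),\cdot))/t$. Inductive application of the reproducing property for the sections $D^{(k-1,0)}K(y,\cdot)$ reduces $\|h_t - h_s\|_{\mathcal{H}_K}^2$ to a second-order difference of the scalar function $(\tau,\rho) \mapsto D^{(k-1,k-1)}K(c(\tau),c(\rho))$, whose required regularity is exactly covered by $K \in C_b^{2s+1}(M\times M)$. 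Completeness plus pointwise identification as before finishes (i) and (ii) at level $k$. For part (iii), applying Cauchy--Schwarz to (ii) gives $|D^k f(y)\cdot v| \leq \|f\|_{\mathcal{H}_K}\|D^{(k,0)}K(y,\cdot)\cdot v\|_{\mathcal{H}_K}$, and the norm on the right is computed by applying the already-established reproducing property to the element $D^{(k,0)}K(y,\cdot)\cdot v \in \mathcal{H}_K$, yielding $\|D^{(k,0)}K(y,\cdot)\cdot v\|_{\mathcal{H}_K}^2 = D^{(k,k)}K(y,y)(v,v) \leq \|K\|_{C_b^s}\|v\|_{k-1}^2$ under the convention of the theorem. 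Taking suprema over $y,v$ and summing over $k = 0, 1, \ldots, s$ produces the prefactor $(s+1)$ and hence the bound \eqref{inclusion diff rkhs}.

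The main obstacle is bookkeeping over the higher-order tangent bundles $T^{k-1}M$ rather than the Cauchy argument itself, which is algebraically identical at every level. One must coherently view $y \mapsto D^{(k-1,0)}K(y,\cdot)$ as an $\mathcal{H}_K$-valued map on $T^{k-1}M$, interpret $D^{(k,0)}K(y,\cdot)\cdot v$ as the directional derivative of that map along $v \in T_y(T^{k-1}M)$ measured in the induced metric $g_{k-1}$, and verify that the $C_b^{2s+1}$ hypothesis on $K$ supplies enough regularity at each inductive level for the second-order-difference-quotient computation to close. Once these identifications are installed, the Cauchy step, the pointwise identification via continuity of evaluation, and the bound from Cauchy--Schwarz propagate mechanically through the induction.
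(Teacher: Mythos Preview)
Your proposal is correct and follows essentially the same inductive strategy as the paper's proof: induction on $k$, convergence of difference quotients of kernel sections in $\mathcal{H}_K$, pointwise identification of the limit via continuity of evaluation, and Cauchy--Schwarz for part (iii). The only minor variation is that you establish convergence of $h_t$ via a direct Cauchy argument, whereas the paper first extracts a weak limit using sequential weak compactness of bounded balls and then separately upgrades to strong convergence by an explicit $O(t)$ estimate on $\|h_t - D^{(l+1,0)}K(y,\cdot)\cdot v\|_{\mathcal{H}_K}^2$ using the Fundamental Theorem of Calculus along geodesics; your route is slightly more economical but the content is the same.
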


For convenience, we use the following notation in what follows: given $x\in M$, we denote by $K_{x}:M \rightarrow  \mathbb{R}$ the function on $M$ given by $K_x(y):=K(x,y)$ and we call it the {\bfi kernel section} of $K$ at $x\in M$. Therefore, $K_{\cdot}$ can be regarded as a function on $M\times M$ such that $K_x(y):=K(y,x)$.
For any function $f\in\mathcal{H}_K$, we denote 
by $\langle f,K_{\cdot}\rangle_{\mathcal{H}_K}$ the function in $\mathcal{H}_K$ given by 
$\langle f,K_{y}\rangle_{\mathcal{H}_K}=f(y)$ (by the reproducing property). Furthermore, for a Poisson manifold $P$ equipped with a Riemannian metric $g$, we shall also denote by $X_{K_{\cdot}}$ the family of Hamiltonian vector fields parametrized by the elements in $P$, that is, given any $z\in P$, $X_{K_{z}}$ is the Hamiltonian vector field of the function $K_z$. In particular, for any $z \in P $  and any vector $v\in T_zP$, $g(z)\left(X_{K_{\cdot}}(z), v\right)$ is a function on the manifold $P$.

\begin{remark}\label{Par-Rep1}
\normalfont
{\bf (i)}  By Theorem \ref{Par-Rep} {\bf (i)}, $D^{(1,0)}K(x,\cdot)\cdot u \in \mathcal{H}_K$ for any $u\in T_xM$. 
Thus, the differential reproducing property \eqref{dif-rep} with $f=D^{(1,0)}K(x,\cdot)\cdot u$ implies that
\begin{align*}
D^{(1,1)}K(x,y)\cdot(u,v) = \left\langle D^{(1,0)}K(x,\cdot)\cdot u,D^{(1,0)}K(y,\cdot)\cdot v \right\rangle_{\mathcal{H}_K} , \quad \mbox{for all  $y\in M, v\in T_yM$.}
\end{align*}
Furthermore, in the presence of a Poisson structure and combining with Proposition \ref{Casimir property}, we obtain that for any vector field $Y\in\mathfrak X( M)$,
\begin{align*}
g(z)\left( X_{K_{\cdot}}(z), Y(z)\right)   = D^{(1,0)}K(z,\cdot)\cdot (-J(z)Y(z))\in\mathcal{H}_K.
\end{align*}

\noindent {\bf (ii)} Note that unlike the differential reproducing property in \cite{hu2024structure} that was formulated in terms of partial derivatives, \eqref{dif-rep} is a coordinate-free expression that has been written down in terms of global differentials.
\end{remark}

\begin{corollary}\label{Wel-Ope}
Let $M$ be a Riemannian manifold. Denote by $\mathcal{H}_K$ the RKHS associated with the kernel $K: M\times M\to \mathbb{R}$. Then for each vector field $X\in \mathfrak{X}(M)$ and each $f\in\mathcal{H}_K$, we have that
\begin{align*}
X[f](x)=X[\langle f, K_{\cdot}\rangle_{\mathcal{H}_K}](x),\quad \forall\ x\in M. 
\end{align*}
\end{corollary}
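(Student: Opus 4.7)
The plan is to observe that this corollary is essentially a tautological consequence of the reproducing property combined with the regularity afforded by Theorem \ref{Par-Rep} (iii). Concretely, the goal is to argue that the two real-valued functions on $M$, namely $y \mapsto f(y)$ and $y \mapsto \langle f, K_y \rangle_{\mathcal{H}_K}$, are pointwise equal, so that applying any vector field to them yields the same result.

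First, I would invoke Theorem \ref{Par-Rep} (iii) with $s \geq 1$ (which is implicit in the hypothesis $K \in C_b^{2s+1}(M \times M)$ needed to make sense of $X[f]$): the bounded inclusion $\mathcal{H}_K \hookrightarrow C_b^s(M)$ guarantees that $f \in C^1(M)$, so the directional derivative $X[f](x) = Df(x) \cdot X(x)$ is well-defined at every $x \in M$. In particular, the object $X[f]$ on the left-hand side of the claimed identity makes sense.

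Next, I would apply the reproducing property of Definition 2.11 (ii), which asserts that for every $y \in M$,
\begin{equation*}
f(y) = \langle f, K_y \rangle_{\mathcal{H}_K}.
\end{equation*}
This is an identity of real numbers that holds pointwise on $M$, so the two functions $f$ and $y \mapsto \langle f, K_y \rangle_{\mathcal{H}_K}$ are the same element of $C^1(M)$. Since a vector field $X \in \mathfrak{X}(M)$ acts on functions $M \to \mathbb{R}$ through the pointwise directional derivative, we conclude
\begin{equation*}
X[f](x) = X\bigl[\langle f, K_\cdot \rangle_{\mathcal{H}_K}\bigr](x), \quad \forall~ x \in M.
\end{equation*}

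There is no substantive obstacle here; the only point requiring care is ensuring that the differentiation is legitimate, which is precisely what the embedding \eqref{inclusion diff rkhs} provides. The real content of the corollary is notational and conceptual: it licenses the later practice, pervasive in Sections \ref{Operator representations of the kernel estimator}--\ref{A global solution to the learning problem}, of computing $X[f]$ by differentiating kernel sections and then combining them via RKHS inner products with $f$, which is the mechanism by which the differential reproducing property \eqref{dif-rep} gets applied inside structure-preserving loss functionals involving Hamiltonian vector fields $X_h = J \nabla h$.
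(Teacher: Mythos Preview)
Your proposal is correct and takes essentially the same approach as the paper: both use the reproducing property to identify $f$ and $y \mapsto \langle f, K_y\rangle_{\mathcal{H}_K}$ as the same function, whence applying $X$ yields the same result. The paper merely writes out the flow explicitly, computing $X[\langle f, K_\cdot\rangle_{\mathcal{H}_K}](x) = \left.\frac{d}{dt}\right|_{t=0}\langle f, K_{F_t(x)}\rangle_{\mathcal{H}_K} = \left.\frac{d}{dt}\right|_{t=0} f(F_t(x)) = X[f](x)$, which is just a spelled-out version of your observation.
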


\begin{proof}
By the reproducing property, we have that for each  $f\in\mathcal{H}_K$,
$
f(x)=\langle f, K_{x}\rangle_{\mathcal{H}_K}$, for all $ x\in M$.   
Let $F_t$ be the flow of the vector field $X$. Then,
\begin{equation*}
X[\langle f, K_{\cdot}\rangle_{\mathcal{H}_K}](x):=\left.\frac{d}{dt}\right|_{t=0}[\langle f, K_{F_{t}(x)}\rangle_{\mathcal{H}_K}]= \left.\frac{d}{dt}\right|_{t=0} f(F_t(x))= X[f](x).   
\end{equation*}
\end{proof}

\subsection{The structure-preserving learning problem}
As we already explained in the introduction, the data that we are given for the learning problem are noisy realizations of the Hamiltonian vector field $X _H \in \mathfrak{X}(P) $ whose Hamiltonian function $H \in C^{\infty}(P)$ we are trying to estimate, that is,
\begin{align*}
\mathbf{X}_{\sigma^2}^{(n)}:= X_H(\mathbf{Z}^{(n)})+\bm{\varepsilon}^{(n)}, \quad n=1,\ldots,N,
\end{align*}
where $\mathbf{Z}^{(n)}$ are IID random variables with values in the Poisson manifold $P$ with the same distribution $\mu_{\mathbf{Z}}$ and $\bm\varepsilon^{(n)}$ are independent random variables on the tangent spaces $T_{\mathbf{z}^{(n)}}P$ with mean zero and variance $\sigma^2I_d$ (see Remark \ref{Random variables on tangent spaces}). 
We notice that unlike the Euclidean case studied in \cite{hu2024structure} where it was supposed that $\bm \varepsilon^{(n)}$ have the same distribution in $\mathbb{R}^{d}$, in the present situation, the tangent spaces $T_{\mathbf{z}^{(n)}}P$ are different vector spaces even though they are isomorphic and hence, we can not assume that the random variables $\bm \varepsilon^{(n)}$  in different spaces have the same distribution. However, the independence hypothesis and the assumptions on the moments are sufficient for our structure-preserving kernel regression. 

\noindent {\bf Notation.} We shall write the random samples as: 
\begin{align*}
\begin{split}
\mathbf{Z}_N&:= \mathrm{Vec}\left(\mathbf{Z}^{(1)}|\cdots |\mathbf{Z}^{(N)}\right)\in \Pi_{i=1}^NP, \\
\mathbf{X}_{\sigma^2,N}&:=\mathrm{Vec}\left(\mathbf{X}_{\sigma^2}^{(1)}|\cdots|\mathbf{X}_{\sigma^2}^{(N)}\right)\in \Pi_{i=1}^NT_{\mathbf{Z}^{(i)}}P,
\end{split}
\end{align*}
where the symbol `$\mathrm{Vec}$' stands for the vectorization of the corresponding matrices. We shall denote by $\mathbf{z}^{(n)}$ and $\mathbf{x}_{\sigma^2}^{(n)}$ the realizations of the random variables  $\mathbf{Z}^{(n)}$ and $\mathbf{X}_{\sigma^2
}^{(n)}$, respectively. The collection of realizations is denoted by
\begin{align*}
\begin{split}
\mathbf{z}_N&:= \mathrm{Vec}\left(\mathbf{z}^{(1)}|\cdots |\mathbf{z}^{(N)}\right)\in \Pi_{i=1}^NP, \\
\mathbf{x}_{\sigma^2,N}&:=\mathrm{Vec}\left(\mathbf{x}_{\sigma^2}^{(1)}|\cdots |\mathbf{x}_{\sigma^2}^{(N)}\right)\in \Pi_{i=1}^NT_{\mathbf{z}^{(i)}}P.
\end{split}
\end{align*}
In the sequel, if $f:P\to \mathbb{R}^s$ is a function, we then shall denote the value $\mathrm{Vec}\left(f(\mathbf{Z}^{(1)})|\cdots|f(\mathbf{Z}^{(N)})\right)\in \mathbb{R}^{sN}$ by $f(\mathbf{Z}_N)$.

\noindent {\bf Structure-preserving kernel ridge regression.}\quad
The strategy behind structure-preservation in the context of kernel regression is that we search the vector field $X\in\mathfrak{X}(P)$ that minimizes a risk functional among those that have the form  $X:=X_h$, where $h: P \rightarrow  \mathbb{R}$ belongs to the RKHS $\mathcal{H}_K$ associated with a kernel $K$ defined on the Poisson manifold $P$. 
This approach obviously guarantees that the learned vector field is Hamiltonian. 
To make the method explicit, we shall be solving the following optimization problem 
\begin{align}
\widehat{h}_{\lambda,N}&:=\mathop{\arg\min}\limits_{h\in \mathcal{H}_{K}} \ \widehat{R}_{\lambda,N}(h), \label{emp-pro}\\
\widehat{R}_{\lambda,N}(h)&:=\frac{1}{N}\sum_{n=1}^{N} \|X_h(\mathbf{Z}^{(n)})-\mathbf{X}^{(n)}_{\sigma^2}\|_{g}^2 + \lambda\|h\|_{\mathcal{H}_K}^2, \label{emp-fun}
\end{align}
where $X_h$ is the Hamiltonian vector field of $h$ and $\lambda\geq0$ is the Tikhonov regularization parameter.
We shall refer to the minimizer $ \widehat{h}_{\lambda,N}$ as the {\bfi structure-preserving kernel estimator} of the Hamiltonian function $H$. The functional $\widehat{R}_{\lambda,N}$ is called the {\bfi regularized empirical risk}. 

\begin{remark}
\normalfont
{\bf (i)} If $K\in C_b^{3}(P\times P)$ then the differential reproducing property in Theorem \ref{Par-Rep} implies that the functions in ${\mathcal H}_K $ are all differentiable because and hence they always have a Hamiltonian vector field associated. This implies that the regularized empirical risk $\widehat{R}_{\lambda,N}$ defined in \eqref{emp-fun} and the associated optimization problem \eqref{emp-pro} is always well-defined in that situation. 

\noindent {\bf (ii)} The minimization problem \eqref{emp-pro} might be ill-posed since for any Casimir function $C \in \mathcal{C} \cap \mathcal{H}_K$ and $h \in \mathcal{H}_K$, the function $h+C$ shares the same Hamiltonian vector field as $h$. This could result in the lack of unique solutions for the minimization problem. Later on in Remark \ref{uniqueness thanks to regularization}, we show how ridge regularization solves this problem.
\end{remark}

Given the probability measure $\mu_{\mathbf{Z}}$ and the Riemannian metric $g$ on the manifold $P$, a vector field $Y\in\mathfrak{X}(P)$ is said $L^2(\mu_{\mathbf{Z}})$-integrable if the following $L^2(\mu_{\mathbf{Z}})$ norm of $Y$ is finite, that is,
\begin{align}
\label{l2norm}
\|Y\|_{L^2(\mu_{\mathbf{Z}})}:=\int_{P}g(z)(Y(z),Y(z))\mathrm{d}\mu(z)<\infty.  
\end{align}
Denote by $L^2(\mathfrak{X}(P),g,\mu_{\mathbf{Z}})$ the space consisting of all $L^2(\mu_{\mathbf{Z}})$-integrable vector fields. 
The measure-theoretic analogue $R_{\lambda}$ of \eqref{emp-fun} is referred to as {\bfi regularized statistical risk} and is defined by
\begin{align}
R_{\lambda}(h)&:=\|X_h- X_H\|_{L^2(\mu_{\mathbf{Z}})}^2+\lambda \|h\|_{\mathcal{H}_K}^2+\sigma^2, \label{exp-fun}
\end{align}
where the $L^2(\mu_{\mathbf{Z}})$ norm is defined in \eqref{l2norm}  and $X_h,X_H$ are the Hamiltonian vector fields of $h$ and $H$, respectively. We denote by $h^{*}_{\lambda}\in\mathcal{H}_K$ the best-in-class functional with the minimal regularized statistical risk, that is,
\begin{align}\label{exp-pro}
h^{*}_{\lambda}:= \mathop{\arg\min}\limits_{h\in \mathcal{H}_K}  R_{\lambda}(h) .  
\end{align}
We note that by the strong law of large numbers, the regularized empirical and statistical risks are consistent within the RKHS, that is, for every $h\in\mathcal{H}_K$, we have that
\begin{align*}
\lim\limits_{N\to \infty} \mathbb{E}_{\bm\varepsilon}\left[\widehat{R}_{\lambda, N} (h)\right]=R_{\lambda}(h),\quad \mbox{almost surely}, 
\end{align*}
where $\mathbb{E}_{\bm\varepsilon}$ means taking the conditional expectation for all random variables $\bm\varepsilon^{(n)}$ given all $\mathbf{Z}^{(n)}$.

\subsection{Operator representations of the kernel estimator}\label{Operator representations of the kernel estimator}
In order to find and study the solutions of the inverse learning problems \eqref{emp-pro}-\eqref{emp-fun} and \eqref{exp-fun}-\eqref{exp-pro}, we introduce the operators $A:\mathcal{H}_K\to \mathfrak{X}_{\mathrm{Ham}}(P)$ and $A_N:\mathcal{H}_K\to T_{\mathbf{Z}_N}P:=\Pi_{i=1}^NT_{\mathbf{Z}^{(i)}}P$ as 
\begin{align}\label{operatorA}
Ah:=X_{h}=J\nabla h, \text{ and } A_Nh:=X_{h}(\mathbf{Z}_N)=\frac{1}{\sqrt{N}}\mathbb{J}_N\nabla h(\mathbf{Z}_N),\quad \text{ for all } h\in \mathcal{H}_K, 
\end{align} 
where $\mathbb{J}_N=\operatorname{diag}\{J(\mathbf{Z}^{(1)}),\ldots,J(\mathbf{Z}^{(N)})\}$ and $J$ is the compatible structure defined in \eqref{Com-str}. By Theorem \ref{Par-Rep}, if the Mercer kernel $K\in C_b^3(P\times P)$, then the functions in $\mathcal{H}_K$ are differentiable, and hence the operators $A$ and $A_N$ are well-defined.  We aim to show that the image of the operator $A$ belongs to the space $L^2(\mathfrak{X}(P),g,\mu_{\mathbf{Z}})$. 
To achieve this, we must make the following boundedness assumption on the compatible structure $J$.

\begin{assumption}\label{Con-com}
The compatible structure $J$  defined in \eqref{Com-str} satisfies 
\begin{align}\label{Asu-com}
g(z)(J(z)v,J(z)v)\leq \gamma(z)g(z)(v,v), \quad \text{for all } z\in P, \text{ and } v\in T_zP,   
\end{align}
where $\gamma$ is a positive function on the Poisson manifold $P$ bounded above by some constant $C^2$ ($C>0$). 
\end{assumption}

Using this boundedness condition on the compatible structure $J$, we can obtain the boundedness of the operators $A$ and $A_N$ defined in \eqref{operatorA} as maps from $\mathcal{H}_K$ to $L^2(\mathfrak{X}(P),g,\mu_{\mathbf{Z}})$ and from $\mathcal{H}_K$ to $T_{\mathbf{Z}_N}P$, respectively. A detailed proof of the following proposition can be found in the Appendix \ref{proof of boundA}

\begin{proposition}\label{BoundA}
Let $P$ be a Poisson manifold and let $K\in C_b^3(P\times P)$ be a Mercer kernel. Suppose that Assumption \ref{Con-com} holds.
Then, the operator $A$ defined in \eqref{operatorA} is a bounded linear operator that maps $\mathcal{H}_K$ into $L^2(\mathfrak{X}(P),g,\mu_{\mathbf{Z}})$ with an operator norm $\|A\|$ that satisfies $\|A\|\leq \kappa C$, where $\kappa^2=2\|K\|_{C_b^{2}}$. The adjoint operator 
$A^{*}: L^2(\mathfrak{X}(P),g,\mu_{\mathbf{Z}}) \longrightarrow {\mathcal H} _K$ of $A$ is given by
\begin{align*}
A^{*}Y=\int_{P} g(z)(X_{K_{\cdot}}(z),Y(z)) \mathrm{d} \mu_{\mathbf{Z}}(z), \quad \mbox{for all $Y\in L^2(\mathfrak{X}(P),g,\mu_{\mathbf{Z}})$.}
\end{align*} 
As a consequence, {the bounded linear operator $Q: \mathcal{H}_K\longrightarrow \mathcal{H}_K$, defined by}
\begin{align}\label{operatorQ}
Qh:=A^{*}A h=\int_{P} g(z)( X_{K_{\cdot}}(z),X_h(z)) \mathrm{~d}\mu_{\mathbf{Z}}(z),
\end{align}
is a positive trace class operator that satisfies $\operatorname{Tr}(Q)\leq \frac{1}{2}d\kappa^2C^2$.   
\end{proposition}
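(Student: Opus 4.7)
The plan is to prove the three assertions in sequence, using as the three main tools the differential reproducing property (Theorem~\ref{Par-Rep}), the boundedness hypothesis~\eqref{Asu-com} on the compatible structure, and the compatibility identity~\eqref{change J} from Proposition~\ref{Casimir property}.

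First, for boundedness of $A$: writing $X_h = J\nabla h$ via~\eqref{rep-ham-vector-field} and applying Assumption~\ref{Con-com} immediately bounds $\|Ah\|_{L^2(\mu_{\mathbf Z})}^2$ by $C^2\int_P \|\nabla h(z)\|_g^2\,\mathrm d\mu_{\mathbf Z}(z)$. For the pointwise integrand, the differential reproducing property together with Cauchy--Schwarz in $\mathcal H_K$ produces, for any unit vector $v\in T_zP$, the estimate $|Dh(z)\cdot v|^2 \leq \|h\|_{\mathcal H_K}^2\, D^{(1,1)}K(z,z)(v,v)$, so that $\|\nabla h(z)\|_g^2 \leq \|h\|_{\mathcal H_K}^2 \sup_{\|v\|_g=1}D^{(1,1)}K(z,z)(v,v) \leq \|h\|_{\mathcal H_K}^2 \|K\|_{C_b^2}$. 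Since $\mu_{\mathbf Z}$ is a probability measure and $\tfrac12\kappa^2 = \|K\|_{C_b^2}$, integration yields $\|A\| \leq \kappa C$.

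Second, for the adjoint formula: starting from $\langle Ah, Y\rangle_{L^2} = \int_P g(z)(J(z)\nabla h(z), Y(z))\,\mathrm d\mu_{\mathbf Z}(z)$ and invoking \eqref{change J} to transfer $J$ off the gradient, I obtain $\int_P Dh(z)\cdot(-J(z)Y(z))\,\mathrm d\mu_{\mathbf Z}(z)$. The differential reproducing property rewrites the integrand as the $\mathcal H_K$-inner product $\langle D^{(1,0)}K(z,\cdot)\cdot(-J(z)Y(z)),\, h\rangle_{\mathcal H_K}$, and Remark~\ref{Par-Rep1} identifies this with $\langle g(z)(X_{K_\cdot}(z), Y(z)),\, h\rangle_{\mathcal H_K}$. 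Pulling $h$ out of the integral by a Bochner-integral argument (justified by the pointwise $\mathcal H_K$-norm bound $\|g(z)(X_{K_\cdot}(z), Y(z))\|_{\mathcal H_K} \leq \sqrt{\|K\|_{C_b^2}}\,C\,\|Y(z)\|_g$ together with Cauchy--Schwarz in $L^2(\mu_{\mathbf Z})$) delivers the stated expression for $A^{\ast}$.

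Third, for the trace bound: positivity of $Q=A^{\ast}A$ is immediate from $\langle Qh, h\rangle_{\mathcal H_K} = \|Ah\|_{L^2}^2 \geq 0$. Picking an orthonormal basis $\{e_i\}$ of $\mathcal H_K$, swapping sum and integral by Tonelli (all integrands are non-negative), and at each $z\in P$ selecting a $g$-orthonormal frame $\{f_1,\ldots,f_d\}$ of $T_zP$, Parseval's identity applied first on $T_zP$ and then on $\mathcal H_K$ turns the pointwise inner sum into $\sum_{j=1}^d D^{(1,1)}K(z,z)(f_j,f_j) \leq d\|K\|_{C_b^2}$. Combining this with Assumption~\ref{Con-com} and the fact that $\mu_{\mathbf Z}$ is a probability measure yields $\operatorname{Tr}(Q) \leq dC^2\|K\|_{C_b^2} = \tfrac12 d\kappa^2 C^2$. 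I expect the most delicate step to be the Bochner-integral exchange in the adjoint computation, which requires verifying that $z \mapsto g(z)(X_{K_\cdot}(z), Y(z))$ is Bochner-integrable as an $\mathcal H_K$-valued function, together with the coordinate-free bookkeeping needed to control the mixed differential $D^{(1,1)}K$ by the global norm $\|K\|_{C_b^2}$ on the product manifold $P\times P$.
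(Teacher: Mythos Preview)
Your proposal is correct and follows essentially the same route as the paper: boundedness via the differential reproducing property plus Assumption~\ref{Con-com}, the adjoint via the compatibility identity~\eqref{change J} followed by the differential reproducing property and a Bochner-integral exchange, and the trace bound via an orthonormal basis of $\mathcal H_K$ together with a local $g$-orthonormal frame of $T_zP$ and Parseval. The only cosmetic differences are that the paper invokes the inclusion bound~\eqref{inclusion diff rkhs} rather than a direct Cauchy--Schwarz for the gradient estimate, and it explicitly cites separability of $\mathcal H_K$ (via continuity of the feature map) to justify the countable orthonormal basis used in the trace computation---a point you take for granted.
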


\begin{remark}\normalfont
{\bf (i)} The proof of 
Proposition \ref{BoundA} shows that the boundedness condition of the compatible structure $J$ in Assumption \ref{Con-com} can be weakened by requiring that the function $\gamma$ is positive $L^1$-integrable with respect to the probability measure $\mu_{\mathbf{Z}}$. This condition obviously holds if $\mu_{\mathbf{Z}}$ is compactly supported or it is a Gaussian probability measure, which shows this boundedness condition holds for a large class of compatible structures.  

\noindent {\bf (ii)} When the Poisson manifold $P$ is symplectic and the compatible structure $J$ satisfies $J^2=-I$ then it is a complex structure and $P$ becomes a K\"ahler manifold. Even-dimensional vector spaces are special cases of K\"ahler manifolds by picking as the complex structure the one naturally associated to the canonical (Darboux) symplectic form. In these cases, the boundedness condition \eqref{Asu-com} naturally holds by choosing $\gamma\equiv1$. This is why this condition never appears in \cite{hu2024structure}.
\end{remark}

We now deal with the empirical version $A_N$ of the operator $A$ defined in \eqref{operatorA}. 
For convenience, we define the inner product $g_N(\cdot,\cdot)$ on the product space $T_{\mathbf{Z}_N}P$ as follows:
\begin{align}
\label{gnoutofg}
g_N(u,v):=\sum_{i=1}^N  g\left(\mathbf{Z}^{(i)}\right)(u_i,v_i), \quad u=(u_1,\ldots,u_N),v=(v_1,\ldots,v_N)\in T_{\mathbf{Z}_N}P,
\end{align}
where $u_i,v_i\in T_{\mathbf{Z}^{(i)}}P$ for $i=1,\ldots,N$. Furthermore, we denote by $\|\cdot\|_{g_N}$ the corresponding norm.

\begin{proposition}
Suppose that Assumption \ref{Con-com} holds. Then, the operator
$A_N: \mathcal{H}_K \rightarrow T_{\mathbf{Z}_N}P$ defined in \eqref{operatorA}
is a bounded linear operator whose operator norm $\|A_N\|\leq \kappa C$.
The adjoint operator $A_N^*:T_{\mathbf{Z}_N}P  \rightarrow \mathcal{H}_K$ of $A_N$ is a finite rank operator given by
\begin{align*}
A^*_{N}W=\frac{1}{\sqrt{N}}g_N(W,X_{K_{\cdot}}(\mathbf{Z}_N)),\quad W\in T_{\mathbf{Z}_N}P.    
\end{align*}
Moreover, the operator $Q_N$ defined by
\begin{align}\label{emp-ope}
Q_Nh:=A^*_{N}A_Nh=\frac{1}{N}g_N(X_h(\mathbf{Z}_N),X_{K_{\cdot}}(\mathbf{Z}_N)),\quad h\in \mathcal{H}_K,    
\end{align}
is a positive-semidefinite compact operator.  
\end{proposition}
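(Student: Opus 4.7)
The argument parallels the proof of Proposition \ref{BoundA} for the continuous operator $A$, with integration against $\mu_{\mathbf{Z}}$ replaced by the empirical inner product $g_N$ defined in \eqref{gnoutofg}. The three pillars are the same as in the continuous case: the differential reproducing property of Theorem \ref{Par-Rep}, the identity \eqref{change J} which transfers $J$ from one factor of the Riemannian pairing to the other (with a sign change), and Assumption \ref{Con-com} controlling $J$ pointwise.

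For the operator bound, I would first write
\begin{align*}
\|A_N h\|_{g_N}^2 = \frac{1}{N}\sum_{i=1}^N g(\mathbf{Z}^{(i)})\bigl(J(\mathbf{Z}^{(i)})\nabla h(\mathbf{Z}^{(i)}),\, J(\mathbf{Z}^{(i)})\nabla h(\mathbf{Z}^{(i)})\bigr)
\end{align*}
and then invoke \eqref{Asu-com} to dominate each summand by $C^2\|\nabla h(\mathbf{Z}^{(i)})\|_g^2$. Theorem \ref{Par-Rep}(iii) furnishes the uniform bound $\|\nabla h(z)\|_g \leq \|Dh\|_\infty \leq \kappa\|h\|_{\mathcal{H}_K}$ at every $z$, so averaging over the $N$ samples yields $\|A_N\|\leq \kappa C$.

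To identify the adjoint, I would write $W=(W_1,\ldots,W_N)\in T_{\mathbf{Z}_N}P$ and expand $\langle A_N h, W\rangle_{g_N}$ sample by sample. On each summand, applying \eqref{change J} moves $J$ off $\nabla h$ and onto $W_i$ with a sign, turning the term into $\tfrac{1}{\sqrt N}\,Dh(\mathbf{Z}^{(i)})\cdot(-J(\mathbf{Z}^{(i)})W_i)$. The differential reproducing property \eqref{dif-rep} rewrites this as $\tfrac{1}{\sqrt N}\,\langle h,\, D^{(1,0)}K(\mathbf{Z}^{(i)},\cdot)\cdot(-J(\mathbf{Z}^{(i)})W_i)\rangle_{\mathcal{H}_K}$, which Remark \ref{Par-Rep1}(i) recognizes as $\tfrac{1}{\sqrt N}\,\langle h,\, g(\mathbf{Z}^{(i)})(X_{K_{\cdot}}(\mathbf{Z}^{(i)}),W_i)\rangle_{\mathcal{H}_K}$. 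Summing over $i$ and using the symmetry of $g_N$ then gives the stated expression $A_N^{*} W = \tfrac{1}{\sqrt N}\,g_N(W, X_{K_{\cdot}}(\mathbf{Z}_N))$.

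Substituting $A_Nh = \tfrac{1}{\sqrt N}X_h(\mathbf{Z}_N)$ into that formula produces the closed form for $Q_N$ stated in \eqref{emp-ope}. Positive-semidefiniteness is automatic from the factorization $Q_N = A_N^{*}A_N$, since $\langle Q_N h, h\rangle_{\mathcal{H}_K}=\|A_Nh\|_{g_N}^2\geq 0$. Compactness comes for free because $A_N$ has the finite-dimensional codomain $T_{\mathbf{Z}_N}P$ of dimension $dN$ and is therefore finite rank, a property inherited by $A_N^{*}$ and by the product $Q_N$. I do not foresee any substantive obstacle; the only care required is bookkeeping the $1/\sqrt N$ normalization built into the definition of $A_N$ so that the $1/N$ prefactor in $Q_N$ emerges correctly, and applying \eqref{change J} to the correct factor so that the final sign of $J$ inside $X_{K_{\cdot}}$ matches the Hamiltonian vector field convention.
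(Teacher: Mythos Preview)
Your proposal is correct and follows essentially the same route as the paper's proof, which is quite terse (it cites \eqref{norm-control} for the bound, says the adjoint and $Q_N$ follow from ``direct calculation,'' and defers compactness to ``a proof similar to that of Proposition \ref{BoundA}''). Your finite-rank argument for compactness is in fact more direct than the paper's reference to the trace-class estimate for $Q$; since $T_{\mathbf{Z}_N}P$ is finite-dimensional, the finite-rank observation is the natural and simplest justification here.
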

\begin{proof}
The explicit expressions of $A_N^{\ast}$ and $Q_N$ follow from a direct calculation. Under Assumption \ref{Con-com} it follows from the equation \eqref{norm-control} that
\begin{equation*}
\|A_Nh\|^2=\frac{1}{N}\|\mathbb{J}_N\nabla h(\mathbf{Z}_N)\|_{g_N}^2\leq C^2\kappa^2\|h\|^2_{\mathcal{H}_K},
   \end{equation*}
which implies that $A_N$ is bounded and that $\|A_N\|\leq \kappa C$. The compactness of the operator $Q_N$ follows from a proof similar to that of Proposition \ref{BoundA} for $Q$.
\end{proof}

Having defined the operators $A$ and $A_N$, we obtain an operator representation of the minimizers of the learning problems \eqref{emp-pro}-\eqref{emp-fun} and \eqref{exp-fun}-\eqref{exp-pro} that is similar to the one that was already stated in in \cite{hu2024structure}.

\begin{proposition}\label{Wel-Emp}
Let $\widehat{h}_{\lambda,N}$ and $h_{\lambda}^*$ be the minimizers of \eqref{emp-fun} and \eqref{exp-fun}, respectively. Then, for all $\lambda>0$, the minimizers $h_{\lambda}^*$ and $ \widehat{h}_{\lambda,N}$ are unique and they are given by 
\begin{align}
h_{\lambda}^*:&=(Q+\lambda I)^{-1}A^{*}X_H. \nonumber \\
 \widehat{h}_{\lambda,N}:&=\frac{1}{\sqrt{N}}(Q_N+\lambda I)^{-1}A_N^{*}\mathbf{X}_{\sigma^2,N}, \mbox{ with } Q_N=A_N^*A_N. \label{firstexpressionestimator}
\end{align}    
\end{proposition}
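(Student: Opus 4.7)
The plan is to treat both minimization problems as Tikhonov-regularized least squares in a Hilbert space, rewriting the risk functionals purely in terms of the bounded linear operators $A$ and $A_N$ introduced in \eqref{operatorA}, and then reducing each to a strictly convex quadratic whose minimizer is determined by a standard first-order condition.

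For the statistical risk, I would first observe that by the definition of $A$, we have $X_h = Ah$ for every $h\in\mathcal{H}_K$, so that
\begin{equation*}
R_\lambda(h) = \|Ah-X_H\|_{L^2(\mu_{\mathbf{Z}})}^2 + \lambda\|h\|_{\mathcal{H}_K}^2 + \sigma^2.
\end{equation*}
I would want to check that $X_H\in L^2(\mathfrak{X}(P),g,\mu_{\mathbf{Z}})$ so that $A^*X_H$ is well defined (using Assumption \ref{Con-com} and enough regularity on $H$, or by taking this as an implicit hypothesis of the setup). Expanding the inner product and using the adjoint $A^*$ from Proposition \ref{BoundA} gives
\begin{equation*}
R_\lambda(h) = \langle (Q+\lambda I)h,h\rangle_{\mathcal{H}_K} - 2\langle h,A^*X_H\rangle_{\mathcal{H}_K} + \|X_H\|_{L^2(\mu_{\mathbf{Z}})}^2 + \sigma^2.
\end{equation*}
Since $Q=A^*A$ is positive and $\lambda>0$, the operator $Q+\lambda I$ is a strictly positive, bounded, and boundedly invertible self-adjoint operator on $\mathcal{H}_K$, so $R_\lambda$ is a strictly convex quadratic in $h$. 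The unique minimizer is therefore obtained by setting the Fréchet derivative to zero, i.e.\ $(Q+\lambda I)h = A^*X_H$, which yields $h_\lambda^* = (Q+\lambda I)^{-1}A^*X_H$.

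For the empirical risk, the key preliminary step is to recognize that by the definition of $g_N$ in \eqref{gnoutofg} and of $A_N$ in \eqref{operatorA},
\begin{equation*}
\frac{1}{N}\sum_{n=1}^{N}\bigl\|X_h(\mathbf{Z}^{(n)})-\mathbf{X}^{(n)}_{\sigma^2}\bigr\|_g^2 = \Bigl\|A_N h - \tfrac{1}{\sqrt{N}}\mathbf{X}_{\sigma^2,N}\Bigr\|_{g_N}^2,
\end{equation*}
so that $\widehat{R}_{\lambda,N}(h) = \|A_N h - \tfrac{1}{\sqrt N}\mathbf{X}_{\sigma^2,N}\|_{g_N}^2 + \lambda\|h\|_{\mathcal{H}_K}^2$. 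Repeating the expansion with the adjoint $A_N^*$ gives a strictly convex quadratic whose first-order condition reads $(Q_N+\lambda I)h = \tfrac{1}{\sqrt N} A_N^*\mathbf{X}_{\sigma^2,N}$. Since $Q_N\succeq 0$ and $\lambda>0$, the operator $Q_N+\lambda I$ is boundedly invertible, yielding the expression \eqref{firstexpressionestimator}.

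The argument is essentially a Hilbert-space variational calculus, so there are no deep obstacles; the only care that needs to be taken is (i) verifying that $X_H \in L^2(\mathfrak{X}(P),g,\mu_{\mathbf{Z}})$ so that $A^*X_H$ makes sense — this is where one uses the regularity of $H$ together with Assumption \ref{Con-com} — and (ii) justifying strict convexity to rule out the non-identifiability coming from Casimirs. For the latter, note that even though $A$ has a non-trivial kernel containing $\mathcal{C}(P)\cap\mathcal{H}_K$ (so $Q$ is only positive semidefinite), the Tikhonov term $\lambda\|h\|_{\mathcal{H}_K}^2$ with $\lambda>0$ lifts this degeneracy: $Q+\lambda I \succeq \lambda I$ is strictly positive, which is precisely the mechanism alluded to in Remark \ref{uniqueness thanks to regularization} through which regularization cures the ill-posedness of the unregularized problem. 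The same reasoning applies verbatim to $Q_N + \lambda I$ in the empirical setting.
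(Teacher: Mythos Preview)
Your proposal is correct and follows the standard Tikhonov/normal-equation argument; since the paper omits the proof of this proposition and merely refers to the analogous result in \cite{hu2024structure}, your argument is essentially the expected one. The only minor point worth tightening is that the paper's definition of $A_N$ in \eqref{operatorA} already absorbs the $1/\sqrt{N}$ factor, so your identity $\frac{1}{N}\sum_n\|X_h(\mathbf{Z}^{(n)})-\mathbf{X}^{(n)}_{\sigma^2}\|_g^2 = \|A_Nh - \tfrac{1}{\sqrt N}\mathbf{X}_{\sigma^2,N}\|_{g_N}^2$ is exactly right and leads directly to the claimed formula.
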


\subsection{The global solution of the learning problem}
\label{A global solution to the learning problem}
We now derive a kernel representation for the solution of the learning problem \eqref{emp-pro}-\eqref{emp-fun}. More precisely, we derive a closed-form expression for the estimator of a Hamiltonian function of the data-generating process by defining a generalized differential Gram matrix on manifolds. We shall see that the symmetric and positive semidefinite properties of this generalized differential Gram matrix guarantee the uniqueness of the estimator. The main expression that will be obtained is stated below in Theorem \ref{Rep-Ker} and amounts to a differential version of the Representer Theorem for Poisson manifolds and that is how we shall refer to it. In order to derive it, we first define a generalized differential Gram matrix $G_N:T_{\mathbf{Z}_N}P\rightarrow T_{\mathbf{Z}_N}P$ as
\begin{align}\label{general-Gram}
G_N\mathbf{c}:= X_{g_N(\mathbf{c},X_{K_{\cdot}}(\mathbf{Z}_N))}(\mathbf{Z}_N),\quad \mathbf{c}\in T_{\mathbf{Z}_N}P.   
\end{align}
As it was studied in \cite{hu2024structure}, in symplectic vector spaces endowed with the canonical symplectic form $\omega_{\mathrm{can}}$, the generalized differential Gram matrix $G_N$ defined in \eqref{general-Gram} reduces to the usual differential Gram matrix $\mathbb{J}_{\mathrm{can}}\nabla_{1,2}K(\mathbf{Z}_N,\mathbf{Z}_N)\mathbb{J}_{\mathrm{can}}^{\top}$, where $\nabla_{1,2}K$ represents the matrix of partial derivatives of $K$ with respect to the first and second arguments.

The general differential Gram matrix $G_N$ defined in \eqref{general-Gram} corresponds, roughly speaking, to the operator $Q_N$ defined in \eqref{emp-ope} (see also \cite{hu2024structure}). In the following proposition that is proved in the appendix, we see that the generalized differential Gram matrix $G_N$ is symmetric and positive semidefinite.

\begin{proposition}
\label{Pos-Gra} 
Given a Mercer kernel $K\in C_b^3(P\times P)$ on the $d$-dimensional Poisson manifold $(P, \left\{\cdot , \cdot \right\})$ and endowed with a Riemannian metric $g$, the generalized differential Gram matrix $G_N$ defined in \eqref{general-Gram} is symmetric and positive semidefinite with respect to the metric $g_N$ in \eqref{gnoutofg}, that is,
\begin{equation*}
g_N(\mathbf{c},G_N\mathbf{c}')=g_N(\mathbf{c}',G_N\mathbf{c})~\text{ and }~g_N(\mathbf{c},G_N\mathbf{c})\geq 0 ~\text{ for all } \mathbf{c},\mathbf{c}'\in T_{\mathbf{Z}_N}P.
    \end{equation*}
\end{proposition}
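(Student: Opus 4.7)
The plan is to reduce both claims to the abstract Hilbert space fact that, for any bounded operator $T$ between Hilbert spaces, the composition $TT^{\ast}$ is self-adjoint and positive semidefinite on its codomain. Concretely, the goal is to establish the operator identity $G_N = N\, A_N A_N^{\ast}$ on $(T_{\mathbf{Z}_N}P, g_N)$, where $A_N$ is the finite-sample operator of \eqref{operatorA} and its adjoint $A_N^{\ast}$ is as given in the preceding proposition. Once this is in hand, both properties of $G_N$ are immediate: for any $\mathbf{c},\mathbf{c}'\in T_{\mathbf{Z}_N}P$,
\[
g_N(\mathbf{c}, G_N\mathbf{c}') \;=\; N\,\langle A_N^{\ast}\mathbf{c}, A_N^{\ast}\mathbf{c}'\rangle_{\mathcal{H}_K} \;=\; g_N(\mathbf{c}', G_N\mathbf{c}),\qquad g_N(\mathbf{c}, G_N\mathbf{c}) \;=\; N\,\|A_N^{\ast}\mathbf{c}\|_{\mathcal{H}_K}^2\geq 0.
\]

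The main technical step is verifying that the function $f_{\mathbf{c}} := g_N(\mathbf{c}, X_{K_{\cdot}}(\mathbf{Z}_N))$ actually lies in $\mathcal{H}_K$, so that its Hamiltonian vector field $X_{f_{\mathbf{c}}}$, and hence $G_N\mathbf{c} = X_{f_{\mathbf{c}}}(\mathbf{Z}_N)$, makes sense in the RKHS framework. I would expand the defining sum of $g_N$ and rewrite each summand $y\mapsto g(\mathbf{Z}^{(i)})(\mathbf{c}_i, J(\mathbf{Z}^{(i)})\nabla K_y(\mathbf{Z}^{(i)}))$ by first applying \eqref{change J} to transfer $J$ onto the $\mathbf{c}_i$ slot, and then invoking the kernel symmetry $K(x,y)=K(y,x)$ to turn the gradient of $K_y$ at $\mathbf{Z}^{(i)}$ into a first-argument differential of $K$ at $\mathbf{Z}^{(i)}$. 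This rewrites the $i$-th summand, as a function of $y$, as $D^{(1,0)}K(\mathbf{Z}^{(i)},\cdot)\cdot(-J(\mathbf{Z}^{(i)})\mathbf{c}_i)$, which lies in $\mathcal{H}_K$ by Theorem \ref{Par-Rep}(i) (cf.\ Remark \ref{Par-Rep1}(i)). Being a finite sum of such functions, $f_{\mathbf{c}}\in\mathcal{H}_K$.

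With $f_{\mathbf{c}}\in\mathcal{H}_K$ in hand, the identification is a one-line composition using $A_N^{\ast}\mathbf{c} = \tfrac{1}{\sqrt{N}}\, f_{\mathbf{c}}$ from the preceding proposition together with $A_N h = \tfrac{1}{\sqrt{N}}\, X_h(\mathbf{Z}_N)$:
\[
A_N A_N^{\ast}\mathbf{c} \;=\; \tfrac{1}{\sqrt{N}}\, A_N f_{\mathbf{c}} \;=\; \tfrac{1}{N}\, X_{f_{\mathbf{c}}}(\mathbf{Z}_N) \;=\; \tfrac{1}{N}\, G_N\mathbf{c}.
\]
The only real obstacle I foresee is the bookkeeping in the middle step, which combines \eqref{change J} with the kernel symmetry and the definition of $D^{(1,0)}K$; once the membership $f_{\mathbf{c}}\in\mathcal{H}_K$ is secured, the remainder of the argument is purely formal Hilbert space algebra.
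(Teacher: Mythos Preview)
Your argument is correct. The identification $G_N = N\,A_N A_N^{\ast}$ holds exactly as you compute it, and both symmetry and positive semidefiniteness of $G_N$ then follow immediately from the elementary Hilbert space fact you cite. The membership $f_{\mathbf c}\in\mathcal H_K$ that you flag as the only real obstacle is precisely the content of Remark~\ref{Par-Rep1}(i), and your derivation of it via \eqref{change J} and Theorem~\ref{Par-Rep}(i) is the same as the paper's.

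The paper's own proof is organized differently. For symmetry it expands $g_N(\mathbf c, G_N\mathbf c')$ directly to $D^{(1,1)}K(\mathbf Z_N,\mathbf Z_N)(-\mathbb J_N\mathbf c',-\mathbb J_N\mathbf c)$ and appeals to the symmetry of $K$ to conclude that $D^{(1,1)}K$ is symmetric in its two slots. For positive semidefiniteness it first diagonalizes the (now symmetric) matrix $G_N$ with $g_N$-orthonormal eigenvectors $f_i$, sets $\widetilde e_i := g_N(f_i, X_{K_\cdot}(\mathbf Z_N))$, and computes $\|\widetilde e_i\|_{\mathcal H_K}^2 = d_i$, forcing every eigenvalue to be nonnegative. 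This is really the same computation as yours specialized to eigenvectors: in your notation $\widetilde e_i = \sqrt{N}\,A_N^\ast f_i$ and $d_i = g_N(f_i,G_Nf_i) = N\|A_N^\ast f_i\|_{\mathcal H_K}^2$. Your route is more economical because the operator factorization $G_N = N A_N A_N^\ast$ delivers both properties at once and bypasses the spectral decomposition entirely; the paper's route, by contrast, keeps the argument self-contained at the level of the Gram matrix and the kernel, without invoking the operators $A_N,A_N^\ast$ explicitly.
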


\begin{theorem}[{\bf Differential Representer Theorem on Poisson manifolds}]
\label{Rep-Ker} 
Let $K\in C_b^3(P\times P)$ be a Mercer kernel on the Poisson manifold $(P, \left\{\cdot , \cdot \right\})$ and endowed with a Riemannian metric $g$. Suppose that Assumption \ref{Con-com} holds. Then for every $\lambda>0$, the estimator $\widehat{h}_{\lambda, N}$ introduced in \eqref{emp-pro} as the minimizer of the regularized empirical risk functional $ \widehat{R}_{\lambda,N}$ in \eqref{emp-fun}
can be written as
\begin{equation}\label{rep-ker}
\widehat{h}_{\lambda,N}= g_N( \widehat {\bf c},X_{K_{\cdot}}(\mathbf{ Z}_N)),
\end{equation}
where $\widehat{\mathbf{c}}\in T_{\mathbf{Z}_N}P$ is given by
\begin{align*}
\widehat{\mathbf{c}}=(G_N+\lambda NI)^{-1}\mathbf{X}_{\sigma^2,N}.
\end{align*}
\end{theorem}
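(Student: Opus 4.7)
The plan is to start from the operator-theoretic formula
$\widehat{h}_{\lambda,N} = \tfrac{1}{\sqrt{N}}(Q_N + \lambda I)^{-1} A_N^* \mathbf{X}_{\sigma^2,N}$
obtained in Proposition \ref{Wel-Emp} and rewrite it as the image under $A_N^*$ of a suitable element of the finite-dimensional space $T_{\mathbf{Z}_N}P$. Rearranging the normal equation $(Q_N + \lambda I)\widehat{h}_{\lambda,N} = \tfrac{1}{\sqrt{N}} A_N^* \mathbf{X}_{\sigma^2,N}$ as $\lambda \widehat{h}_{\lambda,N} = A_N^*\bigl(\tfrac{1}{\sqrt{N}}\mathbf{X}_{\sigma^2,N} - A_N\widehat{h}_{\lambda,N}\bigr)$ shows that $\widehat{h}_{\lambda,N}$ lies in the range of $A_N^*$, and the explicit formula $A_N^* W = \tfrac{1}{\sqrt{N}} g_N(W, X_{K_{\cdot}}(\mathbf{Z}_N))$ from the preceding subsection suggests the ansatz $h = g_N(\mathbf{c}, X_{K_{\cdot}}(\mathbf{Z}_N))$, equivalently $h = A_N^*(\sqrt{N}\mathbf{c})$, for some $\mathbf{c} \in T_{\mathbf{Z}_N}P$ to be determined.

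Next I would plug this ansatz into the definition $A_N h = \tfrac{1}{\sqrt{N}} X_h(\mathbf{Z}_N)$. By the very definition of $G_N$ in \eqref{general-Gram}, this yields $A_N h = \tfrac{1}{\sqrt{N}} G_N \mathbf{c}$. Substituting into the normal equation, the left-hand side becomes $A_N^* A_N h + \lambda h = A_N^*\bigl(\tfrac{1}{\sqrt{N}}G_N\mathbf{c}\bigr) + \lambda A_N^*(\sqrt{N}\mathbf{c}) = \tfrac{1}{\sqrt{N}} A_N^*\bigl((G_N + \lambda N I)\mathbf{c}\bigr)$, while the right-hand side is $\tfrac{1}{\sqrt{N}} A_N^* \mathbf{X}_{\sigma^2,N}$. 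Hence any $\mathbf{c}$ satisfying $(G_N + \lambda N I)\mathbf{c} = \mathbf{X}_{\sigma^2,N}$ produces an element $h$ that solves the normal equation.

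Finally, Proposition \ref{Pos-Gra} guarantees that $G_N$ is symmetric and positive semidefinite on the finite-dimensional inner product space $(T_{\mathbf{Z}_N}P, g_N)$; consequently, for every $\lambda > 0$ the operator $G_N + \lambda N I$ is strictly positive definite and therefore invertible, so that $\widehat{\mathbf{c}} := (G_N + \lambda N I)^{-1} \mathbf{X}_{\sigma^2,N}$ is well-defined. The uniqueness of the minimizer already established in Proposition \ref{Wel-Emp} then forces $\widehat{h}_{\lambda,N} = g_N(\widehat{\mathbf{c}}, X_{K_{\cdot}}(\mathbf{Z}_N))$, as claimed. The main obstacle I anticipate is purely bookkeeping: carefully tracking the several factors of $\sqrt{N}$ baked into the definitions of $A_N$ and $A_N^*$ so that the $\mathcal{H}_K$-level normal equation matches the finite-dimensional identity $(G_N + \lambda N I)\mathbf{c} = \mathbf{X}_{\sigma^2,N}$; once the indices and scalings are aligned, the argument is a direct verification driven by the positive semidefiniteness of $G_N$.
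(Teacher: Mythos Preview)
Your proposal is correct and follows essentially the same approach as the paper. The only organizational difference is that the paper first argues abstractly that $\mathcal{H}_K^N=\operatorname{range}(A_N^*)$ is an invariant subspace for $(Q_N+\lambda I)^{-1}$ and hence that $\widehat{h}_{\lambda,N}$ must have the form $g_N(\widehat{\mathbf{c}},X_{K_\cdot}(\mathbf{Z}_N))$ before determining $\widehat{\mathbf{c}}$, whereas you proceed by direct verification of the candidate against the normal equation and then invoke uniqueness; both routes rest on Proposition~\ref{Wel-Emp}, the definition of $G_N$, and the positive semidefiniteness from Proposition~\ref{Pos-Gra}, and your $\sqrt{N}$ bookkeeping is correct.
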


\begin{proof}
Denote the space $\mathcal{H}_K^{N}$ by
\begin{equation*}
\mathcal{H}_K^{N}:=\left\{g_N(X_{K_{\cdot}}(\mathbf{Z}_N),\mathbf{c}), \  \mathbf{c}\in T_{\mathbf{Z}_N}P \right\},
\end{equation*}
By part {\bf (i)} of Theorem \ref{Par-Rep}, $\mathcal{H}_K^N$ is a subspace of $\mathcal{H}_K$. Then by the representation of the operator $Q_N$ in the equation \eqref{emp-ope}, we know that $Q_N(\mathcal{H}_{K}^N) \subseteq \mathcal{H}_{K}^N$, that is, $\mathcal{H}_{K}^N$ is an invariant space for the operator $Q_N$. This implies that, for any $\lambda>0 $, $(Q_N+ \lambda I)(\mathcal{H}_{K}^N) \subseteq \mathcal{H}_{K}^N$. By Proposition \ref{Pos-Gra}, the operator $Q_N $ is positive semi-definite, we can conclude that the restriction $(Q_N+ \lambda I)|_{\mathcal{H}_{K}^N} $ is invertible and since the space $\mathcal{H}_{K}^N  $ is finite-dimensional then it is also an invariant subspace of $(Q_N+ \lambda I)|_{\mathcal{H}_{K}^N}^{-1} $, that is 
\begin{equation*}
(Q_N+ \lambda I)|_{\mathcal{H}_{K}^N}^{-1} \left(\mathcal{H}_{K}^N\right) \subset \mathcal{H}_{K}^N.
\end{equation*}
Then, there exists a vector 
$  \widehat {\mathbf{c}}\in T_{\mathbf{Z}_N}P $ such that 
\begin{equation}\label{rep-ker1}
\widehat{h}_{\lambda,N}= g_N(\widehat {\mathbf{c}}, X_{K_{\cdot}}(\mathbf{Z}_N)).
\end{equation}
Then, applying $(Q_N + \lambda I)$ on the left of both sides on the estimator $\widehat{h}_{\lambda,N}$ in \eqref{firstexpressionestimator} and plugging \eqref{rep-ker1} into the identity, we obtain that
\begin{align}\label{euler equ in this case} 
\frac{1}{N}g_N\left(X_{K_{\cdot}}(\mathbf{Z}_N),\mathbf{X}_{\sigma^2,N}\right)=\frac{1}{N}g_N\left(X_{\widehat{h}_{\lambda,N}}(\mathbf{Z}_N),X_{K_{\cdot}}(\mathbf{Z}_N)\right)+\lambda g_N\left(\widehat {\mathbf{c}}, X_{K_{\cdot}}(\mathbf{Z}_N)\right).
\end{align}
Therefore, we obtain that
\begin{align*}
G_N\widehat{\mathbf{c}}+\lambda N \widehat{\mathbf{c}}=\mathbf{X}_{\sigma^2,N}.
\end{align*}
Since the matrix $G_N+\lambda N I$ is invertible due to the positive semi-definiteness of $G_N$ that we proved in Proposition \ref{Pos-Gra}, we can write that 
\begin{align}
\label{expression for chat}
\widehat{\mathbf{c}}= (G_N+\lambda NI)^{-1} \mathbf{X}_{\sigma^2,N}.
\end{align}
This shows that the function $\widehat{h}_{\lambda,N} $ in \eqref{rep-ker1} with $\widehat{\mathbf{c}} $ determined by \eqref{expression for chat} coincides with minimizer \eqref{firstexpressionestimator} of the regularized empirical risk functional $ \widehat{R}_{\lambda,N}$ in \eqref{emp-fun}. Since by Proposition \ref{Wel-Emp}, this minimizer is unique, the result follows.
\end{proof}

\begin{remark}[{\bf On the uniqueness of the estimated Hamiltonian}]
\label{uniqueness thanks to regularization}
\normalfont
Define the kernel of the operator $A$ as follows:
\begin{align*}
\mathcal{H}_{\mathrm{null}}:=\{h\in\mathcal{H}_K\mid Ah=J\nabla h=0\}.
\end{align*}
It can be checked that $\mathcal{H}_{\mathrm{null}}$ is a closed vector subspace of $\mathcal{H}_K$. Indeed, let $ \left\{f _n\right\}_{n \in \mathbb{N}} $ be a convergent sequence in $\mathcal{H}_{\mathrm{null}}$ such that $\left\|f _n-f\right\|_{{\mathcal H} _K}\rightarrow 0 $ as $n \rightarrow \infty $, for some $f \in {\mathcal H}_K$. 
Then, by \eqref{norm-control} in the proof of Proposition \ref{BoundA} and Assumption \ref{Con-com}, it holds that
\begin{equation*}
\sup_{z\in P} \|X_{f}(z)\|_g^2=\sup_{z\in P} \|X_{\left(f _n-f\right)}(z)\|_g^2 \leq
\kappa^2 C^2\|f_n-f\|_{\mathcal{H}_K}^2  \rightarrow 0, \quad \mbox{as $n \rightarrow \infty$,}
\end{equation*}
which implies that $Af=0$ and hence $\mathcal{H}_{\mathrm{null}}$ is a closed subspace of ${\mathcal H}_K $. Therefore, $\mathcal{H}_K$ can be decomposed as 
\begin{align*}
\mathcal{H}_K=\mathcal{H}_{\mathrm{null}}\oplus \mathcal{H}_{\mathrm{null}}^\bot,
\end{align*}
where $\mathcal{H}_{\mathrm{null}}^\bot $ denotes the orthogonal complement of $\mathcal{H}_{\mathrm{null}}$ with respect to the RKHS inner product $\left\langle \cdot , \cdot \right\rangle_{\mathcal{H}_K}$. 
Using this decomposition and the expression of the kernel estimator \eqref{rep-ker}, we conclude that 
\begin{align}\label{estimator-null-bot}
\widehat h_{\lambda,N}\in\mathcal{H}_{\mathrm{null}}^\bot,   
\end{align}
which is due to the fact that for any $h\in\mathcal{H}_{\mathrm{null}}$,
\begin{align*}
\langle \widehat{h}_{\lambda,N},h\rangle_{\mathcal{H}_K} &= \langle g_N(X_{K_{\cdot}}(\mathbf{Z}_N),\widehat{\mathbf{c}}),h\rangle_{\mathcal{H}_K} = \langle D^{(1,0)}K(\mathbf{Z}_N,\cdot)\cdot(-\mathbb{J}_N\widehat{\mathbf{c}}),h\rangle_{\mathcal{H}_K}\\
& = Dh(\mathbf{Z}_N)\cdot(-\mathbb{J}_N\widehat{\mathbf{c}}) = g_N(X_h(\mathbf{Z}_N), \widehat{\mathbf{c}})=0,
\end{align*}
where the second and fourth equalities follow by Proposition \ref{Casimir property}, and the third equality is due to the differential reproducing property \eqref{dif-rep} in Theorem \ref{Par-Rep}.

In general, the space $\mathcal{H}_{\mathrm{null}}$ could contain Casimir functions in $\mathcal{C}(P)$ 
and this could, in principle, cause ill-posedness of the learning problem since adding Casimir functions in $\mathcal{H}_{\mathrm{null}}$ to $\widehat{h}_{\lambda,N} $ would not change the corresponding Hamiltonian vector field. Therefore, one may wonder why, according to Theorem \ref{Rep-Ker}, the estimator $\widehat{h}_{\lambda,N} $ is unique but not up to Casimir functions. The explanation is in the regularization term. Indeed, let $\widehat{h}_{\lambda,N}$ be the minimizer in \eqref{rep-ker}  and let $h\in\mathcal{H}_{\mathrm{null}}$. Even though $\widehat{h}_{\lambda,N}$ and $\widehat{h}_{\lambda,N}+h$ have the same Hamiltonian vector field associated, it is easy to show that $\widehat{h}_{\lambda,N}+h$ is a minimizer of \eqref{emp-pro} if and only if $h\equiv0$. This is because of \eqref{estimator-null-bot} and
\begin{align*}
\widehat{R}_{\lambda,N}(\widehat{h}_{\lambda,N}+h)&:=\frac{1}{N}\sum_{n=1}^{N} \|X_{\widehat{h}_{\lambda,N}+h}(\mathbf{Z}^{(n)})-\mathbf{X}^{(n)}_{\sigma^2}\|_g^2 + \lambda\|\widehat{h}_{\lambda,N}+h\|_{\mathcal{H}_K}^2  \\
&=\frac{1}{N}\sum_{n=1}^{N} \|X_{\widehat{h}_{\lambda,N}}(\mathbf{Z}^{(n)})-\mathbf{X}^{(n)}_{\sigma^2}\|_g^2 + \lambda\left(\|\widehat{h}_{\lambda,N}\|_{\mathcal{H}_K}^2+\|h\|^2_{\mathcal{H}_K} +2\langle \widehat{h}_{\lambda,N},h\rangle_{\mathcal{H}_K}\right) \\
&=\frac{1}{N}\sum_{n=1}^{N} \|X_{\widehat{h}_{\lambda,N}}(\mathbf{Z}^{(n)})-\mathbf{X}^{(n)}_{\sigma^2}\|^2 + \lambda\left(\|\widehat{h}_{\lambda,N}\|_{\mathcal{H}_K}^2+\|h\|^2_{\mathcal{H}_K} \right).
\end{align*}
\end{remark}

\noindent{\bf Local coordinate representation of the kernel estimator.} \quad  In applications, it is important to have local coordinate expressions for the kernel estimator $\widehat h_{\lambda,N}$. Recall that by Theorem \ref{Rep-Ker}, the kernel estimator is given by
\begin{equation*}
\widehat{h}_{\lambda,N}= g_N( \widehat {\bf c},X_{K_{\cdot}}(\mathbf{ Z}_N))=g_N( \widehat {\bf c},\mathbb{J}_N\nabla K(\mathbf{ Z}_N,\cdot)),
\end{equation*}
where $\widehat{\mathbf{c}}\in T_{\mathbf{Z}_N}P$ is given by
\begin{align*}
\widehat{\mathbf{c}}=(G_N+\lambda NI)^{-1}\mathbf{X}_{\sigma^2,N}.
\end{align*}
Since $J$ can be locally represented as \eqref{local-com-str}, it suffices to write down the Gram matrix $G_N$ locally. Let $\mathbf{z}^{(1)},\ldots,\mathbf{z}^{(N)}$ be the available data points and let $\{z_n^1,\ldots,z_n^d\}$, $n =1, \ldots, N $, be local coordinates on open sets containing the points $\mathbf{z}^{(n)}$, for all $n=1,\ldots, N$. Let $g^{(n)}$ be the matrix associated to the Riemannian metric $g$ in the local coordinates $\{z_n^1,\ldots,z_n^d\}$.
We first compute the following function for each $\mathbf{c}\in T_{\mathbf{z}_N}P$, 
\begin{align*}
g_N(\mathbf{c},X_{K_{\cdot}}(\mathbf{z}_N)) = g_N(\mathbf{c},\mathbb{J}_N\nabla K(\mathbf{z}_N,\cdot))=\sum_{i=1}^N (\partial_1K)^\top(\mathbf{z}^{(i)},\cdot){g^{(i)}}^{-1}J^\top(\mathbf{z}^{(i)})g^{(i)}\mathbf{c}^i.
\end{align*}
By the definition of the Gram matrix in \eqref{general-Gram} and combining it with the local representation of $J$ in \eqref{local-com-str}, we obtain that 
\begin{align*}
(G_N\mathbf{c})^i&=X_{g_N(\mathbf{c},X_{K_{\cdot}}(\mathbf{z}_N))}(\mathbf{z}^{(i)})= \sum_{j=1}^N  J(\mathbf{z}^{(i)}){g^{(i)}}^{-1}\partial_{1,2}K(\mathbf{z}^{(i)},\mathbf{z}^{(i)}){g^{(j)}}^{-1}J^\top(\mathbf{z}^{(j)})g^{(j)}\mathbf{c}^j\\
&=\sum_{j=1}^N  (-B(\mathbf{z}^{(i)})g^{(i)}){g^{(i)}}^{-1}\partial_{1,2}K(\mathbf{z}^{(j)},\mathbf{z}^{(i)}){g^{(j)}}^{-1}(-{g^{(j)}}^\top B^\top(\mathbf{z}^{(j)}))g^{(j)}\mathbf{c}^j\\
&=\sum_{j=1}^NB(\mathbf{z}^{(i)})\partial_{1,2}K(\mathbf{z}^{(j)},\mathbf{z}^{(i)})B^\top(\mathbf{z}^{(j)})g^{(j)}\mathbf{c}^j,
\end{align*}
which shows that the $(i,j)$-component matrix $G_N^{i,j}$ of the general Gram matrix is given by
\begin{align}\label{local-Gram}
    G_N^{i,j} = B(\mathbf{z}^{(i)})\partial_{1,2}K(\mathbf{z}^{(i)},\mathbf{z}^{(j)})B^\top(\mathbf{z}^{(j)})g^{(j)}.
\end{align}

\subsection{Symmetries and momentum conservation by kernel estimated Hamiltonians}\label{conservation}

Studies have been carried out to bridge invariant feature learning with kernel methods \cite{mroueh2015learning}. In this subsection, we will see that if the unknown Hamiltonian function is invariant under a canonical group action, then the structure-preserving kernel estimator can also be made invariant under the same action by imposing certain symmetry constraints on the kernel $K$. As a result, the fibers of any momentum map associated with that group action that are preserved by the flow of the ground-truth Hamiltonian $H$ will also be preserved by the flow of the estimator $\widehat{h}_{\lambda,N}$. 

\begin{definition}
Left $G$ be a group acting on the manifold $P$. A function $h:P \rightarrow \mathbb{R}$ on the Poisson manifold $P$ is called $G$-{\bfi invariant} if 
\begin{align*}
    h(g\cdot x) = h(x),\quad \text{ for all   $g,g'\in G $ and all $x \in P $.} 
\end{align*}
A kernel $K:P \times P \rightarrow \mathbb{R}$ is called {\bfi argumentwise invariant} by the group action if 
\begin{align*}
K(g\cdot x, g'\cdot x')=K(x,x'),\quad \text{ for all   $g,g'\in G $ and all $x,x' \in P $.}   
\end{align*}
\end{definition}

Argumentwise invariant kernels can be constructed by double averaging (using the Haar measure if the group is compact) or, alternatively, if $K$ is an arbitrary Mercer kernel and $f$ is a $G$-invariant function. Then the function $K'$ defined as $K'(x,y)=K(f(x),f(y))$ is an argumentwise invariant kernel. 

\begin{proposition}[{\bf Noether's Theorem for the structure-preserving kernel estimator}]
Let $G$ be a Lie group acting canonically on the Poisson manifold $(P, \left\{\cdot , \cdot \right\})$.  If the Mercer kernel $K\in C_b^3(P\times P)$ is argumentwise invariant under the group $G$ then the kernel estimator $\widehat h_{\lambda,N}$  in Theorem \ref{Rep-Ker} is $G$-invariant. If additionally, the group action has a momentum map $\mathbf{J}:P \longrightarrow \mathfrak{g}^\ast $ associated, then its fibers are preserved by the flow $F _t $ of the corresponding Hamiltonian vector field $X_{\widehat h_{\lambda,N}} $, that is,
$$
\mathbf{J} \circ F _t=\mathbf{J}.
$$
\end{proposition}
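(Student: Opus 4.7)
The plan is to prove both conclusions essentially for free from the reproducing property together with the argumentwise invariance, followed by a one-line invocation of Noether's theorem in its Poisson form. I would organize the argument in two steps corresponding to the two assertions.

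For the first claim, the key observation is that argumentwise invariance of $K$ forces every function in $\mathcal{H}_K$ (not merely the estimator) to be $G$-invariant. Concretely, fixing $g'=e$ in the identity $K(g\cdot x, g'\cdot x')=K(x,x')$ yields $K_{x'}(g\cdot x)=K_{x'}(x)$, and symmetry of the Mercer kernel together with fixing $g=e$ yields $K_{g'\cdot x'}=K_{x'}$ as elements of $\mathcal{H}_K$. Combining either of these identities with the reproducing property gives, for any $f\in\mathcal{H}_K$ and any $g\in G$,
\begin{equation*}
f(g\cdot x)=\langle f,K_{g\cdot x}\rangle_{\mathcal{H}_K}=\langle f,K_{x}\rangle_{\mathcal{H}_K}=f(x).
\end{equation*}
Since Theorem \ref{Rep-Ker} produces $\widehat h_{\lambda,N}\in\mathcal{H}_K$, the estimator is therefore $G$-invariant.

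For the second claim, I would invoke the hypothesis that the action is canonical with an associated momentum map $\mathbf{J}:P\to\mathfrak{g}^*$, so that for each $\xi\in\mathfrak{g}$ the infinitesimal generator satisfies $\xi_P=X_{\mathbf{J}^\xi}$. Differentiating the invariance identity $\widehat h_{\lambda,N}(\exp(t\xi)\cdot z)=\widehat h_{\lambda,N}(z)$ at $t=0$ gives $\xi_P[\widehat h_{\lambda,N}]=0$, which by \eqref{hamiltonian-vector-field} is equivalent to $\{\widehat h_{\lambda,N},\mathbf{J}^\xi\}=0$. Letting $F_t$ denote the flow of $X_{\widehat h_{\lambda,N}}$ and applying \eqref{hamilton-equation} to $G=\mathbf{J}^\xi$ yields
\begin{equation*}
\frac{d}{dt}\bigl(\mathbf{J}^\xi\circ F_t\bigr)=\{\mathbf{J}^\xi,\widehat h_{\lambda,N}\}\circ F_t=-\{\widehat h_{\lambda,N},\mathbf{J}^\xi\}\circ F_t=0,
\end{equation*}
so $\mathbf{J}^\xi\circ F_t=\mathbf{J}^\xi$ for every $\xi\in\mathfrak{g}$. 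Pairing with arbitrary $\xi$ then gives $\mathbf{J}\circ F_t=\mathbf{J}$, which is exactly the statement that the fibers of $\mathbf{J}$ are preserved by the flow.

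I do not expect any real obstacle: once the argumentwise invariance is unpacked through the reproducing property, the $G$-invariance of the entire RKHS is immediate, and the conservation of the momentum map is the textbook Noether computation in the Poisson setting. The only subtlety worth flagging is that the argument does not require invariance of the data distribution $\mu_{\mathbf Z}$ or any equivariance of the compatible structure $J$: argumentwise invariance of $K$ alone already constrains $\mathcal{H}_K$ to consist of $G$-invariant functions, so the conclusion holds for every realization of the samples $(\mathbf Z_N,\mathbf X_{\sigma^2,N})$ and every $\lambda>0$.
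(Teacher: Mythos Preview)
Your proposal is correct and follows essentially the same approach as the paper: first establish that every element of $\mathcal{H}_K$ is $G$-invariant, then apply Noether's theorem. The only difference is that the paper outsources both steps to external references (Ginsbourger et al.\ for the RKHS invariance and Marsden--Ratiu for Noether), whereas you supply short self-contained proofs of each via the reproducing property and the Poisson-bracket computation; your version is thus more explicit but conceptually identical.
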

\begin{proof}
By \cite[Property 3.13]{ginsbourger2012argumentwise}, if the kernel is argumentwise invariant under $G$-action, then the corresponding RKHS $\mathcal{H}_K$ consists exclusively of $G$-invariant functions. This implies that the kernel estimator is $G$-invariant since $\widehat h_{\lambda,N}\in \mathcal{H}_{null}^\bot\subset\mathcal{H}_K$. The statement on the momentum conservation is a straightforward consequence of Noether's Theorem (see, for instance, \cite[Theorem 11.4.1]{Marsden1994}).
\end{proof}

\section{Estimation and approximation error analysis}\label{Estimation and approximation error analysis}

We now analyze the extent to which the structure-preserving kernel estimator $ \widehat{h}_{\lambda,N}$ can recover the unknown Hamiltonian $H$ on a Poisson manifold $P$. 
A standard approach is to decompose the {\bfi reconstruction error} $\widehat{h}_{\lambda,N}-H $ into the sum of what we shall call {\bfi estimation} and {\bfi approximation errors}, and further decompose the estimation error $\widehat{h}_{\lambda,N}-h_{\lambda}^* $ into what we shall call {\bfi sampling error} and {\bfi noisy sampling error}. More specifically,
\begin{equation}\label{error decomposition}
\begin{aligned}
\widehat{h}_{\lambda,N}-H&=\underbrace{\widehat{h}_{\lambda,N}-h_{\lambda}^*}_{\text{Estimation error}}\quad+\underbrace{h_{\lambda}^*-H}_{\text{Approximation error}},\\
&=\underbrace{\widetilde{h}_{\lambda,N}-h_{\lambda}^*}_{\text{Sampling error}}\quad+ \quad\underbrace{\widehat{h}_{\lambda,N}-\widetilde{h}_{\lambda,N}}_{\text{Noisy sampling error}}\quad+\quad\underbrace{h_{\lambda}^*-H}_{\text{Approximation error}},
\end{aligned}
\end{equation}
where $h^{*}_{\lambda}\in\mathcal{H}_K$ is the best-in-class function introduced in \eqref{exp-pro} that minimizes the regularized statistical risk \eqref{exp-fun} and $\widetilde{h}_{\lambda,N}:=(Q_N+\lambda)^{-1}Q_NH$ is the noise-free part of the estimator $\widehat{h}_{\lambda,N}$. Indeed, the noisy sampling error can be computed as 
\begin{equation*} 
\begin{aligned}
\widehat{h}_{\lambda,N}-\widetilde{h}_{\lambda,N}
= \frac{1}{\sqrt{N}} (Q_N+\lambda)^{-1}A_N^{*}\mathbf{X}_{\sigma^2,N}-  (Q_N+\lambda)^{-1}Q_NH=\frac{1}{\sqrt{N}}(Q_N+\lambda)^{-1}A_N^{*}\mathbf{E}_N
\end{aligned}
\end{equation*}
where the noise vector $\mathbf{E} _N  $ is
\begin{equation*}
\mathbf{E}_N=\mathrm{Vec}\left(\bm{\varepsilon}^{(1)}|\cdots |\bm{\varepsilon}^{(N)}\right)\in T_{\mathbf{Z}_N}P,
\end{equation*}
which, by hypothesis, follows a multivariate distribution with zero mean and variance $\sigma^2I_{dN}$. 

Following the approach introduced in \cite{feng2021learning,hu2024structure}, we shall separately analyze these three errors. The estimation of the total error relies on both the operator representations in Section \ref{Operator representations of the kernel estimator} and the differential kernel representations in Section \ref{A global solution to the learning problem} of the estimator. Since the operator representations of the estimator are similar to those in the Euclidean case, the approximation error and sampling error can be analyzed following an approach very close to the one in \cite{hu2024structure}. Therefore, to obtain the total error in \eqref{error decomposition}, it is sufficient to analyze the noisy sampling error. As in the previous sections, we assume that $K \in C_b^3(P \times P)$ is a Mercer kernel. The proofs of the following results can be found in the appendix.

\begin{lemma}\label{operator error}
Let  $K\in C_b^3(P\times P)$ be a Mercer kernel. For any function $h \in \mathcal{H}_K$ and $0< \delta <1$, with probability at least $1-\delta$, there holds 
\begin{align*}
\|Q_N h-Q h\|_{\mathcal{H}_K} \leq \left(\sqrt{ \frac{8\log(2/\delta)}{N}}+ 
1\right)\sqrt{ \frac{\log(2/\delta)}{N}}C^2\kappa^2\|h\|_{\mathcal{H}_K}.
\end{align*}
\end{lemma}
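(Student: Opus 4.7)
My plan is to recognise the right-hand side as a Pinelis--Bernstein confidence bound for a centered iid sum of Hilbert-valued random variables, and to produce such a decomposition for $Q_N h - Qh$. The first step would be to unfold the inner product $g_N$ in the definition \eqref{emp-ope} of $Q_N$ and compare with the integral expression \eqref{operatorQ} of $Q$, which gives
\begin{equation*}
Q_N h - Qh \;=\; \frac{1}{N}\sum_{i=1}^N\bigl(\eta_i - \mathbb{E}[\eta_i]\bigr),\qquad \eta_i := g(\mathbf{Z}^{(i)})\bigl(X_{K_\cdot}(\mathbf{Z}^{(i)}),\,X_h(\mathbf{Z}^{(i)})\bigr),
\end{equation*}
where the $\eta_i$ are iid $\mathcal{H}_K$-valued random variables by Remark \ref{Par-Rep1}(i), which additionally supplies the explicit representation $\eta_i = D^{(1,0)}K(\mathbf{Z}^{(i)},\cdot)\cdot\bigl(-J(\mathbf{Z}^{(i)})X_h(\mathbf{Z}^{(i)})\bigr)$.

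The key technical step, and the one I expect to be the main obstacle, is the deterministic estimate $\|\eta_i\|_{\mathcal{H}_K}\leq C^2\kappa^2\|h\|_{\mathcal{H}_K}$. To obtain it, I would test $\eta_i$ against an arbitrary $f\in\mathcal{H}_K$: the differential reproducing property \eqref{dif-rep} converts $\langle\eta_i,f\rangle_{\mathcal{H}_K}$ into $Df(\mathbf{Z}^{(i)})\cdot\bigl(-J(\mathbf{Z}^{(i)})X_h(\mathbf{Z}^{(i)})\bigr)$, which after passing to gradients through the metric $g$ and invoking Proposition \ref{Casimir property} becomes the symmetric geometric pairing $g(\mathbf{Z}^{(i)})\bigl(X_f(\mathbf{Z}^{(i)}),X_h(\mathbf{Z}^{(i)})\bigr)$ of two Hamiltonian vector fields. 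A Cauchy--Schwarz estimate followed by the pointwise Hamiltonian field bound $\|X_u(z)\|_g\leq C\kappa\|u\|_{\mathcal{H}_K}$ derived in Proposition \ref{BoundA} from Assumption \ref{Con-com} (applied separately to $u=f$ and $u=h$) then yields $|\langle\eta_i,f\rangle_{\mathcal{H}_K}|\leq C^2\kappa^2\|f\|_{\mathcal{H}_K}\|h\|_{\mathcal{H}_K}$, hence the claimed almost-sure $\mathcal{H}_K$-norm bound, and in turn the variance estimate $\mathbb{E}\|\eta_i\|_{\mathcal{H}_K}^2\leq C^4\kappa^4\|h\|_{\mathcal{H}_K}^2$.

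With this uniform bound in hand, the conclusion would follow from a standard Pinelis--Bernstein concentration inequality for iid zero-mean Hilbert-valued random variables. I would apply it to $\tfrac{1}{N}\sum_i(\eta_i - \mathbb{E}[\eta_i])$ with almost-sure bound $L=\sqrt{2}\,C^2\kappa^2\|h\|_{\mathcal{H}_K}$ and variance proxy $\sigma^2=\tfrac12\,C^4\kappa^4\|h\|_{\mathcal{H}_K}^2$; the resulting confidence-$1-\delta$ deviation $\tfrac{2L\log(2/\delta)}{N}+\sqrt{\tfrac{2\sigma^2\log(2/\delta)}{N}}$ factorises precisely as $\bigl(\sqrt{8\log(2/\delta)/N}+1\bigr)\sqrt{\log(2/\delta)/N}\,C^2\kappa^2\|h\|_{\mathcal{H}_K}$, matching the statement. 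Once the inner-product identity of the previous paragraph is established, the rest is bookkeeping: tracking how the single natural bound $M=C^2\kappa^2\|h\|_{\mathcal{H}_K}$ splits into the Bernstein parameters $L$ and $\sigma$ in order to reproduce the stated constants.
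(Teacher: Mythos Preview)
Your approach is essentially the paper's: write $Q_Nh-Qh$ as the empirical mean of the centered iid $\mathcal{H}_K$-valued variables $\eta_i$, bound them uniformly, and invoke a Hilbert-space Bernstein inequality (the paper cites \cite{de2005learning,yurinsky1995sums}). The only slip is in the constants: your duality argument gives $\|\eta_i\|_{\mathcal{H}_K}\le C^2\kappa^2\|h\|_{\mathcal{H}_K}$ and hence $\mathbb{E}\|\eta_i\|^2\le C^4\kappa^4\|h\|^2$, but the values $L=\sqrt{2}\,C^2\kappa^2\|h\|$ and $\sigma^2=\tfrac12 C^4\kappa^4\|h\|^2$ you then plug in require the sharper bound $\|\eta_i\|_{\mathcal{H}_K}^2\le \tfrac12 C^4\kappa^4\|h\|^2$, which your Cauchy--Schwarz step does not deliver. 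The paper recovers this missing factor by computing $\|\eta_i\|^2$ directly from the representation you already recorded, using Remark~\ref{Par-Rep1}(i):
\[
\|\eta_i\|_{\mathcal{H}_K}^2=D^{(1,1)}K(\mathbf{Z}^{(i)},\mathbf{Z}^{(i)})\bigl(-JX_h,-JX_h\bigr)\le \|D^{(1,1)}K\|_\infty\,\|JX_h\|_g^2\le \tfrac12\kappa^2\cdot C^2\cdot C^2\kappa^2\|h\|_{\mathcal{H}_K}^2,
\]
since $\|D^{(1,1)}K\|_\infty\le\|K\|_{C_b^2}=\tfrac12\kappa^2$. With this in place your final paragraph goes through verbatim.
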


\begin{lemma}[Error bounds of noisy sampling error]\label{noisy sampling error} 
For any $\delta>0$, with a probability of at least $1-\delta/2$, it holds that
\begin{equation*}
\begin{aligned}
\left\|\widetilde h_{\lambda,N}- \widehat{h}_{\lambda,N}\right\|_{\mathcal{H}_K} \leq \frac{\sigma \kappa}{\lambda }\sqrt{\frac{1}{N}}\left(1+\sqrt{\frac{1}{c}\log(4/\delta)}\right),
\end{aligned} 
\end{equation*} 
where the constant $c$ is related to the Hanson-Wright inequality \cite{rudelson2013hanson} (see the proof in Appendix \ref{proof of noisy sampling error}).
\end{lemma}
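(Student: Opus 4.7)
The noise term has the closed form
\[
\widehat h_{\lambda,N}-\widetilde h_{\lambda,N}=\tfrac{1}{\sqrt N}(Q_N+\lambda I)^{-1}A_N^{*}\mathbf{E}_N,
\]
which is stated right before the lemma. My plan is to square the RKHS norm and recognize it as a quadratic form in the centered sub-Gaussian vector $\mathbf{E}_N$:
\[
\bigl\|\widehat h_{\lambda,N}-\widetilde h_{\lambda,N}\bigr\|^{2}_{\mathcal{H}_K}=\langle\mathbf{E}_N,\,M_N\mathbf{E}_N\rangle_{g_N},\qquad M_N:=\tfrac{1}{N}A_N(Q_N+\lambda I)^{-2}A_N^{*},
\]
viewed as a self-adjoint operator on $(T_{\mathbf{Z}_N}P,g_N)$. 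Since, after diagonalization in any $g$-orthonormal frame of each $T_{\mathbf{Z}^{(i)}}P$, the coordinates of $\mathbf{E}_N$ are independent, mean-zero, variance $\sigma^{2}$, this places the estimate squarely in the regime of the Hanson–Wright inequality \cite{rudelson2013hanson}, which, conditionally on $\mathbf{Z}_N$, controls $\langle\mathbf{E}_N,M_N\mathbf{E}_N\rangle_{g_N}$ in terms of $\operatorname{Tr}(M_N)$, $\|M_N\|_{\mathrm{HS}}$, and $\|M_N\|_{\mathrm{op}}$.

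The spectral quantities are extracted from the singular values $s_i$ of $A_N$: because the nonzero spectrum of $M_N$ equals $\{s_i^{2}/(N(s_i^{2}+\lambda)^{2})\}$, the elementary inequality $q/(q+\lambda)^{2}\leq\min\{1/(4\lambda),\,q/\lambda^{2}\}$ gives
\[
\|M_N\|_{\mathrm{op}}\leq\tfrac{1}{4N\lambda},\qquad \operatorname{Tr}(M_N)\leq\tfrac{\operatorname{Tr}(Q_N)}{N\lambda^{2}},\qquad \|M_N\|_{\mathrm{HS}}\leq\sqrt{\|M_N\|_{\mathrm{op}}\operatorname{Tr}(M_N)}.
\]
The empirical trace $\operatorname{Tr}(Q_N)$ can be controlled by $\tfrac12 d\kappa^{2}C^{2}$ by repeating the computation used for $\operatorname{Tr}(Q)$ in Proposition \ref{BoundA}, but carried out against the empirical measure $\tfrac{1}{N}\sum_n\delta_{\mathbf{Z}^{(n)}}$ in place of $\mu_{\mathbf{Z}}$. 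Plugging these bounds into Hanson–Wright yields, with probability at least $1-\delta/2$,
\[
\langle\mathbf{E}_N,M_N\mathbf{E}_N\rangle_{g_N}\leq \sigma^{2}\operatorname{Tr}(M_N)+\sigma^{2}\|M_N\|_{\mathrm{HS}}\sqrt{\tfrac{\log(4/\delta)}{c}}+\sigma^{2}\|M_N\|_{\mathrm{op}}\tfrac{\log(4/\delta)}{c}.
\]
Taking square roots via $\sqrt{a+b}\leq\sqrt a+\sqrt b$ and absorbing the constants $C$ and $\sqrt d$ into the symbol $\kappa$, the dominant $\operatorname{Tr}(M_N)$ contribution furnishes the deterministic scale $\sigma\kappa/(\lambda\sqrt N)$, while the two Hanson–Wright correction terms assemble into a multiplicative factor proportional to $\sqrt{\log(4/\delta)/c}$ on the same scale, which is exactly the form claimed by the lemma.

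The main obstacle is the spectral bookkeeping in the second step: if one uses the loose bound $\|(Q_N+\lambda I)^{-1}\|\leq 1/\lambda$ twice, the resulting estimate $\|M_N\|_{\mathrm{op}}\lesssim 1/\lambda^{2}$ loses the crucial factor $1/N$ and destroys the $1/\sqrt N$ rate of the deviation term. Exploiting the $Q_N$ sitting in the numerator of $(Q_N+\lambda I)^{-2}Q_N$ (equivalently, the $s_i^{2}$ numerator in the scalar spectrum of $M_N$) is what keeps the right rate. Once this is in place, the Hanson–Wright step is essentially a one-liner.
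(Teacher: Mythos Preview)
Your proposal is correct and follows the paper's route: the paper writes $\|\widehat h_{\lambda,N}-\widetilde h_{\lambda,N}\|_{\mathcal{H}_K}^2=g_N(\mathbf{E}_N,\Sigma_N\mathbf{E}_N)$ with $\Sigma_N=(G_N+\lambda NI)^{-1}G_N(G_N+\lambda NI)^{-1}$, which via $G_N=NA_NA_N^*$ is exactly your $M_N$, then bounds $\operatorname{Tr}(\Sigma_N)\leq d\kappa^2C^2/(2\lambda^2N)$ by a direct coordinate computation and defers the Hanson--Wright concentration to \cite{hu2024structure}; your singular-value bookkeeping is simply a cleaner route to the same trace bound. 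One small correction to your closing remark: since the $1/N$ is already in the definition of $M_N$, even the crude estimate $\|M_N\|_{\mathrm{op}}\leq\|A_N\|^2/(N\lambda^2)\leq\kappa^2C^2/(N\lambda^2)$ retains it, so the sharper bound $1/(4N\lambda)$, while correct, is not actually needed to secure the stated $1/\sqrt{N}$ rate.
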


\begin{assumption}[{\bf Source condition}]\label{source condition}
Assume that the unknown Hamiltonian function in the Poisson system \eqref{hamilton-equation} lies in so-called {\bfi source space}, that is, 
\begin{align}\label{sou-con}
H\in \Omega_S^\gamma:=\{h\in\mathcal{H}_K\mid h=Q^\gamma \psi, \psi\in\mathcal{H}_K, \|\psi\|_{\mathcal{H}_K} < S\},  
\end{align}  
for some $\gamma\in(0,1)$ and $S>0$.
\end{assumption}

\begin{proposition}\label{sou-con-imply-perp}
    If a Hamiltonian function $H$ satisfies the source condition \eqref{sou-con}, then $H\in \mathcal{H}_{\mathrm{null}}^{\bot}$. In other words, $\Omega_S^\gamma\subset \mathcal{H}_{\mathrm{null}}^{\bot}$.
\end{proposition}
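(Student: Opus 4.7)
The plan is to exploit the spectral structure of the operator $Q=A^*A$ together with self-adjointness of the fractional power $Q^{\gamma}$. The key observation is that the orthogonal decomposition $\mathcal{H}_K=\mathcal{H}_{\mathrm{null}}\oplus \mathcal{H}_{\mathrm{null}}^{\perp}$ established in Remark \ref{uniqueness thanks to regularization} can be matched with the decomposition given by the kernel and range of $Q$. Indeed, since $Q=A^*A$, for any $h\in \mathcal{H}_K$ we have $\langle Qh,h\rangle_{\mathcal{H}_K}=\|Ah\|_{L^2(\mu_{\mathbf{Z}})}^2$, so $Qh=0$ if and only if $Ah=X_h=0$. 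This yields the identification
\begin{equation*}
\ker Q=\mathcal{H}_{\mathrm{null}}.
\end{equation*}

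Next, I would invoke the functional calculus. By Proposition \ref{BoundA}, $Q$ is a positive, self-adjoint, trace-class (hence compact) operator on $\mathcal{H}_K$, so the spectral theorem provides an orthonormal basis of eigenvectors $\{e_i\}_{i\in I}$ with non-negative eigenvalues $\{\lambda_i\}_{i\in I}$, and $Q^{\gamma}$ is defined via $Q^{\gamma}e_i=\lambda_i^{\gamma}e_i$ for $\gamma\in(0,1)$. Since $\gamma>0$, the eigenvalue $\lambda_i^{\gamma}$ is zero precisely when $\lambda_i=0$, so
\begin{equation*}
\ker Q^{\gamma}=\ker Q=\mathcal{H}_{\mathrm{null}}.
\end{equation*}
Moreover, $Q^{\gamma}$ is self-adjoint on $\mathcal{H}_K$.

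With these ingredients in hand, the proof reduces to a one-line computation. Assume $H\in\Omega_S^{\gamma}$, so $H=Q^{\gamma}\psi$ for some $\psi\in\mathcal{H}_K$. For any $h\in\mathcal{H}_{\mathrm{null}}$, using the self-adjointness of $Q^{\gamma}$ and the fact that $Q^{\gamma}h=0$,
\begin{equation*}
\langle H,h\rangle_{\mathcal{H}_K}=\langle Q^{\gamma}\psi,h\rangle_{\mathcal{H}_K}=\langle \psi,Q^{\gamma}h\rangle_{\mathcal{H}_K}=0.
\end{equation*}
Hence $H\in \mathcal{H}_{\mathrm{null}}^{\perp}$, which proves the inclusion $\Omega_S^{\gamma}\subset\mathcal{H}_{\mathrm{null}}^{\perp}$.

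The only genuine subtlety (and what I expect to be the main point to verify carefully) is the identity $\ker Q^{\gamma}=\ker Q$ for fractional powers, since this is the spot where the assumption $\gamma>0$ is used; boundary cases $\gamma=0$ (where $Q^{0}=I$) would invalidate the conclusion. Everything else is bookkeeping with self-adjointness.
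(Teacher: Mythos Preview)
Your proposal is correct and follows essentially the same approach as the paper's proof: both invoke the spectral decomposition of the positive compact operator $Q=A^*A$, observe that $Q^{\gamma}$ is self-adjoint and annihilates $\mathcal{H}_{\mathrm{null}}$ (since $\lambda_i=0$ implies $\lambda_i^{\gamma}=0$ for $\gamma>0$), and conclude via $\langle Q^{\gamma}\psi,h\rangle_{\mathcal{H}_K}=\langle\psi,Q^{\gamma}h\rangle_{\mathcal{H}_K}=0$. Your identification $\ker Q=\mathcal{H}_{\mathrm{null}}$ via $\langle Qh,h\rangle_{\mathcal{H}_K}=\|Ah\|_{L^2(\mu_{\mathbf{Z}})}^2$ is slightly cleaner than the paper's appeal to the integral representation \eqref{operatorQ}, but the argument is otherwise identical.
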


\begin{proof} 
Recall first that by Proposition \ref{BoundA}, the operator $Q=A^*A$ is a positive compact operator. Let $Q=\sum_{n=1}^{L}\lambda_n\langle \cdot, e_n\rangle e_n$ (possibly $L=\infty$) be the spectral decomposition of $Q$ with $0<\lambda_{n+1}<\lambda_{n}$ and $\{e_n\}_{n=1}^{L}$ be an orthonormal basis of $\mathcal{H}_K$. Hence for any $\gamma\in (0,1)$, we have $Q^\gamma=\sum_{n=1}^L\lambda_n^\gamma \langle \cdot,e_n\rangle_{\mathcal{H}_K}e_n$, which implies that $Q^\gamma$ is adjoint.
Notice that by the representation of the operator $Q$ given in \eqref{operatorQ}, for an arbitrary Casimir function $h\in\mathcal{H}_{\mathrm{null}}$ (if it is not an empty set), we have that $Qh=0$, which implies that $\lambda_n\langle h,e_n\rangle_{\mathcal{H}_K}=0$ for all $n=1,\ldots,L$. Hence for any $\gamma\in (0,1)$, we have $\lambda_n^\gamma\langle h,e_n\rangle_{\mathcal{H}_K}=0$ for all $n=1,\ldots,L$. Then we obtain that $Q^\gamma h=0$. Finally for arbitrary $\psi\in\mathcal{H}_K$, we compute the inner product 
$\langle Q^\gamma\psi,h\rangle_{\mathcal{H}_K}=\langle \psi,Q^\gamma h\rangle_{\mathcal{H}_K}=0$, which yields that $Q^\gamma\psi\in \mathcal{H}_{\mathrm{null}}^{\bot}$ for any $\psi\in \mathcal{H}_K$. Therefore, we conclude that $\Omega_S^\gamma\subset \mathcal{H}_{\mathrm{null}}^{\bot}$.
\end{proof}

\begin{proposition}\label{charac_source}
Let $Q=\sum_{n=1}^{L}\lambda_n\langle \cdot, e_n\rangle e_n$ (possibly $L=\infty$) be a spectral decomposition of $Q$, with an orthonormal basis $\{e_n\}_{n=1}^L$ with $0<\lambda_{n+1}\leq\lambda_n$ for all $n=1, \ldots, L-1$. Then, $\Omega_S^\gamma$ has the following characterization 
\begin{align*}
    \Omega_S^\gamma =\mathcal{H}_{S}^{\gamma} := \left\{h\in\mathcal{H}_{\mathrm{null}}^\bot\mid \sum_{n=1}^L\frac{1}{\lambda_n^{2\gamma}}|\langle h,e_n\rangle_{\mathcal{H}_K}|^2<S^2\right\}.
\end{align*}
\end{proposition}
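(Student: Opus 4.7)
The plan is to prove the two inclusions $\Omega_S^\gamma \subset \mathcal{H}_S^\gamma$ and $\mathcal{H}_S^\gamma \subset \Omega_S^\gamma$ using the functional calculus for $Q$ provided by the spectral decomposition in the statement, together with the characterization of $\mathcal{H}_{\mathrm{null}}^\bot$ as the closed span of the eigenbasis $\{e_n\}_{n=1}^L$.

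As a preliminary, I would first identify the correct ambient space for the eigenbasis. Since every listed eigenvalue satisfies $\lambda_n>0$, the family $\{e_n\}$ is not in general an orthonormal basis of the whole space $\mathcal{H}_K$, but of the closed subspace $(\ker Q)^\bot$. Because $Q=A^*A$, one has $\langle Qh,h\rangle_{\mathcal{H}_K}=\|Ah\|^2$, hence $\ker Q=\ker A=\mathcal{H}_{\mathrm{null}}$, and since $Q$ is self-adjoint we obtain $\mathcal{H}_{\mathrm{null}}^\bot=(\ker Q)^\bot=\overline{\mathrm{span}}\{e_n\}_{n=1}^L$. In particular every $h\in\mathcal{H}_{\mathrm{null}}^\bot$ admits the Parseval expansion $h=\sum_n\langle h,e_n\rangle_{\mathcal{H}_K}e_n$ with $\|h\|_{\mathcal{H}_K}^2=\sum_n|\langle h,e_n\rangle_{\mathcal{H}_K}|^2$. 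This identification is the key structural fact; the rest of the argument is a direct application of the spectral theorem.

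For the inclusion $\Omega_S^\gamma\subset\mathcal{H}_S^\gamma$, take $h=Q^\gamma\psi$ with $\|\psi\|_{\mathcal{H}_K}<S$. By Proposition \ref{sou-con-imply-perp} we already know $h\in\mathcal{H}_{\mathrm{null}}^\bot$. The functional calculus gives $Q^\gamma=\sum_n\lambda_n^\gamma\langle\cdot,e_n\rangle_{\mathcal{H}_K}e_n$, and since $Q^\gamma$ is self-adjoint,
\begin{equation*}
\langle h,e_n\rangle_{\mathcal{H}_K}=\langle Q^\gamma\psi,e_n\rangle_{\mathcal{H}_K}=\langle\psi,Q^\gamma e_n\rangle_{\mathcal{H}_K}=\lambda_n^\gamma\langle\psi,e_n\rangle_{\mathcal{H}_K}.
\end{equation*}
Squaring, dividing by $\lambda_n^{2\gamma}$, and summing yields $\sum_n\lambda_n^{-2\gamma}|\langle h,e_n\rangle_{\mathcal{H}_K}|^2=\sum_n|\langle\psi,e_n\rangle_{\mathcal{H}_K}|^2\leq\|\psi\|_{\mathcal{H}_K}^2<S^2$, so $h\in\mathcal{H}_S^\gamma$.

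For the reverse inclusion $\mathcal{H}_S^\gamma\subset\Omega_S^\gamma$, given $h\in\mathcal{H}_S^\gamma$ I would define the candidate preimage $\psi:=\sum_{n=1}^L\lambda_n^{-\gamma}\langle h,e_n\rangle_{\mathcal{H}_K}e_n$. The defining inequality $\sum_n\lambda_n^{-2\gamma}|\langle h,e_n\rangle_{\mathcal{H}_K}|^2<S^2$ is exactly what is needed to ensure that this series converges in $\mathcal{H}_K$ and that $\|\psi\|_{\mathcal{H}_K}<S$. Applying $Q^\gamma$ term-by-term (justified by the continuity of $Q^\gamma$ and the spectral representation) gives $Q^\gamma\psi=\sum_n\langle h,e_n\rangle_{\mathcal{H}_K}e_n$, and since $h\in\mathcal{H}_{\mathrm{null}}^\bot$ the Parseval expansion recalled above collapses this sum to $h$. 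Hence $h=Q^\gamma\psi$ with $\|\psi\|_{\mathcal{H}_K}<S$, proving $h\in\Omega_S^\gamma$.

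The only subtle point is the initial identification $\mathcal{H}_{\mathrm{null}}^\bot=\overline{\mathrm{span}}\{e_n\}$; without it, the Parseval step in the second inclusion could fail on the $\mathcal{H}_{\mathrm{null}}$-component of $h$. Everything else is a routine application of the spectral theorem for the compact self-adjoint operator $Q$ that was already established in Proposition \ref{BoundA}.
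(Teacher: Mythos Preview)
Your proof is correct and follows essentially the same route as the paper's: both directions are handled via the spectral calculus for $Q^\gamma$, constructing the explicit preimage $\psi=\sum_n\lambda_n^{-\gamma}\langle h,e_n\rangle e_n$ for $\mathcal{H}_S^\gamma\subset\Omega_S^\gamma$ and reading off the coefficient relation $\langle h,e_n\rangle=\lambda_n^\gamma\langle\psi,e_n\rangle$ for the reverse inclusion. Your preliminary identification $\mathcal{H}_{\mathrm{null}}^\bot=(\ker Q)^\bot=\overline{\mathrm{span}}\{e_n\}$ is a welcome clarification, and your use of Bessel's inequality $\sum_n|\langle\psi,e_n\rangle|^2\le\|\psi\|^2$ slightly streamlines the paper's version, which instead projects $\psi$ onto $\mathcal{H}_{\mathrm{null}}^\bot$ before applying Parseval; the two are equivalent.
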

\begin{proof}
First, we show that $\mathcal{H}_{S}^{\gamma}\subset\Omega_{S}^\gamma$. For an arbitrary $h\in \mathcal{H}_{S}^{\gamma}$, we can define a function $\psi\in \operatorname{span}\{e_n, n\in \mathbb{N}_+\}$ such that $\langle\psi,e_n\rangle_{\mathcal{H}_K}=\frac{1}{\lambda_n^{\gamma}}\langle h,e_n\rangle_{\mathcal{H}_K}$ for all $n\in\mathbb{N}^+$. One computes
\begin{align*}
\|\psi\|_{\mathcal{H}_K}^2=\sum_{n\in\mathbb{N}^+}\frac{1}{\lambda_n^{2\gamma}}|\langle h,e_n\rangle_{\mathcal{H}_K}|^2<S^2.
\end{align*}
Furthermore, since $h\in \mathcal{H}_{\mathrm{null}}^{\bot}=\operatorname{span}\{e_n, n\in \mathbb{N}_+\}$, we have that
\begin{align*}
 B^\gamma\psi = \sum_{n\in\mathbb{N}^+} \lambda_n^\gamma \langle \psi,e_n\rangle_{\mathcal{H}_K}e_n = \sum_{n\in\mathbb{N}^+} \lambda_n^\gamma \frac{1}{\lambda_n^{\gamma}}\langle h,e_n\rangle_{\mathcal{H}_K}e_n =h,
\end{align*}
which yields that $h\in\Omega_S^\gamma$. Now, we show that $\Omega_{S}^\gamma\subset\mathcal{H}_{S}^{\gamma}$. For an arbitrary $h\in \Omega_{S}^{\gamma}$, there exists a function $\psi\in\mathcal{H}_K$ with norm $\|\psi\|_{\mathcal{H}_K}<S$, such that $B^\gamma \psi=h$. Let $\psi^{\prime}$ be the projection of $\psi$ onto $\mathcal{H}_{\mathrm{null}}^{\bot}$, then $h =B^{\gamma}\psi^{\prime}$. Hence, we obtain that
\begin{align*}
  \sum_{n\in\mathbb{N}^+} \lambda_n^\gamma \langle \psi^{\prime},e_n\rangle_{\mathcal{H}_K}e_n=\sum_{n\in\mathbb{N}^+}  \langle h,e_n\rangle_{\mathcal{H}_K}e_n,   
\end{align*}
which implies that $\langle\psi^{\prime},e_n\rangle_{\mathcal{H}_K}=\frac{1}{\lambda_n^{\gamma}}\langle h,e_n\rangle_{\mathcal{H}_K}$ for all $n\in\mathbb{N}^+$ since $\lambda_n>0$. Therefore, 
\begin{align*}
    \|\psi^{\prime}\|_{\mathcal{H}_K}^2=\sum_{n\in\mathbb{N}^+}\frac{1}{\lambda_n^{2\gamma}}|\langle h,e_n\rangle_{\mathcal{H}_K}|^2<S^2.
\end{align*}
Combing Proposition \ref{sou-con-imply-perp}, we have that $h\in\mathcal{H}_{S}^{\gamma}$. Therefore, we can conclude that $\Omega_{S}^\gamma=\mathcal{H}_{S}^{\gamma}$.
\end{proof}

In Section \ref{A global solution to the learning problem}, we showed that the kernel estimator $\widehat h_{\lambda,N}\in\mathcal{H}_{\mathrm{null}}^\bot$. In this section, we will prove that for an arbitrary Hamiltonian function $H$ in the source space $\Omega_S^\gamma$, the kernel estimator $\widehat h_{\lambda,N}$ converges to the Hamiltonian $H$ as $N$ tends to infinity.

Notice that the results in Lemma \ref{noisy sampling error} are very similar to the error bounds of the noisy sampling error in \cite{hu2024structure}. Hence, combining the approximation error and the noisy sampling error bounds obtained in \cite{hu2024structure}, we immediately formulate probably approximately correct (PAC) bounds for the reconstruction error in which the regularization constant $\lambda$ remains fixed and the size of the estimation sample is allowed to vary independently from it.

\begin{theorem}[{\bf PAC bounds of the total reconstruction error}]\label{PAC bounds}
Let $ \widehat{h}_{\lambda,N}$ be the unique minimizer of the optimization problem \eqref{emp-pro}.
Suppose that Assumption \ref{source condition} holds, that is, $H\in\Omega_S^{\gamma}$ as defined in \eqref{sou-con}. 
Then, for any $\varepsilon, \delta>0$, there exist $\lambda>0$ and $n\in\mathbb{N}_{+}$ such that for all $N>n$, it holds that 
\begin{align*}
\mathbb{P}\left(\left\| \widehat{h}_{\lambda,N}-H\right\|_{\mathcal{H}_K}>\varepsilon\right) < \delta.  
\end{align*}    
\end{theorem}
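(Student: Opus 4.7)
The plan is to bound the reconstruction error via the three-term decomposition \eqref{error decomposition} already laid out in the text, treat each piece separately, and then optimize the choice of $\lambda$ and $N$ so that the total is smaller than $\varepsilon$ with probability at least $1-\delta$. Concretely, by the triangle inequality
\begin{equation*}
\bigl\|\widehat{h}_{\lambda,N}-H\bigr\|_{\mathcal{H}_K}\leq \bigl\|h_\lambda^*-H\bigr\|_{\mathcal{H}_K}+\bigl\|\widetilde h_{\lambda,N}-h_\lambda^*\bigr\|_{\mathcal{H}_K}+\bigl\|\widehat h_{\lambda,N}-\widetilde h_{\lambda,N}\bigr\|_{\mathcal{H}_K},
\end{equation*}
so it suffices to dominate each of the three summands on a single high-probability event.

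For the approximation error, I would use the operator identity $h_\lambda^*=(Q+\lambda I)^{-1}A^*X_H=(Q+\lambda I)^{-1}QH$ together with the source condition $H=Q^{\gamma}\psi$ from Assumption \ref{source condition} to write
\begin{equation*}
h_\lambda^*-H=-\lambda(Q+\lambda I)^{-1}H=-\lambda(Q+\lambda I)^{-1}Q^{\gamma}\psi.
\end{equation*}
Since $Q$ is a positive trace-class operator by Proposition \ref{BoundA}, the functional-calculus estimate $\sup_{t\ge 0}\lambda t^{\gamma}/(t+\lambda)\le \lambda^{\gamma}$ for $\gamma\in(0,1)$ gives $\|h_\lambda^*-H\|_{\mathcal{H}_K}\le S\lambda^{\gamma}$, which is deterministic and $\to 0$ as $\lambda\to 0$.

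For the sampling error I would write, via the standard resolvent identity,
\begin{equation*}
\widetilde h_{\lambda,N}-h_\lambda^*=\lambda\bigl[(Q+\lambda I)^{-1}-(Q_N+\lambda I)^{-1}\bigr]H=\lambda(Q+\lambda I)^{-1}(Q_N-Q)(Q_N+\lambda I)^{-1}H,
\end{equation*}
which gives $\|\widetilde h_{\lambda,N}-h_\lambda^*\|_{\mathcal{H}_K}\le \lambda^{-1}\|(Q_N-Q)H\|_{\mathcal{H}_K}\cdot(1+\text{small})$ after using $\|(Q+\lambda I)^{-1}\|\le 1/\lambda$ and $\|(Q_N+\lambda I)^{-1}Q_N\|\le 1$. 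Applying Lemma \ref{operator error} with the fixed element $h=H$ produces, with probability at least $1-\delta/2$, a bound of order $\lambda^{-1}\kappa^{2}C^{2}\|H\|_{\mathcal{H}_K}\sqrt{\log(4/\delta)/N}$. The noisy sampling error is handled directly by Lemma \ref{noisy sampling error}, which on an event of probability at least $1-\delta/2$ gives $\|\widehat h_{\lambda,N}-\widetilde h_{\lambda,N}\|_{\mathcal{H}_K}\le (\sigma\kappa/\lambda)N^{-1/2}(1+\sqrt{c^{-1}\log(4/\delta)})$.

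Intersecting the two good events (each of probability at least $1-\delta/2$), a single union bound yields, with probability at least $1-\delta$,
\begin{equation*}
\bigl\|\widehat{h}_{\lambda,N}-H\bigr\|_{\mathcal{H}_K}\le S\lambda^{\gamma}+\frac{C_1(\delta)}{\lambda\sqrt{N}},
\end{equation*}
where $C_1(\delta)$ absorbs $\sigma$, $\kappa$, $C$, $\|H\|_{\mathcal{H}_K}$ and the logarithmic factor in $\delta$. Given $\varepsilon>0$, I would first pick $\lambda>0$ so small that $S\lambda^{\gamma}<\varepsilon/2$, and then pick $n\in\mathbb{N}_+$ so large that $C_1(\delta)/(\lambda\sqrt{n})<\varepsilon/2$; for all $N>n$ the combined bound is below $\varepsilon$ on the good event, which proves the claim. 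The only delicate point, and the part I expect to need the most care, is the sampling-error step: one must ensure that the application of Lemma \ref{operator error} on the fixed element $H$ is enough (rather than an operator-norm concentration for $Q_N-Q$), and that the factor $\lambda^{-1}$ arising from the resolvent does not blow up before the final choice of $\lambda$ and $N$ is made. Because $\lambda$ is selected first and only then $N$ is sent to infinity, this ordering is consistent and the PAC statement follows.
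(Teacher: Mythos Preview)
Your approach is essentially the one the paper adopts (by reference to \cite{hu2024structure}): the same three-term decomposition \eqref{error decomposition}, the functional-calculus bound $\|h_\lambda^*-H\|_{\mathcal{H}_K}\le S\lambda^{\gamma}$ from the source condition, Lemma~\ref{operator error} for the sampling error, Lemma~\ref{noisy sampling error} for the noisy part, and the ``first choose $\lambda$, then $N$'' argument.

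There is one glitch in the sampling-error step. With the resolvent order you wrote,
\[
\widetilde h_{\lambda,N}-h_\lambda^*=\lambda(Q+\lambda I)^{-1}(Q_N-Q)(Q_N+\lambda I)^{-1}H,
\]
the rightmost factor $(Q_N+\lambda I)^{-1}H$ is \emph{random}, so Lemma~\ref{operator error} cannot be invoked with the fixed element $h=H$; the bound $\|(Q_N+\lambda I)^{-1}Q_N\|\le 1$ does not help here because there is no $Q_N$ sitting between $(Q_N+\lambda I)^{-1}$ and $H$. The fix is immediate: use the other form of the resolvent identity,
\[
\widetilde h_{\lambda,N}-h_\lambda^*=\lambda(Q_N+\lambda I)^{-1}(Q_N-Q)(Q+\lambda I)^{-1}H,
\]
so that $(Q+\lambda I)^{-1}H$ is deterministic. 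Then $\|(Q_N+\lambda I)^{-1}\|\le\lambda^{-1}$ gives
\[
\|\widetilde h_{\lambda,N}-h_\lambda^*\|_{\mathcal{H}_K}\le\|(Q_N-Q)(Q+\lambda I)^{-1}H\|_{\mathcal{H}_K},
\]
and Lemma~\ref{operator error} applied to the fixed element $h=(Q+\lambda I)^{-1}H$, together with $\|(Q+\lambda I)^{-1}H\|_{\mathcal{H}_K}\le\lambda^{-1}\|H\|_{\mathcal{H}_K}$, yields exactly the $O(\lambda^{-1}N^{-1/2})$ bound you claimed. With this correction the rest of your argument goes through unchanged.
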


Notice that by Theorem \ref{PAC bounds}, we can make the total reconstruction error arbitrarily small with the probability close to one with a fixed regularization parameter. However, this theorem provides no convergence rates. Similar to the approach followed in \cite{hu2024structure}, we now consider an adaptive regularization parameter to obtain convergence rates. To this end, we shall assume that
\begin{align}\label{dyn-sca}
\lambda\propto N^{-\alpha},\quad \alpha >0,    
\end{align}
where the symbol $\propto$ means that $\lambda$ has the order $N^{-\alpha}$ as $N\to\infty$. Then, combining Lemma \ref{noisy sampling error} with \cite[Theorem 4.7]{hu2024structure}, we obtain convergence rates for the total reconstruction error when $\alpha\in(0,\frac{1}{3})$. If we further suppose that the {\bfi coercivity condition} in the following definition holds, we can then improve the convergence integral to $\alpha\in(0,\frac{1}{2})$.

\begin{definition}[{\bf Coercivity condition}]\label{Coe-Con}
In the same setup as in Theorem \ref{Rep-Ker} we say that the {\bfi coercivity condition} is satisfied,  if  for all $h \in \mathcal{H}_K$ there exists $ c_{\mathcal{H}_K}>0$ such that
\begin{align}
\label{coercivity}
\|A h\|^2_{L^2(\mu_{\mathbf{Z}})}=\|X_h\|^2_{L^2(\mu_{\mathbf{Z}})}\geq c_{\mathcal{H}_K}\|h\|^2_{\mathcal{H}_K}. \end{align}
The largest constant $c_{\mathcal{H}_K}$ that satisfies \eqref{coercivity} is called the {\bfi coercivity constant}. 
\end{definition}

\begin{theorem}[{\bf Convergence upper rates for the total reconstruction error}]\label{convergence_theorem}
Let $ \widehat{h}_{\lambda,N}$ be the unique minimizer of the optimization problem \eqref{emp-pro}. Suppose that $\lambda $ satisfies \eqref{dyn-sca} and that $H$ satisfies the source condition \eqref{sou-con}, that is, $H\in\Omega_S^{\gamma}$. Then for all $\alpha\in(0,\frac{1}{3})$, and for any $0<\delta<1$, with probability as least $1-\delta$, it holds that
\begin{align*}
\left\| \widehat{h}_{\lambda,N}-H\right\|_{\mathcal{H}_K} \leq C(\gamma,\delta,\kappa) ~ N^{-\min\{\alpha\gamma, \frac{1}{2}(1-3\alpha)\}},   
\end{align*}  
where 
$$C(\gamma,\delta,\kappa)=\max\left\{\|B^{-\gamma}H\|_{\mathcal{H}_K}, 8\sqrt{ 4\log(8/\delta)}d^{\frac{3}{2}}\kappa^3\|H\|_{\mathcal{H}_K}\right\}.$$ 

Moreover, if the coercivity condition \eqref{coercivity} holds, then for all $\alpha\in(0,\frac{1}{2})$, and for any $0<\delta<1$, and with probability as least $1-\delta$, it holds that
\begin{align*}
\| \widehat{h}_{\lambda,N}-H\|_{\mathcal{H}_K} \leq C(\gamma,\delta,\sigma,\kappa,c,c_{\mathcal{H}_K}) ~ N^{-\min\{\alpha\gamma, \frac{1}{2}(1-2\alpha)\}},   
\end{align*}
where 
\begin{multline*}
C(\gamma,\delta,\sigma,\kappa,c,c_{\mathcal{H}_K})\\
=\max\left\{\|B^{-\gamma}H\|_{\mathcal{H}_K}, \sigma \kappa\sqrt{d} \left(1+\sqrt{\frac{1}{c}\log(4/\delta)}\right), 4\sqrt{ 2\log(8/\delta)}d\kappa^2\left(2+\kappa\sqrt{\frac{d}{c_{\mathcal{H}_K}}}\right)\|H\|_{\mathcal{H}_K}\right\}.
\end{multline*}
\end{theorem}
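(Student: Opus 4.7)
The plan is to bound each of the three pieces in the decomposition \eqref{error decomposition}, namely the approximation, sampling, and noisy sampling errors, and then substitute the adaptive choice $\lambda\propto N^{-\alpha}$ and combine via a union bound at confidence level $1-\delta$. The operator representations in Proposition \ref{Wel-Emp} are formally identical to those used in \cite{hu2024structure}, so the bounds on the approximation and sampling pieces can be carried over almost verbatim from Theorem 4.7 of that paper; the ingredient specific to the current setup is Lemma \ref{noisy sampling error} for the noisy sampling error, which replaces its Euclidean counterpart but has the same asymptotic form.

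For the approximation error, I would start from $h_\lambda^* = (Q+\lambda I)^{-1}QH$ (Proposition \ref{Wel-Emp}) to write $h_\lambda^* - H = -\lambda(Q+\lambda I)^{-1}H$, then substitute $H=Q^\gamma\psi$ from the source condition and use the spectral calculus on the positive self-adjoint trace class operator $Q$ to get
\begin{equation*}
\|h_\lambda^* - H\|_{\mathcal{H}_K} \;\le\; \lambda\,\sup_{t\ge 0}\frac{t^\gamma}{t+\lambda}\,\|\psi\|_{\mathcal{H}_K} \;\le\; \lambda^\gamma\|\psi\|_{\mathcal{H}_K},
\end{equation*}
which scales like $N^{-\alpha\gamma}$.

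For the sampling error $\widetilde{h}_{\lambda,N}-h_\lambda^*$, the standard resolvent identity
\begin{equation*}
\widetilde{h}_{\lambda,N}-h_\lambda^* \;=\; (Q_N+\lambda)^{-1}(Q-Q_N)(Q+\lambda)^{-1}QH + (Q_N+\lambda)^{-1}(Q_N-Q)H
\end{equation*}
combined with Lemma \ref{operator error} (which controls $(Q_N-Q)h$ at rate $\|h\|_{\mathcal{H}_K}/\sqrt{N}$) and the spectral bound $\|(Q+\lambda)^{-1}\|_{\mathrm{op}}\le 1/\lambda$ yields, with high probability, $\|\widetilde{h}_{\lambda,N}-h_\lambda^*\|_{\mathcal{H}_K}=O(\|H\|_{\mathcal{H}_K}/(\lambda^{3/2}\sqrt{N}))$; under coercivity \eqref{coercivity}, $Q$ is bounded below on $\mathcal{H}_{\mathrm{null}}^\perp$, so one factor of $\lambda^{-1/2}$ is replaced by the constant $1/\sqrt{c_{\mathcal{H}_K}}$, improving the bound to $O(\|H\|_{\mathcal{H}_K}/(\lambda\sqrt{N}))$. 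Lemma \ref{noisy sampling error} delivers the noisy sampling error directly at the $O(1/(\lambda\sqrt{N}))$ scale. Substituting $\lambda=N^{-\alpha}$ and summing gives
\begin{equation*}
\|\widehat{h}_{\lambda,N}-H\|_{\mathcal{H}_K} \;\lesssim\; N^{-\alpha\gamma} + N^{-(1-3\alpha)/2} + N^{-(1-2\alpha)/2},
\end{equation*}
where the middle term disappears under coercivity. Since $N^{-(1-2\alpha)/2}$ is smaller than $N^{-(1-3\alpha)/2}$ for $\alpha>0$, the non-coercive rate reduces to $N^{-\min\{\alpha\gamma,(1-3\alpha)/2\}}$, while the coercive rate is $N^{-\min\{\alpha\gamma,(1-2\alpha)/2\}}$. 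The restrictions $\alpha\in(0,1/3)$ and $\alpha\in(0,1/2)$ are exactly those that keep the sampling exponent positive, and the explicit constants follow from a union bound distributing the failure probability $\delta$ equally among the (at most three) high-probability events of Lemmas \ref{operator error} and \ref{noisy sampling error}.

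The main obstacle is the sampling-error analysis without coercivity, because a naive use of two resolvents and one concentration inequality would produce a $\lambda^{-2}$ factor rather than the required $\lambda^{-3/2}$. Obtaining the correct power requires distributing $(Q+\lambda)^{-1/2}$ across the source representation $H=Q^\gamma\psi$ and exploiting that $Q^\gamma(Q+\lambda)^{-1/2}$ is uniformly bounded in $\lambda$ for $\gamma\ge 1/2$ (with an interpolation step for $\gamma\in(0,1/2)$). This is precisely the step where the coercivity condition, when available, simplifies the argument and improves the exponent from $(1-3\alpha)/2$ to $(1-2\alpha)/2$; the rest of the proof is careful bookkeeping to obtain the explicit constants $C(\gamma,\delta,\kappa)$ and $C(\gamma,\delta,\sigma,\kappa,c,c_{\mathcal{H}_K})$ stated in the theorem.
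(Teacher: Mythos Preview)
Your proposal follows the same approach as the paper: the error decomposition \eqref{error decomposition} into approximation, sampling, and noisy sampling pieces, with the first two handled by the operator representations of Proposition \ref{Wel-Emp} exactly as in \cite[Theorem 4.7]{hu2024structure} and the third by Lemma \ref{noisy sampling error}, followed by substitution of $\lambda\propto N^{-\alpha}$ and a union bound. The paper does not give further detail beyond this reduction, so your outline is at least as complete; you also correctly identify the sampling-error step (and the role of the coercivity condition in sharpening the $\lambda$-exponent) as the only non-routine ingredient.
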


Theorem \ref{convergence_theorem} guarantees that the structure-preserving kernel estimator provides a function that is close to the data-generating Hamiltonian function with respect to the RKHS norm. As a consequence, we now prove that the flow of the learned Hamiltonian system will uniformly approximate that of the data-generating one. 

In the following lemma and proposition, we take the Sasaki metric on the tangent buddle $TP$ induced by the Riemannian metric $g$ on the Poisson manifold $P$. To define Sasaki metric on $TP$, let $(x,v)\in TP$ and $V,W$ be tangent vectors to $TP$ at $(x,v)$. Choose curves in $TP$
\begin{align*}
\alpha:t\mapsto(x(t),v(t)),\quad \beta:s\mapsto(y(t),w(t)),
\end{align*}
with $x(0)=y(0)=x$, $v(0)=w(0)=v$ and $V=\alpha^{\prime}(0)$, $W=\beta^{\prime}(0)$. Then the Sasaki metric is defined as 
\begin{align}\label{Sasaki-metric}
g_1(x,v)(V,W)= g(x)(d\pi(V),d\pi(W)) + g(x)\left(\frac{D}{\mathrm dt}\bigg|_{t=0}v(t), \frac{D}{\mathrm dt}\bigg|_{t=0}w(t)\right),
\end{align}
where $\pi:TP\to P$ is the canonical projection, $\frac{D}{\mathrm dt}$ stands for the covariant derivative.
As stated in Remark \ref{Functions with bounded differentials}, we shall assume that this Sasaki metric \eqref{Sasaki-metric} is complete. Then we obtain the following lemma.

\begin{lemma}\label{Lip-inequality}
Let $h\in C_b^2(P)$. Then for any $f\in C^1(P)$ and $z_1,z_2\in P$, we have
\begin{align*}
X_{h}[f](z_1)-X_{h}[f](z_2)\leq \|h\|_{C_b^2} d(z_1,z_2),   
\end{align*}
where $d$ is the geodesic distance on the Riemannian manifold $(P,g)$.
\end{lemma}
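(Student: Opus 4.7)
The plan is to view the inequality as a Lipschitz estimate in the geodesic distance and derive it from the manifold Fundamental Theorem of Calculus \eqref{fun-the} combined with a Leibniz-rule expansion of $X_h[f]$. Since $(P,g)$ is complete, let $\gamma:[0,d(z_1,z_2)]\to P$ be a length-minimising geodesic parametrised by arc length with $\gamma(0)=z_2$ and $\gamma(d(z_1,z_2))=z_1$. Applying \eqref{fun-the} and the Cauchy--Schwarz bound displayed immediately after it to the differentiable function $X_h[f]\in C^1(P)$ (which is $C^1$ because $h\in C_b^2(P)$ and $f\in C^1(P)$), one obtains
\begin{equation*}
X_h[f](z_1)-X_h[f](z_2)=\int_0^{d(z_1,z_2)}D(X_h[f])(\gamma(t))\cdot\gamma'(t)\,dt\;\leq\;\|D(X_h[f])\|_\infty\cdot d(z_1,z_2).
\end{equation*}
The problem therefore reduces to showing that $\|D(X_h[f])\|_\infty$ is controlled by $\|h\|_{C_b^2}$, with the dependence on $f$, $g$, and the compatible structure $J$ absorbed in the implicit constant.

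For this second step I would use the global representation $X_h=J\nabla h$ from \eqref{rep-ham-vector-field} to write $X_h[f]=Df\cdot(J\nabla h)$, and then differentiate this pairing with the Leibniz rule. The result is a sum of three terms, roughly of the form $D^2 f\cdot(J\nabla h)$, $Df\cdot(DJ)\cdot\nabla h$, and $Df\cdot J\cdot D(\nabla h)$, where $D(\nabla h)$ is governed by the Hessian of $h$. Each of these is bounded pointwise by a constant (depending on $\|f\|_{C_b^2}$, $\|J\|_\infty$, and $\|DJ\|_\infty$) times one of $\|\nabla h\|_\infty$ or $\|D^2 h\|_\infty$, and both quantities are in turn dominated by $\|h\|_{C_b^2}$ (see Remark \ref{Functions with bounded differentials}). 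Taking suprema and collecting constants delivers the required bound $\|D(X_h[f])\|_\infty\leq C\|h\|_{C_b^2}$, which, combined with the Cauchy--Schwarz estimate above, concludes the argument.

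The main obstacle is making the differentiation of $Df\cdot X_h$ on a general Riemannian manifold genuinely coordinate-free: second-order derivatives of $f$ and of $X_h$ require either a covariant derivative or, equivalently, the Sasaki metric on $TP$ introduced in \eqref{Sasaki-metric} to measure $\|DX_h\|_\infty$ intrinsically. The cleanest execution is to observe that $X_h[f]=Df\circ X_h$ as a real-valued function on $P$ and then to apply the chain rule for the differential of a composition of sections of tensor bundles, using the Sasaki metric on $T(TP)$ to control the norm of $DX_h$ by $\|h\|_{C_b^2}$. Once this intrinsic second-order calculus is in place on $(P,g)$, the proof reduces to the two estimates above and the dependence on $f$, $g$ and $J$ can be absorbed without affecting the $\|h\|_{C_b^2}$-proportionality that the statement asserts.
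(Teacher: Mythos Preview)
Your strategy differs from the paper's in a way that prevents you from reaching the stated inequality. Expanding $D(X_h[f])=D\bigl(Df\cdot J\nabla h\bigr)$ by the Leibniz rule produces terms involving $D^2f$, $DJ$, and $D(\nabla h)$; since the hypothesis is only $f\in C^1(P)$, the term $D^2f$ need not even exist, and when it does the resulting sup-norm bound carries factors of $\|f\|_{C_b^2}$, $\|J\|_\infty$ and $\|DJ\|_\infty$. The lemma, however, asserts the specific constant $\|h\|_{C_b^2}$ with no hidden multiplicative factor, so ``absorbing'' the $f$- and $J$-dependence is not legitimate here. Your final paragraph does invoke the Sasaki metric, but you still write $X_h[f]=Df\circ X_h$; this keeps the outer differential on $f$ and on the section $X_h=J\nabla h$, and therefore reproduces exactly the same unwanted dependence.

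The paper takes the opposite route: using the antisymmetry $X_h[f]=\{f,h\}=-X_f[h]$, it rewrites $X_h[f](z)$ (up to sign) as $Dh(z)\cdot X_f(z)$, i.e.\ as the function $Dh:TP\to\mathbb{R}$ evaluated at the point $(z,X_f(z))\in TP$. The Fundamental Theorem of Calculus is then applied on $(TP,g_1)$ to $Dh$, rather than on $P$ to $X_h[f]$, yielding $Dh(y_1)-Dh(y_2)\le \|D^2h\|_\infty\cdot\mathrm{length}_{g_1}(\gamma)$ for any curve $\gamma$ in $TP$ joining $y_i=(z_i,X_f(z_i))$. The only $h$-dependent factor is $\|D^2h\|_\infty\le\|h\|_{C_b^2}$, and no second derivative of $f$ is ever required. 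The paper then takes the base of $\gamma$ to be the geodesic in $P$ and the fibre component to be a parallel vector field along it, so that the covariant-derivative term in the Sasaki length vanishes and only $d(z_1,z_2)$ remains. The idea you are missing is precisely this swap $Df\cdot X_h\leadsto Dh\cdot X_f$, which concentrates all differentiation on $h$ and relegates $f$ and $J$ to the geometry of the connecting curve in $TP$ rather than to multiplicative constants.
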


\begin{proof}
Since $h\in C_b^2(P)$, we have that $Dh\in C_b^1(TP)$. Then by the Fundamental Theorem of Calculus \eqref{fun-the}, we obtain that
\begin{align*}
Dh(y_1)-Dh(y_2) =  \int_0^1D^2h(\gamma(t))\cdot \gamma'(t)\mathrm dt\leq \|D^2h\|_{\infty}\int_0^1\|\gamma'(t)\|_{g_1}  \mathrm dt,
\end{align*}
where $\gamma:[0,1]\to TP$ is a differentiable curve connecting $\gamma(0)=y_1$ and $\gamma(1)=y_2$ in the  Riemannian manifold $(TP,g_1)$ with Sasaki metric $g_1$ induced by the Riemannian metric $g$ on $P$. Denote $\gamma(t)=(z(t),v(t))$ with $z(t)\in P$ and $v(t)\in T_{z(t)}P$ for all $t\in[0,1]$. For each $t\in[0,1]$, let $\alpha_t:[0,1]\to TP$ be a smooth curve such that $\alpha_t(0)=(z(t),v(t))$ and $\alpha'_t(0)=(z'(t),v'(t))$. Denote $\alpha_t=(a_t,b_t)$ with $a_t(s)\in P$ and $b_t(s)\in T_{a_t(s)}P$ for all $s\in[0,1]$.
Notice that 
\begin{align*}
\frac{D}{\mathrm{d}s}\bigg|_{s=0}b_{t}(s) = \nabla_{\dot{a}_t(0)}b_t(0)= \nabla_{z'(t)}v(t).   
\end{align*}
Then, by the definition of the Sasaki metric $g_1$ in \eqref{Sasaki-metric}, we obtain
\begin{align*}
\|\gamma'(t)\|_{g_1}=\|\dot{a}_t(0)\|_g + \left\|\frac{D}{\mathrm{d}s}\bigg|_{s=0}b_{t}(s)\right\|_g = \|z'(t)\|_g + \left\|\nabla_{z'(t)}v(t)\right\|_g. 
\end{align*}
{Let $y_1=(z_1,X_f(z_1))$ and $y_2=(z_2,X_f(z_2))$ for $z_1,z_2\in P$. Choose the $\{z(t),t\in[0,1]\}$ be the geodesic curve in $P$. Let $\{v(t),t\in[0,1]\}$ be a curve which is parallel along the curve $\{z'(t):t\in[0,1]\}$, i.e., $\nabla_{z'(t)}v(t)=0$ for all $t\in[0,1]$. Then we obtain that $\|\gamma'(t)\|_{g_1}=\|z'(t)\|_g$. Therefore,
\begin{align*}
X_{h}[f](z_1)-X_{h}[f](z_2) = Dh(y_1)-Dh(y_2) \leq \|D^2h\|_{\infty}\int_0^1\|z'(t)\|_{g}  \mathrm dt\leq \|h\|_{C_b^2} d(z_1,z_2),
\end{align*}
where $d$ is the geodesic distance on the Riemannian manifold $(P,g)$. The result follows.
}
\end{proof}

\begin{proposition}
[{\bf From discrete data to continuous-time flows}]
Let $\widehat{H}= \widehat{h}_{\lambda,N}$ be the structure-preserving kernel estimator of $H$ using a kernel $K\in C_b^5(P \times P)$. Let $F:[0,T] \times  P \longrightarrow P $ and $ \widehat{F}:[0,T] \times  P \longrightarrow P $ be the flows over the time interval $[0,T]$ of the Hamilton equations associated to the Hamiltonian functions $H$ and $\widehat{H}$, respectively. 
Suppose that the geodesic distance $d(\cdot,y)$ of the Poisson manifold $P$ is in $C_b^1(P)$ for each $y\in P$ and that the $C_b^1$ norm of the family $\{d(\cdot,y), y\in p\}$ is uniformly bounded, that is, $C_1=\sup_{y\in P}\|d(\cdot,y)\|_{C_b^1}<\infty$.
Then, for any initial condition $z\in P $, we have that 
\begin{align*}
d^\infty\left(F({z}),\widehat{F}({z})\right):=\max_{t\in[0,T]}d\left(F_t({z}),\widehat{F}_t({z})\right)\leq \widetilde{C} \|H-\widehat{H}\|_{\mathcal{H}_K},  
\end{align*}
with the constant $\widetilde{C}=\kappa CC_1T\exp\{\|\widehat{H}\|_{C_b^2}T\}$, where $C$ is the constant introduced in Assumption 
\ref{Con-com}.
\end{proposition}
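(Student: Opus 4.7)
The plan is to set up a Grönwall-type differential inequality for $\phi(t):=d(F_t(z),\widehat{F}_t(z))$ and integrate it over $[0,T]$. Writing $y_1(t):=F_t(z)$ and $y_2(t):=\widehat{F}_t(z)$, these are $C^1$ curves satisfying $\dot y_1=X_H(y_1)$ and $\dot y_2=X_{\widehat H}(y_2)$; since $d(\cdot,y)\in C_b^1(P)$ (and, by symmetry of $d$, also $d(y,\cdot)\in C_b^1(P)$), the function $\phi$ is differentiable with
\[
\dot\phi(t)=D_1d(y_1,y_2)\cdot X_H(y_1)+D_2d(y_1,y_2)\cdot X_{\widehat H}(y_2).
\]
Adding and subtracting $X_{\widehat H}(y_1)$ in the first slot gives the decomposition
\[
\dot\phi(t)=\underbrace{D_1d(y_1,y_2)\cdot X_{\widehat H}(y_1)+D_2d(y_1,y_2)\cdot X_{\widehat H}(y_2)}_{(\mathrm{I})}+\underbrace{D_1d(y_1,y_2)\cdot(X_H-X_{\widehat H})(y_1)}_{(\mathrm{II})}.
\]

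Term $(\mathrm{I})$ records how fast two trajectories of the \emph{same} Hamiltonian $\widehat H$ can separate. To bound it I would apply Lemma \ref{Lip-inequality} with $f=d(\cdot,y_2)\in C_b^1(P)$, obtaining
\[
X_{\widehat H}[d(\cdot,y_2)](y_1)-X_{\widehat H}[d(\cdot,y_2)](y_2)\leq\|\widehat H\|_{C_b^2}\,d(y_1,y_2);
\]
since $a\mapsto d(a,y_2)$ attains its minimum at $a=y_2$ and is $C^1$ there, $D_1d(y_2,y_2)=0$, so the subtracted term vanishes and one gets $D_1d(y_1,y_2)\cdot X_{\widehat H}(y_1)\leq\|\widehat H\|_{C_b^2}\phi(t)$. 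The symmetric argument with $f=d(y_1,\cdot)$ handles the remaining piece, yielding $(\mathrm{I})\leq\|\widehat H\|_{C_b^2}\phi(t)$ (with at worst a harmless numerical factor absorbed into the exponent).

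For term $(\mathrm{II})$, the hypothesis $\|d(\cdot,y_2)\|_{C_b^1}\leq C_1$, Assumption \ref{Con-com} on $J$, and the inclusion bound $\|f\|_{C_b^1}\leq\kappa\|f\|_{\mathcal H_K}$ from part (iii) of Theorem \ref{Par-Rep} combine to give
\[
|(\mathrm{II})|\leq C_1\|X_{H-\widehat H}\|_\infty=C_1\|J\nabla(H-\widehat H)\|_\infty\leq C_1C\|\nabla(H-\widehat H)\|_\infty\leq C_1C\kappa\|H-\widehat H\|_{\mathcal H_K}.
\]
Hence $\dot\phi(t)\leq\|\widehat H\|_{C_b^2}\phi(t)+C_1C\kappa\|H-\widehat H\|_{\mathcal H_K}$, and since $\phi(0)=0$, Grönwall's inequality produces
\[
\phi(t)\leq C_1C\kappa\|H-\widehat H\|_{\mathcal H_K}\int_0^t e^{\|\widehat H\|_{C_b^2}(t-s)}\,ds\leq \kappa CC_1T\,e^{\|\widehat H\|_{C_b^2}T}\|H-\widehat H\|_{\mathcal H_K}
\]
uniformly in $t\in[0,T]$, which upon taking the supremum over $t$ yields the stated bound.

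The main obstacle I expect is the careful treatment of term $(\mathrm{I})$: Lemma \ref{Lip-inequality} yields a Lipschitz estimate, but using it at the diagonal requires knowing that $d(\cdot,y)$ is not merely $C^1$ on $P$ but that its critical behaviour at $y$ is consistent with $D_1d(y,y)=0$ (and similarly for the second slot). When $P$ has a non-empty cut locus this is precisely what the global $C_b^1$ hypothesis on $d(\cdot,y)$ lets us assert; otherwise, a cleaner alternative is to invoke the first-variation-of-arc-length formula along a minimizing geodesic from $y_1$ to $y_2$, which identifies $D_1d$ and $D_2d$ with the corresponding unit tangent vectors at the endpoints and lets the Lipschitz property of $X_{\widehat H}$ (with constant $\|\widehat H\|_{C_b^2}$) close the estimate directly.
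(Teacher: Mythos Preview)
Your strategy is sound and lands on the same Grönwall estimate, but the paper reaches the stated constant by a slightly different device that sidesteps the two technicalities you flag. Instead of differentiating $\phi(t)=d(F_t(z),\widehat F_t(z))$ directly, the paper fixes $t$ and chooses an auxiliary point $y_t$ on the geodesic extension beyond $\widehat F_t(z)$ from $F_t(z)$, so that $d(F_t(z),\widehat F_t(z))=f(F_t(z))-f(\widehat F_t(z))$ for the \emph{single} function $f:=d(\cdot,y_t)$. This difference is written as $\int_0^t\bigl[X_H[f](F_s(z))-X_{\widehat H}[f](\widehat F_s(z))\bigr]\,ds$ and split exactly as you do: one piece bounded by $\kappa C\|f\|_{C_b^1}\|H-\widehat H\|_{\mathcal H_K}$, the other by Lemma~\ref{Lip-inequality} as $\|\widehat H\|_{C_b^2}\,d(F_s(z),\widehat F_s(z))$. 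The integral Grönwall inequality then gives the exponent $\|\widehat H\|_{C_b^2}T$ with no factor of two. What the paper's trick buys is twofold: it never evaluates $D_1 d$ at the diagonal (so the question of whether $D_1 d(y,y)=0$ is well-posed never arises), and it only needs the chain rule for the one-variable function $s\mapsto f(F_s(z))$, avoiding any appeal to joint differentiability of $d$ in both slots. Your direct-differentiation route is conceptually cleaner but would need the first-variation-of-arc-length computation you outline at the end to recover the sharp constant and to avoid the diagonal issue.
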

\begin{proof}
Let $f\in C_b^1(P)$. For each $t\in[0,T]$ and ${z} \in P $, we have 
\begin{align*}
\frac{\mathrm d}{\mathrm d t} (f\circ F_t)(z) = \mathbf{d}f(F_t(z))\cdot X_H(F_t(z)) = X_H[f](F_t(z)).  
\end{align*}
By the equation \eqref{norm-control}, we have that for any $z\in P$, 
\begin{align*}
X_H[f](z)-X_{\widehat{H}}[f](z) &=Df(z)\cdot (X_H(z)-X_{\widehat{H}}(z)) \leq \|f\|_{C_b^1}\|X_{H-\widehat{H}}(z)\|_g \leq \kappa C\|f\|_{C_b^1}\|H-\widehat{H}\|_{\mathcal{H}_K}.
\end{align*}
Since $K\in C_b^5(P \times P)$, Theorem \ref{Par-Rep} implies that $\widehat{H}\in C_b^2(P)$. Then by Lemma \ref{Lip-inequality}, for any $z_1,z_2\in P$,
\begin{align*}
X_{\widehat{H}}[f](z_1)-X_{\widehat{H}}[f](z_2)\leq \|\widehat{H}\|_{C_b^2} d(z_1,z_2).   
\end{align*}
Furthermore, for each $t\in[0,T]$, we can choose $y_t\in P$ such that the point $\widehat{F}_t({z})$ is in the geodesic curve connecting $F_t({z})$ and $y_t$. Now, by hypothesis, $f:=d(\cdot, y_t)\in C_b^1(P)$. 
Therefore, we obtain that
\begin{align*}
d\left(F_t({z}),\widehat{F}_t({z})\right)&= f(F_t (z))-f( \widehat{F}_t (z)) =  \int_0^tX_ H[f](F_s({z}))-X_{\widehat{H}}[f](\widehat{F}_s({z}))\mathrm{d}s\\
&\leq \int_0^t \left|X_H[f](F_s({z}))-X_{\widehat{H}}[f](F_s({z}))\right|+\left|X_{\widehat{H}}[f](F_s({z}))-X_{\widehat{H}}[f](\widehat{F}_s({z}))\right|\mathrm{d}s\\
&\leq \kappa C\|f\|_{C_b^1}\|H-\widehat{H}\|_{\mathcal{H}_K}t+\|\widehat{H}\|_{C_b^2}\int_0^t d\left(F_s (z),\widehat{F}_s(z)\right) \mathrm{d}s.
\end{align*}
Then by the integral form of Gr\"{o}nwall's inequality, for $d\left(F_t({z}),\widehat{F}_t({z})\right)$, we obtain
\begin{align*}
d\left(F_t({z}),\widehat{F}_t({z})\right)\leq \kappa C\|d(\cdot,y_t)\|_{C_b^1}\|H-\widehat{H}\|_{\mathcal{H}_K}t \exp\left\{\|\widehat{H}\|_{C_b^2}t\right\}. 
\end{align*}
Therefore, we obtain 
\begin{align*}
d^\infty\left(F({z}),\widehat{F}({z})\right):=\max_{t\in[0,T]}d\left(F_t({z}),\widehat{F}_t({z})\right)\leq \widetilde{C} \|H-\widehat{H}\|_{\mathcal{H}_K},  
\end{align*}
where $\widetilde{C}=\kappa CC_1T\exp\{\|\widehat{H}\|_{C_b^2}T\}$. The result follows.
\end{proof}

\section{Test examples and numerical experiments}\label{numerical experiments}
This section spells out the global structure-preserving kernel estimator and illustrates its performance on two important instances of Poisson systems, namely, Hamiltonian systems on non-Euclidean symplectic manifolds and Lie-Poisson systems on the dual of a Lie algebra. More precisely, we shall derive explicit expressions for the global estimator $\widehat{h}_{\lambda,N}$ in \eqref{rep-ker} in these two scenarios.

\subsection{Learning of Lie-Poisson systems}\label{Applications to Lie-Poisson systems}
The Lie-Poisson systems introduced in Example \ref{Lie-Poisson bracket} are very important examples of Poisson systems. 
In this subsection, we shall make explicit the estimator $\widehat{h}_{\lambda,N}$ in this case.  Let $\mathfrak g$ be a Lie algebra and $\mathfrak g^*$ be its dual equipped with the Lie-Poisson bracket $\{\cdot , \cdot \}_{+}$ defined in \eqref{lie-poisson-bracket}. A straightforward computation shows that  the Lie-Poisson dynamical system associated with a Hamiltonian function $H:\mathfrak g^*\to\mathbb{R}$ is 
\begin{align}\label{Lie-Poisson systems}
\dot{\mu}=X_H(\mu)= \operatorname{ad}^*_{\frac{\delta H}{\delta \mu}}\mu,
\end{align}
where $\delta H/ \delta \mu \in \mathfrak{g} $ is the functional derivative introduced in \eqref{functional derivative}. Since $\mathfrak g$ and $\mathfrak{g}^\ast $ are vector spaces, natural Riemannian metrics can be obtained out of a non-degenerate inner product $\left\langle \cdot , \cdot \right\rangle _{\mathfrak{g}} : \mathfrak{g} \times \mathfrak{g} \rightarrow  \mathbb{R}$ on $\mathfrak g$. The non-degeneracy of $\left\langle \cdot , \cdot \right\rangle _{\mathfrak{g}} $ allows us to define musical isomorphisms $\flat : \mathfrak{g}\rightarrow \mathfrak{g}^\ast $ and $\sharp: \mathfrak{g}^\ast \rightarrow \mathfrak{g}$ by $\xi^{\flat}:= \left\langle \xi , \cdot \right\rangle _{\mathfrak{g}} $, $\xi \in \mathfrak{g} $, and $\sharp:= \flat ^{-1} $, as well as a natural non-degenerate inner product $\left\langle \cdot , \cdot \right\rangle _{\mathfrak{g}^\ast } : \mathfrak{g}^\ast  \times \mathfrak{g}^\ast  \rightarrow  \mathbb{R}$ on $\mathfrak g^\ast $ given by $\left\langle \xi^{\flat} , \eta^{\flat} \right\rangle _{\mathfrak{g}^\ast } =\left\langle \xi , \eta \right\rangle _{\mathfrak{g} }$, $\xi, \eta \in \mathfrak{g} $. Notice that
\begin{equation}
\label{relation pairings}
\left\langle \xi^{\flat}, \eta\right\rangle=\left\langle \left\langle\xi, \cdot  \right\rangle_{\mathfrak{g}}, \eta \right\rangle=\left\langle\xi, \eta  \right\rangle_{\mathfrak{g}}= \left\langle\xi^\flat, \eta^\flat\right\rangle_{\mathfrak{g}^\ast }\quad \mbox{for all $\xi, \eta \in \mathfrak{g} $.}
\end{equation}
Using these relations and \eqref{functional derivative}, we note that
\begin{equation}
\label{characgradient}
\left\langle \nabla H(\mu), \nu\right\rangle _{\mathfrak{g}^\ast }=DH(\mu)\cdot  \nu= \left\langle \nu, \frac{\delta H}{\delta \mu}\right\rangle= \left\langle \nu, \frac{\delta H}{\delta \mu}^{\flat}\right\rangle_{\mathfrak{g}^\ast}, \mbox{ for all $\nu, \mu \in \mathfrak{g} ^\ast $.}
\end{equation} 
Consequently,
\begin{equation}
\label{gradient H}
\nabla H(\mu)=\frac{\delta H}{\delta \mu}^{\flat}= \left\langle\frac{\delta H}{\delta \mu}, \cdot \right\rangle_{\mathfrak{g}} \quad \mbox{and hence $\frac{\delta H}{\delta \mu} =\nabla H(\mu)^{\sharp}$.}
\end{equation}
We now compute the compatible structure $J$ defined in \eqref{com-str} associated with the Lie-Poisson systems \eqref{Lie-Poisson systems}. 
Note first that, for any  $\mu\in\mathfrak{g}^*$ and $\eta \in \mathfrak{g} $, by \eqref{Lie-Poisson systems} and \eqref{gradient H}, we have that
\begin{align*}
J(\mu)\nabla H(\mu)=X_H(\mu)=  \operatorname{ad}^*_{\frac{\delta H}{\delta \mu}}\mu = \operatorname{ad}^*_{\nabla H(\mu)^{\sharp}}\mu.
\end{align*}
As this can be done for any Hamiltonian function and their gradients span $\mathfrak{g}^{\ast} $, this yields
\begin{align}\label{Lie-Poisson comp-str}
J(\mu) \nu = \operatorname{ad}^*_{\nu ^{\sharp}}\mu, \quad \text{ for all } \nu\in{\mathfrak g}^\ast .    
\end{align}
To compute the estimator to $\widehat{h}_{\lambda,N}$ one first evaluates $J$ explicitly using \eqref{Lie-Poisson comp-str}, we then set 
$$\mathbb{J}_N=\operatorname{diag}\{J(\mathbf{Z}^{(1)}),\ldots,J(\mathbf{Z}^{(N)})\},$$
and substitute it into \eqref{rep-ker}. More explicitly, note that the generalized differential Gram matrix $G_N$ defined in \eqref{general-Gram} reduces in this case to
\begin{align*}
G_N \mathbf{c}=X_{g_N(\mathbf{c},X_{K_{\cdot}}(\mathbf{Z}_N))}(\mathbf{Z}_N)=X_{\mathbf{c}^\top \mathbb{J}_N\nabla_1K(\mathbf{Z}_N,\cdot)}(\mathbf{Z}_N)=\mathbb{J}_N\nabla_{1,2}K(\mathbf{Z}_N,\mathbf{Z}_N)\mathbb{J}_N^\top\mathbf{c}.    
\end{align*}
for all $\mathbf{c}\in T_{\mathbf{Z}_N}\mathfrak g^*$. 
Therefore, the estimator in Theorem \ref{Rep-Ker} turns to
\begin{align}
\widehat{h}_{\lambda,N}&= g_N( (G_N+\lambda NI)^{-1}\mathbf{X}_{\sigma^2,N},X_{K_{\cdot}}(\mathbf{ Z}_N,\cdot))\notag\\
&=\mathbf{X}_{\sigma^2,N}^\top \left(\mathbb{J}_N\nabla_{1,2}K(\mathbf{Z}_N,\mathbf{Z}_N)\mathbb{J}_N^\top+\lambda NI\right)^{-1}\mathbb{J}_N\nabla_1K(\mathbf{Z}_N,\cdot).\label{lie_poisson_estimator}
\end{align}

In what follows, we demonstrate the effectiveness of this learning scheme using two examples. The first one is the three-dimensional classical rigid body, which models the motion of solid bodies without deformation. The second example is the dynamics of the underwater vehicle, which has a nine-dimensional underlying phase space. We emphasize that the state spaces of Lie-Poisson systems are duals of Lie algebras and, hence, are isomorphic to Euclidean spaces; in such a framework, the Gaussian kernel is a natural choice of universal kernels. As such, even when $H$ does not belong to $\mathcal{H}_K$, we can hope for learning a proxy function in $\mathcal{H}_K$ that is close to a ground-truth Hamiltonian $H$.\par

We will see in these two examples that $\widehat{h}_{\lambda, N}$ can only recover the ground-truth Hamiltonian function up to a Casimir function. We emphasize that the estimator $\widehat{h}_{\lambda, N}$ is always unique, and the possible difference by a Casimir function is present because $\widehat{h}_{\lambda,N}\in \mathcal{H}^{\bot}_{\mathrm{null}}$, while the ground-truth $H$ may not belong to $\mathcal{H}^{\bot}_{\mathrm{null}}$, and hence the difference between $H$ and $\widehat{h}_{\lambda,N}$ could converge not to zero, but to a Casimir function. An analytic reason for this is that the Hamiltonian functions that we shall be using for data generation are quadratic and violate the source condition \eqref{sou-con} (see Example 2.9 in \cite{hu2024structure} for a characterization of the RKHS of the Gaussian kernel). To better illustrate this point, we provide a third example in which the Hamiltonian $H\in\mathcal{H}^{\bot}_{\mathrm{null}}$ satisfies the source condition \eqref{sou-con} for sufficiently large $S>0$ ($S$ is the constant introduced in \eqref{sou-con}; see also Proposition \ref{charac_source}). In that case, according to Theorem \ref{convergence_theorem}, $\widehat{h}_{\lambda,N}$ converges to $H$ in the $\mathcal{H}_K$ norm, and the difference converges to $0$ as $N\rightarrow \infty$. We will see that, numerically, $\widehat{h}_{\lambda,N}$ accurately recovers the ground-truth Hamiltonian.

\subsubsection{The rigid body}

Rigid body motion has been introduced in Example \ref{rigid_body_example}. Using \eqref{Lie-Poisson comp-str}, the compatible structure $J$ associated with the Euclidean metric in ${\Bbb R}^3$ can be written in this case as
\begin{align*}
    J(\Pi) = \hat{\Pi}.
\end{align*}

\noindent{\bf Numerical results} \quad We pick $\mathbb{I}=\operatorname{diag}\{1,10,0.1\}$ and consider the standard Gaussian kernel with parameter $\eta$ on $\mathfrak{so}(3)^*\cong \mathbb{R}^3$. In order to generate the data, we set $N=500$ and sample $N$ states from the uniform distribution on the cube $[-1,1]^3$. We then obtain the Hamiltonian vector fields at these states that, this time, we do not perturb with noise ($\sigma=0$). In view of \eqref{dyn-sca}, we set the regularization parameter $\lambda=c\cdot N^{-\alpha}$ with $\alpha=0.4$. We then perform a grid search for the parameters $\eta$ and $c$ using $5$-fold cross-validation, which results in the optimal parameters $\eta=2.5$ and $c=2.5\cdot 10^{-5}$. Given that $\mathfrak{so}(3)^*\cong \mathbb{R}^3$ is $3$-dimensional, we fix the third dimension, that is, $\Pi_3=0$, and visualize both the ground-truth Hamiltonian functions $H$ and the reconstructed Hamiltonian functions $\widehat{h}_{\lambda,N}$ in the dimensions $(\Pi_1,\Pi_2)$, see Figure \ref{rigid_body} (a) and (b). The ground-truth Hamiltonian $H$ is quadratic and violates the source condition \eqref{sou-con}, so $\widehat{h}_{\lambda,N}$ and $H$ could differ by a Casimir function. 

\noindent From Example \ref{rigid_body_example}, all possible Casimir functions are of the form $\Phi(\|\Pi\|^2)$ for some function $\Phi:\mathbb{R}\rightarrow \mathbb{R}$. We try to compensate for the difference via a simple Casimir of the form $a\cdot \|\Pi\|^2$, for some constant $a\in \mathbb{R}$. After trial and error, we select $a=1.85$ and visualize the ``Casimir-corrected'' estimator, that is $\widehat{h}_{\lambda, N}+1.85\cdot \|\Pi\|^2$, see Figure \ref{rigid_body} (c). We also visualize the mean-square error of the predicted Hamiltonian vector fields in a heatmap, see Figure \ref{rigid_body} (d).

\begin{figure}[htp]
    \centering
    \subfigure[]{\includegraphics[width=0.4\textwidth]{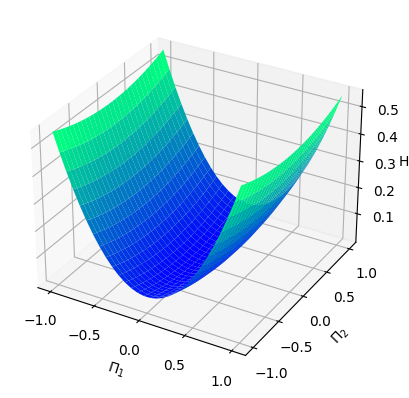}}
    \subfigure[]{\includegraphics[width=0.4\textwidth]{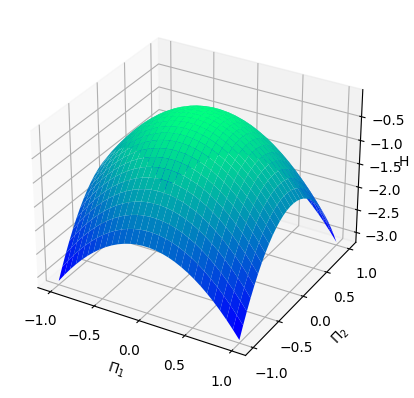}} 
    \medskip
    \subfigure[]{\includegraphics[width=0.4\textwidth]{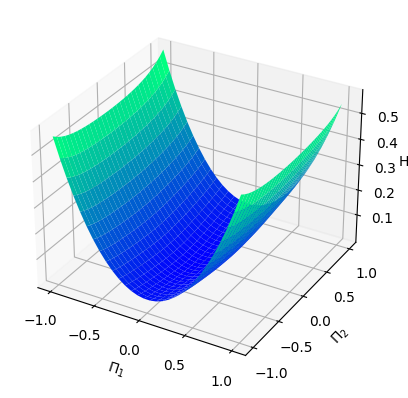}} 
    \subfigure[]{\includegraphics[width=0.4\textwidth]{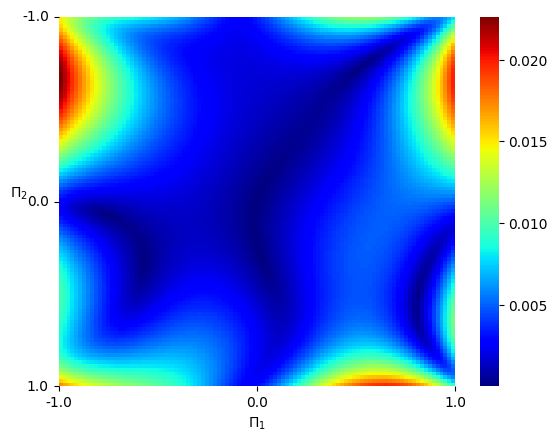}}
    \caption{Rigid body dynamics: (a) Ground-truth Hamiltonian (b) Learned Hamiltonian with $N=500$ (c) Learned Hamiltonian adjusted by a Casimir function (d) Squared error of the predicted Hamiltonian vector field } 
    \label{rigid_body}
\end{figure}

\noindent{\bf Analysis} \quad In this setting, the Hamiltonian function $H$ is a quadratic function, which, according to Example 2.9 in \cite{hu2024structure}, does not belong to $\mathcal{H}_K$. In particular, the source condition \eqref{sou-con} cannot be satisfied. Therefore, even though the Gaussian kernel is universal, the proxy function in $\mathcal{H}_K$, which approximates the ground-truth Hamiltonian $H$, can only be learned at best up to a Casimir function. That is why we choose to manually correct the estimator by adding a Casimir function. From the numerical results, we see that without the ``Casimir correction'', $H$ and $\widehat{h}_{\lambda, N}$ can be very different, despite the fact that the Hamiltonian vector fields are very well replicated.

\subsubsection{Underwater vehicle dynamics}
The Lie-Poisson underwater vehicle dynamics has been introduced in Example \ref{Underwater vehicle}. Using also a Euclidean metric, \eqref{Lie-Poisson comp-str}, the compatible structure $J$ is 
\begin{align*}
J(\Pi, \mathrm{Q}, \Gamma)=  \left(\begin{array}{ccc}
\hat{\Pi} & \hat{\mathrm{Q}} & \hat{\Gamma} \\
\hat{\mathrm{Q}} & 0 & 0 \\
\hat{\Gamma} & 0 & 0
\end{array}\right).
\end{align*}

\noindent{\bf Numerical results} \quad We pick $m=1$, $g=9.8$, $I = \operatorname{diag}\{1,2,3\}$, $M=\operatorname{diag}\{3,2,1\}$ and $r_G=(1,1,1)^{\top}$. We consider the standard Gaussian kernel with parameter $\eta$ on $\mathfrak{so}(3)^*\times {\mathbb{R}^{3}}^*\times {\mathbb{R}^{3}}^*\cong \mathbb{R}^9$. We set $N=400$ and sample $N$ states from the uniform distribution on the cube $[-1,1]^9$, and then obtain the Hamiltonian vector fields at these states. We set the regularization parameter $\lambda=c\cdot N^{-\alpha}$ with $\alpha=0.4$. We then perform a grid search of the parameters $\eta$ and $c$ using $5$-fold cross-validation, which results in the optimal parameter $\eta=5$ and $c=7.5\cdot 10^{-6}$. Given that $\mathfrak{so}(3)^*\times {\mathbb{R}^{3}}^*\times {\mathbb{R}^{3}}^*$ is $9$-dimensional, we fix the 3rd to 9th dimensions, that is, $\Pi_3=0$, $Q=\Gamma={\bf 0}$, and visualize both the ground-truth Hamiltonian functions $H$ and the reconstructed Hamiltonian functions $\widehat{h}_{\lambda,N}$ in the dimensions $(\Pi_1,\Pi_2)$, see Figure \ref{under_water_vehicle} (a) and (b). The ground-truth Hamiltonian $H$ does not satisfy the source condition \eqref{sou-con}, so $\widehat{h}_{\lambda,N}$ and $H$ generically differ by a Casimir function. We also visualize the mean-square error of the predicted Hamiltonian vector fields with a heatmap, see Figure \ref{under_water_vehicle} (c).

\begin{figure}[htp]
    \centering
    \subfigure[]{\includegraphics[width=0.32\textwidth]{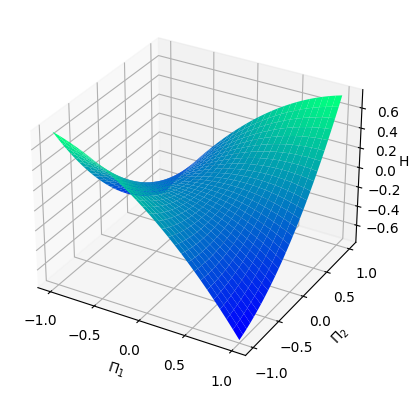}}
    \subfigure[]{\includegraphics[width=0.32\textwidth]{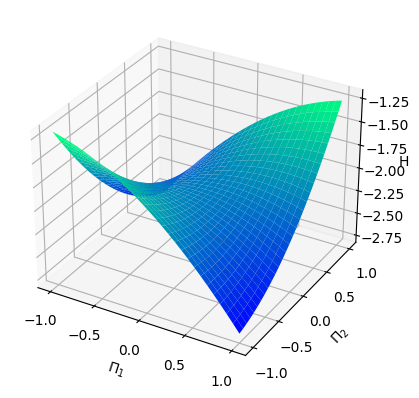}} 
    \medskip
    \subfigure[]{\includegraphics[width=0.32\textwidth]{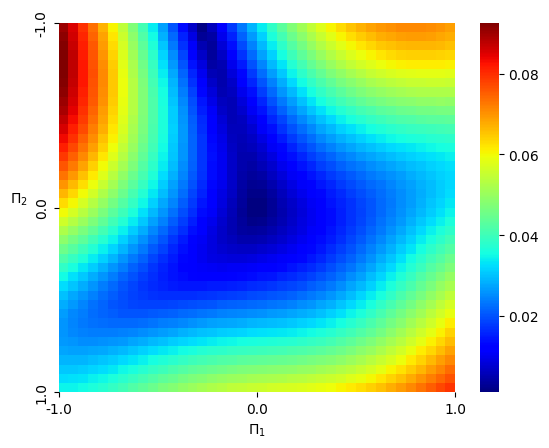}}
    \caption{Underwater Vehicle: (a) Ground-truth Hamiltonian (b) Learned Hamiltonian with $N=400$ (c) Squared error of the predicted Hamiltonian vector field } 
    \label{under_water_vehicle}
\end{figure}

\noindent{\bf Analysis} \quad In this setting, the Hamiltonian function $H$ is a quadratic function, which, according to Example 2.9 in \cite{hu2024structure}, does not belong to $\mathcal{H}_K$. In particular, the source condition \eqref{sou-con} cannot be satisfied. Therefore, even though the Gaussian kernel is universal, the proxy function in $\mathcal{H}_K$, which approximates the ground-truth Hamiltonian $H$, can only be learned at best up to a Casimir function. From Example \ref{Underwater vehicle}, all possible Casimir functions are of the form $\Phi\circ (C_1,\ldots, C_6)$ for some function $\Phi:\mathbb{R}^6\rightarrow \mathbb{R}$. To determine such a function $\Phi$, one might consider running a polynomial regression to approximate $\Phi$. From the $H$-axis, we see that, although the shapes of the Hamiltonian look alike, the exact Hamiltonian value still differs significantly by a constant, which is a trivial Casimir function. On the other hand, we notice that the Hamiltonian vector fields are very well replicated.

\subsubsection{Exact recovery of Hamiltonians in the RKHS}

In the rigid body and the underwater vehicle examples, we saw that even though the Hamiltonian vector field can be learned very well, the data-generating Hamiltonian function can only be estimated up to a Casimir function due to the violation of the source condition. We now aim to manually pick a ground-truth Hamiltonian that satisfies the source condition and reconstruct the Hamiltonian function accurately from the data. To find such a Hamiltonian, we use \cite{minh2010some}, which provides an orthonormal basis of the RKHS of the Gaussian kernel. It turns out that exactly one of the basis functions is a Casimir and any other basis function belongs to $\mathcal{H}^{\bot}_{\mathrm{null}}$ according to Example \ref{rigid_body_example}. Moreover, by Proposition \ref{charac_source}, the basis functions, except the Casimir basis element, satisfies the source condition for sufficiently large $S>0$. In view of the above, we consider the standard Gaussian kernel on $\mathbb{R}^3$ with fixed parameter $\eta=2$ and choose the Hamiltonian function as
\begin{equation}\label{Gaussian_RKHS_Ham}
    H({\bf x}) = x_1^2x_2^3 e^{-\frac{\|{\bf x}\|^2}{\eta^2}}.
\end{equation}

\noindent The Lie-Poisson structure is the same as the one we used for the rigid body, with the only difference in the experiment being the Hamiltonian function.

\noindent{\bf Numerical results} \quad To guarantee the matching of the RKHS, we consider the standard Gaussian kernel with the fixed parameter $\eta=2$ (the same as the ground-truth) on $\mathbb{R}^3$. We set $N=500$ and sample $N$ states from the uniform distribution on the cube $[-1,1]^3$, and then obtain the Hamiltonian vector fields at these states. In view of \eqref{dyn-sca}, we set the regularization parameter $\lambda=c\cdot N^{-\alpha}$ with $\alpha=0.4$. We then perform a grid search of the parameter $c$ using $5$-fold cross-validation, which results in the optimal parameter $c=7.5\cdot 10^{-6}$. Given that $\mathfrak{so}(3)^*\cong \mathbb{R}^3$ is $3$-dimensional, we fix the third dimension, that is, $x_3=0$, and visualize both the ground-truth Hamiltonian functions $H$ and the reconstructed Hamiltonian functions $\widehat{h}_{\lambda,N}$ in the dimensions $(x_1,x_2)$, see Figure \ref{kernel_sections} (a) and (b).

\noindent{\bf Analysis} \quad In this setting, the Hamiltonian function $H$ is chosen to satisfy the source condition \eqref{sou-con}. In this case, both the ground-truth Hamiltonian $H$ and the estimator $\widehat{h}_{\lambda, N}$ belong to $\mathcal{H}^{\bot}_{\mathrm{null}}$. The possibility of the presence of Casimir functions is eliminated, and hence by Theorem \ref{convergence_theorem}, $\widehat{h}_{\lambda, N}$ converges to $H$ in the RKHS norm. As shown in Figure \ref{kernel_sections} (c), the Hamiltonian function can replicated even without ``Casimir correction'', in contrast with the rigid body example.

\begin{figure}[htp]
    \centering
    \subfigure[]{\includegraphics[width=0.45\textwidth]{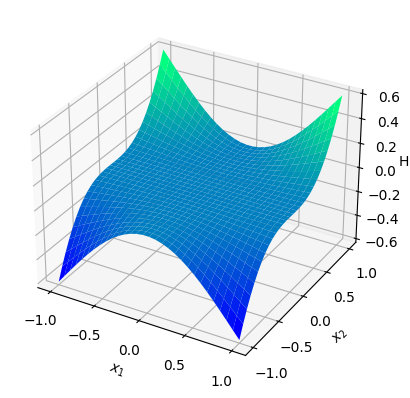}}
    \subfigure[]{\includegraphics[width=0.45\textwidth]{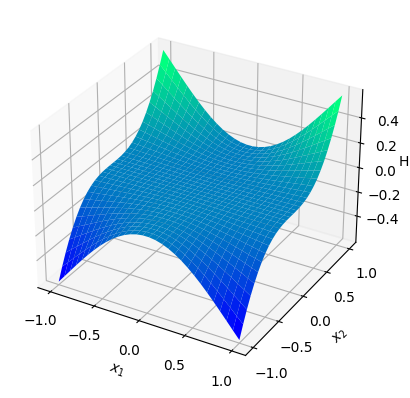}}
    \medskip
    \subfigure[]{\includegraphics[width=0.45\textwidth]{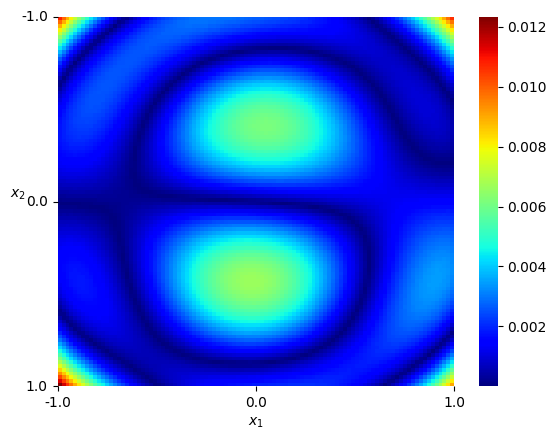}}
    \subfigure[]{\includegraphics[width=0.45\textwidth]{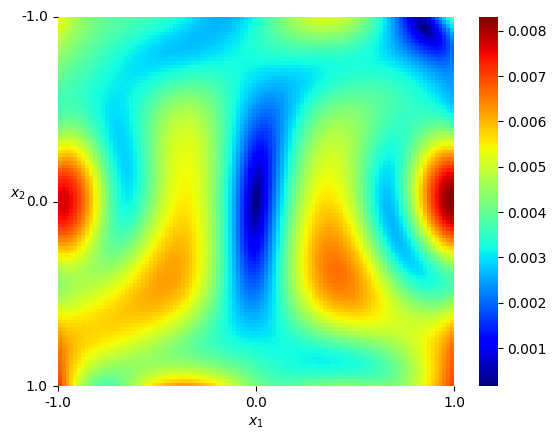}}
    \caption{Gaussian kernel sections: (a) Ground-truth Hamiltonian (b) Learned Hamiltonian with $N=500$ (c) Absolute error of the predicted Hamiltonian function (d) Squared error of the predicted Hamiltonian vector field } 
    \label{kernel_sections}
\end{figure}

\subsection{Learning on non-Euclidean symplectic manifolds}
\label{Hamiltonian systems on symplectic manifolds}
As mentioned in Example \ref{symplectic-bracket}, any symplectic manifold is a Poisson manifold. Let $M$ be a symplectic manifold with a symplectic form $\omega$. Then the Hamiltonian vector field associated with Hamiltonian function $H$ defined in \eqref{hamiltonian-vector-field} is
\begin{align*}
X_H=\omega^{\sharp}(\bm{\mathrm{d}}H)=J\nabla H,
\end{align*}
where $\omega^{\sharp}:T^*M\to TM$ is the bundle isomorphism induced by the symplectic form $\omega$ and $J:TM\to TM$ given by
\begin{align}\label{sym-com-str}
J(x) v: =\omega_x^\sharp\left(g_x^\flat(v)\right), \quad \forall x\in M, v\in T_xM,  
\end{align}
with the bundle isomorphism $g^\flat:TM\to T^*M$ induced by the Riemannian metric $g$. Unlike in the more general Poisson setting, $J$ is a bundle isomorphism in the symplectic setup. 
We aim to learn the Hamiltonian function $H$ out of the equation
\begin{align}\label{Ham-sym-sys}
\dot{z} =J\nabla H(z).
\end{align}
Following Theorem \ref{Rep-Ker}, the kernel estimator in \eqref{rep-ker} is
\begin{equation*}
\widehat{h}_{\lambda,N}= g_N( \widehat {\bf c},X_{K_{\cdot}}(\mathbf{ Z}_N,\cdot))=g_N( \widehat {\bf c},\mathbb{J}_N\nabla K(\mathbf{ Z}_N,\cdot)),
\end{equation*}
where $\widehat{\mathbf{c}}\in T_{\mathbf{Z}_N}M$ is given by
\begin{align*}
\widehat{\mathbf{c}}=(G_N+\lambda NI)^{-1}\mathbf{X}_{\sigma^2,N}.
\end{align*}

In applications, it is important to compute the kernel estimator $\widehat h_{\lambda,N}$ in pre-determined local patches that cover the manifold $M$. By the local representations \eqref{local-com-str} and \eqref{local-Gram}, the compatible structure $J$
and the generalized Gram matrix $G_N$ of the Hamiltonian system \eqref{Ham-sym-sys} are 
\begin{align*}
    J^{ij} = -\omega^{ik}g_{kj}, \quad G_N^{i,j} = \omega(\mathbf{Z}^{(i)})\partial_{1,2}K(\mathbf{Z}^{(i)},\mathbf{Z}^{(j)})\omega^\top(\mathbf{Z}^{(j)})g^{(j)}.
\end{align*}

In what follows, we demonstrate the effectiveness of this learning scheme on two examples that share the same symplectic manifold $M= S ^2 \times S ^2$ with the same symplectic form $\omega$ and the same Riemannian metric $g$, while governed by two different Hamiltonian functions $H$. The first Hamiltonian function is the restricted Euclidean 3-norm to the manifold $M$, and the second one describes the two-vortex dynamics. We will see that the first Hamiltonian is ``well-behaved'' and the second Hamiltonian exhibits singularities.

We emphasize that, technically, there are three challenges in relation to learning these systems. First, it is difficult to verify that a given Hamiltonian function belongs to the RKHS $\mathcal{H}_K$ of a pre-determined kernel $K$ on the manifold due to the lack of results in the literature on the characterization of functions in this type of RKHS on manifolds. Second, in case the Hamiltonian function does not lie in the RKHS of the selected kernel $K$, it is important to investigate the universality properties of kernels on manifolds to hope for at least a good approximation. Third, the Hamiltonian function of the two-vortex system exhibits singularities, which pose even more difficulties to recover $H$ from $X_H$, since the vector field diverges at singularities. We will see that even in this example, $\widehat{h}_{\lambda,N}$ still recovers the qualitative behaviors of the Hamiltonian function. 

\subsubsection{Metric, symplectic, and compatible structures in $S ^2 \times  S ^2  $}

Let $\overline{\mathcal{P}}=S^2 \times S^2$, the product of $2$ copies of unit $2$-spheres in $\mathbb{R}^3$. The phase space for the $2$-vortex problem is $\mathcal{P}=\overline{\mathcal{P}} \backslash \Delta$, where $\Delta$ is the diagonal, that is,
\begin{align*}
\Delta=\left\{\mathbf{x}=(x_1, x_2) \mid x_1=x_2\right\}.
\end{align*}
\noindent
The symplectic structure on $\mathcal{P}$ is given by
\begin{align}\label{symp-vort}
\omega_{\mathcal{P}}=\lambda_1 \pi_1^* \omega_{S^2}+\lambda_2 \pi_2^* \omega_{S^2},    
\end{align}
where $\pi_j$ is the Cartesian projection on to the $j$-th factor, $\omega_{S^2}$ the natural symplectic form on $S^2$, and $\lambda_j$ the vorticity of the $j$ th vortex. The Poisson structure is given by
$$
\{f, g\}= \lambda_1^{-1}\left[\mathrm{~d}_1 f, \mathrm{~d}_1 g, x_1\right] + \lambda_2^{-1}\left[\mathrm{~d}_2 f, \mathrm{~d}_2 g, x_2\right],
$$
where $\left[\mathrm{d}_j f, \mathrm{~d}_j g, x_j\right]$ is the triple product of $[a, b, c]=a \cdot(b \times c)$ for $a, b, c \in \mathbb{R}^3$. 

\noindent{\bf Local coordinates.}\quad We first consider the local structure of the unit sphere $S^2$ as a Riemannian manifold.
We parametrize the unit sphere $S^2$ in terms of two angles, that is, $\theta$ (the colatitude) and $\varphi$ (the longitude), 
and use two patches $(V_1,\phi_1)$ and $(V_2,\phi_2)$ to cover the whole sphere $S^2$. The first patch is given by 
\begin{equation*}
\begin{cases}
V_1 =\{(\theta,\varphi)\mid  \theta\in(0,\pi),\varphi\in(0,2\pi)\},\\  
\phi_1:(\theta,\varphi) \mapsto (\sin\theta\cos\varphi,\sin\theta\sin\varphi,\cos\theta),
\end{cases} 
\end{equation*}
which implies that the semicircle from the north pole to the south pole
lying in the $xz$-plane is not covered by this chart. In order to cover the whole sphere, we need another chart obtained by deleting the semicircle in the
$xy$-plane from $(0,1,0)$ to $(0,1,0)$ and with $x\leq 0$. In view that the patch $(V_1,\phi_1)$ is already topologically dense in $S^2$, we omit the explicit parameterization of $(V_2,\phi_2)$ for convenience and focus on the first patch.

The unit sphere $S^2$ admits a natural symplectic form given by its area form that is locally expressed by $\omega_{S^2}=\sin\theta \mathrm{d}\theta\wedge \mathrm{d}\varphi$ and a Riemannian metric $g_{S^2}=\mathrm{d}\theta^2+\sin^2\theta \mathrm{d}\varphi^2$ inherited from the ambient Euclidean metric in ${\Bbb R}^3 $. The corresponding matrix forms in the coordinate patch $(V_1,\phi_1)$ can be expressed by 
\begin{equation*}
\omega_{S^2}(\theta,\varphi)= \begin{bmatrix}
    0 & \sin\theta\\
    -\sin\theta &0
\end{bmatrix},   \quad\text{ and } \quad g_{S^2}(\theta,\varphi)= \begin{bmatrix}
    1 & 0\\
    0 &\sin^2\theta
\end{bmatrix}.  
\end{equation*}
\noindent
These structures induce local symplectic and Riemannian metrics on the patch $(V_1 \times V_1,\phi_1\times \phi_1)$ (that is, two copies of $(V_1,\phi_1)$) of the product manifold $S ^2 \times S ^2 $ given by
\begin{align*}
\omega_{\mathcal{P}}(\theta_1,\varphi_1,\theta_{2},\varphi_2) = \operatorname{diag}\{\lambda_1\omega_{S^2}(\theta_1,\varphi_1),\lambda_N\omega_{S^2}(\theta_2,\varphi_2)\},
\end{align*}
where $(\theta_1,\varphi_1,\theta_{2},\varphi_2) \in V_1 \times V_1$. The matrix form of the product metric $g_{\mathcal{P}}$ is 
\begin{align*}
g_{\mathcal{P}}(\theta_1,\varphi_1,\theta_{2},\varphi_2) = \operatorname{diag}\{g_{S^2}(\theta_1,\varphi_1), g_{S^2}(\theta_2,\varphi_2)\},
\end{align*}
hence, the compatible structure $J_{\mathcal{P}}$ of $2$-vortex system is 
\begin{align*}
J_{\mathcal{P}}(\theta_1,\varphi_1,\theta_{2},\varphi_2) = \operatorname{diag}\{\lambda_1\omega_{S^2}(\theta_1,\varphi_1)g_{S^2}(\theta_1,\varphi_1),\lambda_2\omega_{S^2}(\theta_2,\varphi_2)g_{S^2}(\theta_2,\varphi_2)\}.  
\end{align*}

\subsubsection{A universal kernel and data sampling on $S ^2 \times  S ^2  $}

\noindent {\bf Universal kernel.} \quad Consider the RKHS ${\mathcal H} _K  $ defined on a Hausdorff topological space ${\cal X}  $. Let  ${\cal Z}\subset {\cal X}$ be an arbitrary but fixed compact subset of ${\cal X} $ and denote by ${\mathcal H} _K  ({\cal Z})\subset {\mathcal H} _K$ the completion in the RKHS norm of the span of kernel sections determined by the elements of ${\cal Z} $. We write this as:
\begin{equation}
\label{universal kernel}
{\mathcal H} _K  ({\cal Z})= \overline{{\rm span} \left\{K _z\mid z \in {\cal Z}\right\}}.
\end{equation} 
Denote now by $\overline{\mathcal{H}_{K}({\cal Z})}$ the uniform closure of ${\mathcal H} _K  ({\cal Z}) $.
A kernel $K$ is called universal if  for any compact subset $\mathcal{Z}\subset {\cal X}$, we have that $\overline{\mathcal{H}_{K}({\cal Z})}=C({\cal Z})$, with $C({\cal Z})$ the set of real-valued continuous functions on ${\cal Z} $. Equivalently, this implies that for any $\varepsilon>0$, and any function $f\in C(\mathcal{Z})$, there exists a function $g\in\mathcal{H}_{K}({\cal Z})$, such that $\|f-g\|_{\infty}<\varepsilon$.  Many kernels that are used in practice are indeed universal  \cite{micchelli2006universal,steinwart2001influence}, e.g., the Gaussian kernel on Euclidean space. For non-Euclidean spaces there is still a lack of literature in this field. The universality of kernels is a crucial feature in recovering unknown functions using the corresponding RKHS. 
In our learning approach, we shall consider the kernel on $S^2\times S^2$ defined by
\begin{align}\label{numerical kernel}
K_{\eta}(x,y):= \exp\left\{-\frac{1}{\eta^2}|\iota(x)-\iota(y)|^2\right\}, \quad \text{ for all}\quad x,y\in S^2\times S^2,
\end{align}
where $\eta>0$ is a constant and  $\iota:S^2\times S^2\to \mathbb{R}^3\times \mathbb{R}^3$ is the injection map. We emphasize that since $\iota$ is continuous and injective, by \cite[Theorem 2.2]{christmann2010universal},  the kernel $K_\eta$ in \eqref{numerical kernel} is universal for all $\eta>0$ on $S^2\times S^2$.

\noindent{\bf Sampling method.} \quad Note that the total surface of the unit sphere is
\begin{align*}
    \int_0^{2\pi}\int_0^{\pi}\sin\theta \mathrm d \theta \mathrm d\varphi=4\pi.
\end{align*}
The cumulative distribution function can be computed as $\Theta,\Phi$ such that
\begin{align*}
\mathbb{P}(\Theta\leq \theta,\Phi\leq \varphi) =\frac{1}{4\pi}\int_0^{\varphi}\int_0^{\theta}\sin t ~dt ds= \frac{1}{4\pi} \varphi(1-\cos \theta), 
\end{align*}
which factorizes into the product of the margins
\begin{align*}
\mathbb{P}(\Theta\leq \theta)=\frac{1}{2}(1-\cos\theta),\quad \mathbb{P}(\Phi\leq \varphi)   =\frac{1}{2\pi}\varphi.
\end{align*}
One can hence obtain a uniform distribution on $S^2$ by sampling $U_1,U_2$ uniformly on the interval $(0,1)$, and compute 
\begin{align*}
\Theta = \arccos(2U_1-1),\quad \Phi = 2\pi U_2.
\end{align*}
This extends to a uniform distribution on the product space $\mathcal{P}$, which we adopt to generate training data. 

\subsubsection{Exact recovery of Hamiltonians using a universal kernel}

In our first example, we consider as Hamiltonian function on $\overline{\mathcal{P}}$ the Euclidean 3-norm on $\mathbb{R}^3\times \mathbb{R}^3$.

\noindent{\bf Formulation} \quad We choose the globally defined Hamiltonian function 
\begin{equation*}
    H^{\prime}\left(x_1, x_2\right)= (\|x_1\|^3_3+\|x_2\|^3_3)^{\frac{1}{3}},
\end{equation*}
where $\|\cdot \|_3$ is the usual 3-norm on $\mathbb{R}^3$ defined as $\|(a,b,c)\|_3 = (a^3+b^3+c^3)^{\frac{1}{3}}$.
In local coordinates, we focus on the patch 
$(V_1 \times V_1,\phi_1\times \phi_1)$ and so that the Hamiltonian become
\begin{align*}
H(\theta_1,\varphi_1,\theta_{2},\varphi_2) = H^{\prime}(\phi_1(\theta_1,\varphi_1), \phi_1(\theta_2, \varphi_2)).
\end{align*}
We shall call any Hamiltonian function constructed in this way a spherical 3-norm because the domain is on the products of spheres. The restricted Gaussian kernel $K_{\eta}$ as in \eqref{numerical kernel}  will be adopted for learning, also focused on the patch $(V_1 \times V_1,\phi_1\times \phi_1)$ on $\overline{\mathcal{P}}$.

\noindent{\bf Numerical results} \quad 
We set $N=1200$ and sample $N$ states from the uniform distribution on $\overline{\mathcal{P}}$, and then obtain the Hamiltonian vector fields at these states. In view of \eqref{dyn-sca}, we set the regularization parameter $\lambda=c\cdot N^{-\alpha}$ with $\alpha=0.4$. We perform hyperparameter tunning and selected parameter $\eta = 0.9$ and $c=1e^{-4}$. Given that $\overline{\mathcal{P}}$ is $4$-dimensional, we fix two of the dimensions, that is, $(\theta_2,\varphi_2)=(\frac{\pi}{2},0)$, $(\theta_1,\theta_2) = (\frac{\pi}{3},\frac{\pi}{3})$, and visualize both the ground-truth Hamiltonian functions $H$ and the reconstructed Hamiltonian functions $\widehat{h}_{\lambda,N}$ in the remaining $2$ dimensions, see Figure \ref{sphere_norm} (a)(b) and (c)(d). We also visualize the absolute error of the predicted Hamiltonian function in a heatmap, after a vertical shift to offset any potential constant, see Figure \ref{sphere_norm} (e)(f). We also picture the Hamiltonian function globally on the first sphere and the second sphere respectively as heatmaps, while fixing $(\theta_2,\varphi_2)=(\frac{\pi}{2},0)$ and $(\theta_1,\varphi_1)=(\frac{\pi}{2},0)$ respectively on the other sphere, see Figure \ref{sphere_norm_heatmap}. Lastly, we picture the absolute error of the learned Hamiltonian in the setting of only one sphere and $N$ increased to 2000, see Figure \ref{single_sphere_norm}, which demonstrates the enhanced quantitative performance of the algorithm as the number of data becomes large.

\begin{figure}[htp]
    \centering
    \subfigure[]{\includegraphics[width=0.4\textwidth]{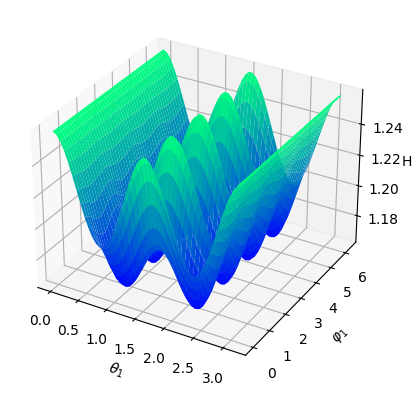}}
    \subfigure[]{\includegraphics[width=0.4\textwidth]{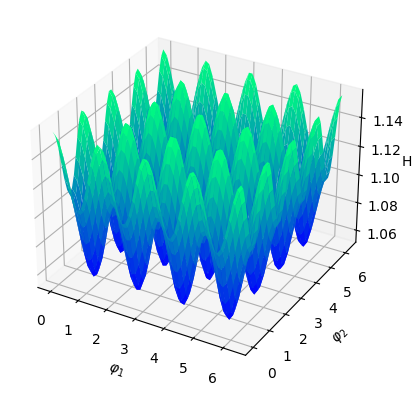}} 
    \medskip
    \subfigure[]
    {\includegraphics[width=0.4\textwidth]{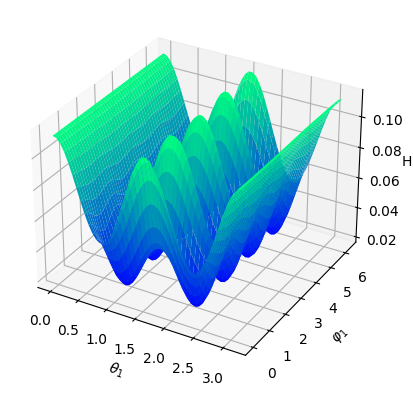}}
    \subfigure[]{\includegraphics[width=0.4\textwidth]{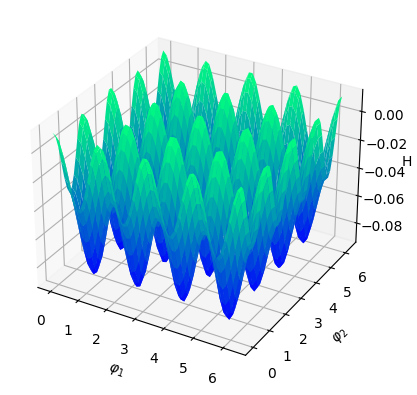}} 
    \medskip
    \subfigure[]
    {\includegraphics[width=0.4\textwidth]{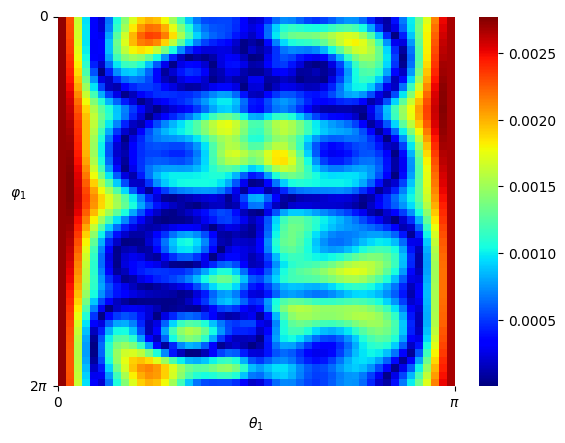}}
    \subfigure[]{\includegraphics[width=0.4\textwidth]{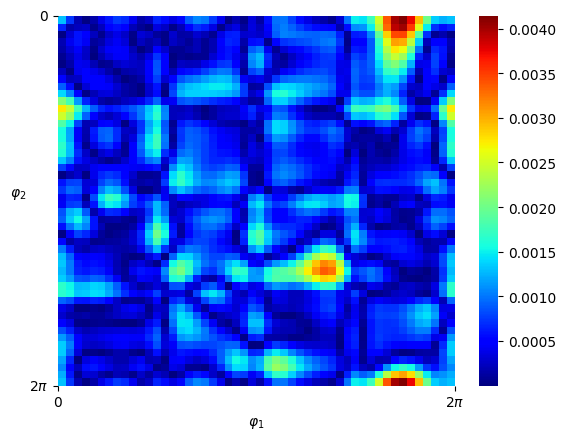}} 
    \caption{Spherical 3-norm on $S^2\times S^2$: (a)(b) Ground-truth Hamiltonian (c)(d) Learned Hamiltonian with $N=1200$ (e)(f) Absolute Error of the predicted Hamiltonian function adjusted by constant.} 
    \label{sphere_norm}
\end{figure}

\begin{figure}[htp]
    \centering
    \subfigure[]{\includegraphics[width=0.8\textwidth]{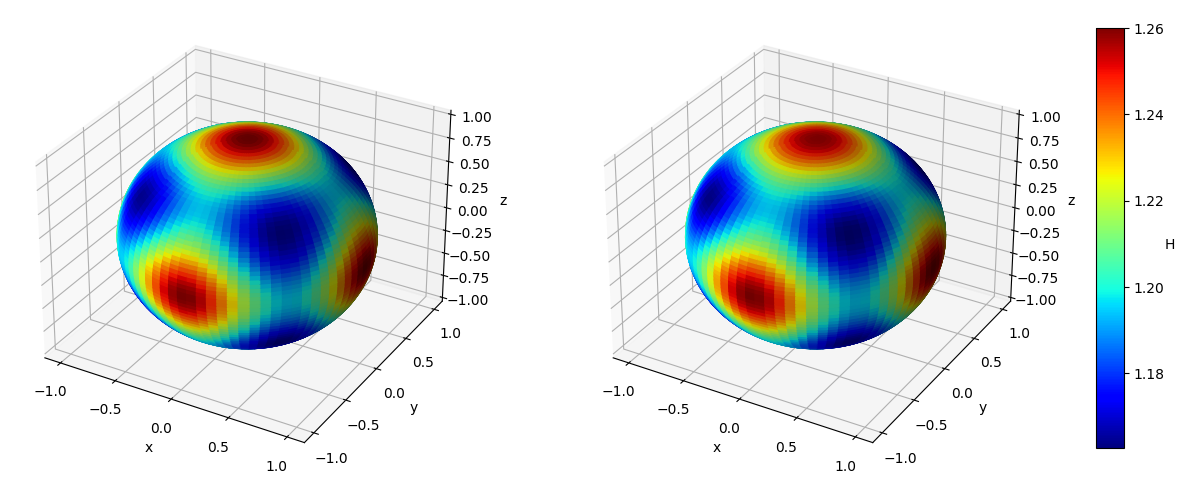}}
    \medskip
    \subfigure[]{\includegraphics[width=0.8\textwidth]{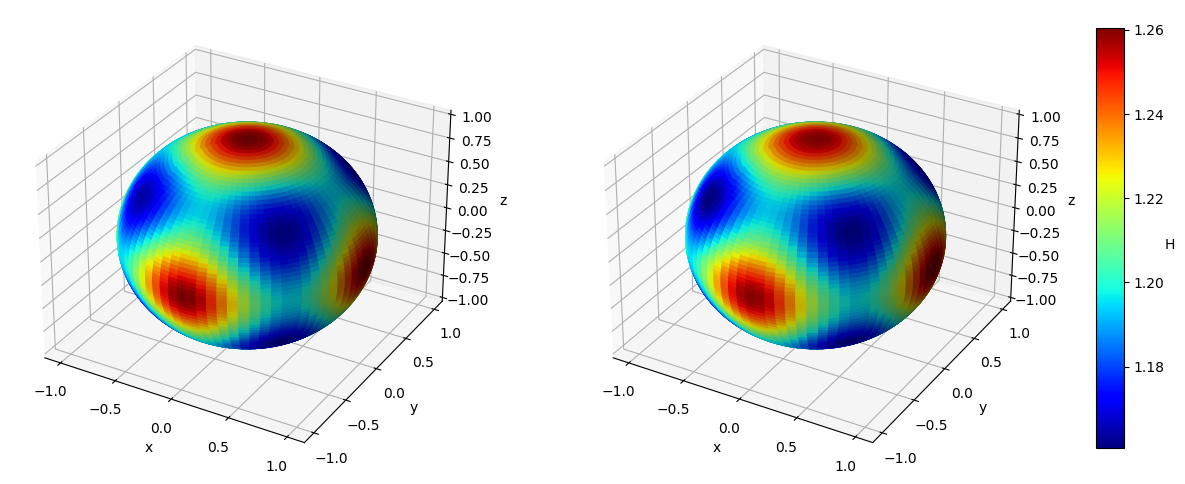}} 
    \caption{Global heatmap of the spherical 3-norm on $S^2\times S^2$ with $N=1200$: (a) left: ground-truth Hamiltonian on the first sphere; right: learned Hamiltonian on the first sphere adjusted by constant (b) left: ground-truth Hamiltonian on the second sphere; right: learned Hamiltonian on the second sphere adjusted by constant.} 
    \label{sphere_norm_heatmap}
\end{figure}

\begin{figure}[htp]
    \centering
    \includegraphics[width=0.4\textwidth]{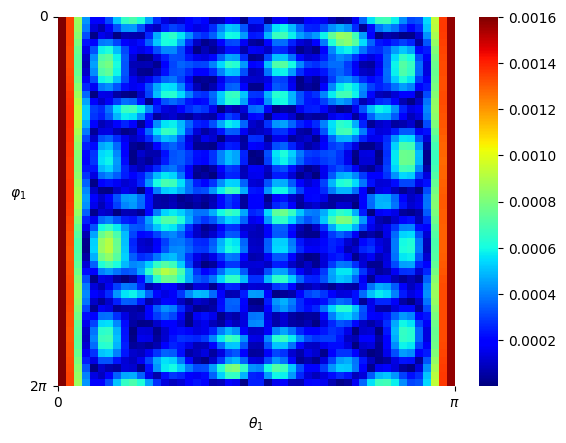}
    \caption{Spherical 3-norm on $S^2$ with $N=2000$: Absolute Error of the predicted Hamiltonian function adjusted by constant.} 
    \label{single_sphere_norm}
\end{figure}


\subsubsection{Recovery of two-vortex dynamics}

The Hamiltonian function of the two-vortex dynamics is
\begin{equation}\label{two_vertex Ham}
    H^{\prime}\left(x_1, x_2\right)=-\lambda_1 \lambda_2 \log \left(1-x_1 \cdot x_2\right),
\end{equation}
for $x_1,x_2\in \mathbb{R}^3$, and the Hamiltonian vector field $X_{H^{\prime}}$ is given by
$$
\dot{x}_j=X_{H^{\prime}}(\mathbf{x})_j=\sum_{i \neq j} \lambda_i \frac{x_i \times x_j}{1-x_i \cdot x_j}.
$$

\noindent Moreover, in local coordinate, the Hamiltonian can be written as
\begin{align*}
H(\theta_1,\varphi_1,\theta_{2},\varphi_2)=-\lambda_1\lambda_2\log(1-\sin(\theta_1)\sin(\theta_2)\cos(\varphi_1-\varphi_2)-\cos(\theta_1)\cos(\theta_2)),
\end{align*}
and, as we have seen in \eqref{rep-ham-vector-field}, the Hamiltonian vector field can be locally written as 
\begin{align*}
X_H(\theta_1,\varphi_1,\theta_{2},\varphi_2)=J_{\mathcal{P}}(\theta_1,\varphi_1,\theta_{2},\varphi_2)\nabla H(\theta_1,\varphi_1,\theta_{2},\varphi_2).
\end{align*}

\noindent{\bf Numerical results} \quad For the numerical experiment, since the patch $(V_1 \times V_1,\phi_1\times \phi_1)$ is topologically dense in the space $\mathcal{P}$, we shall assume that all the training data will fall into this patch for computational convenience. We consider the standard Gaussian kernel with parameter $\eta$ on $\mathbb{R}^6$ but restricted to $\mathcal{P}$. Since the Gaussian kernel is positive-definite on the entire $\mathbb{R}^6$, it is also positive-definite when restricted to the manifold $\mathcal{P}$, and hence constitutes a valid choice of a kernel on $\mathcal{P}$.  We set $N=1200$ and sample $N$ states from the uniform distribution on $\mathcal{P}$ as elaborated above, and then obtain the Hamiltonian vector fields at these states. In view of \eqref{dyn-sca}, we set the regularization parameter $\lambda=c\cdot N^{-\alpha}$ with $\alpha=0.4$. We performed hyperparameter tunning and set $\eta=0.7$ and $c=0.01$. Given that $\mathcal{P}$ is $4$-dimensional, we shall respectively fix two of the dimensions, that is, $(\theta_2,\varphi_2)=(\frac{\pi}{2},0)$, $(\theta_1,\varphi_1)=(\frac{\pi}{2},\frac{\pi}{2})$ and $(\theta_1,\theta_2)=(\frac{\pi}{3},\frac{\pi}{3})$, and respectively visualize the mean-square error of the predicted Hamiltonian vector fields in a heatmap, see Figure \ref{typhoon} (a)(b)(c). In the end, we picture the Hamiltonian function globally on the first sphere as a heatmap, while fixing $(\theta_2,\varphi_2)=(\frac{\pi}{2},0)$ on the second sphere, see Figure \ref{typhoon_heatmap}.

\begin{figure}[htp]
    \centering
    \subfigure[]
    {\includegraphics[width=0.32\textwidth]{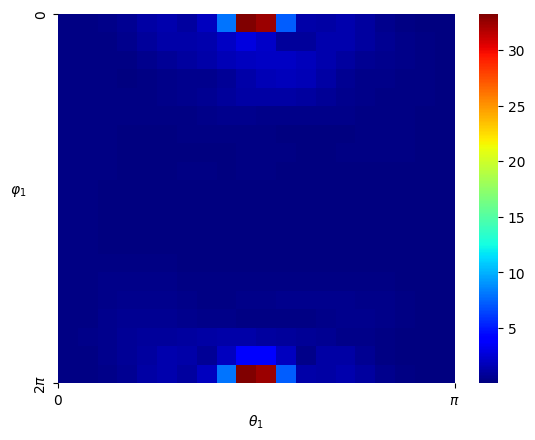}}
    \subfigure[]{\includegraphics[width=0.32\textwidth]{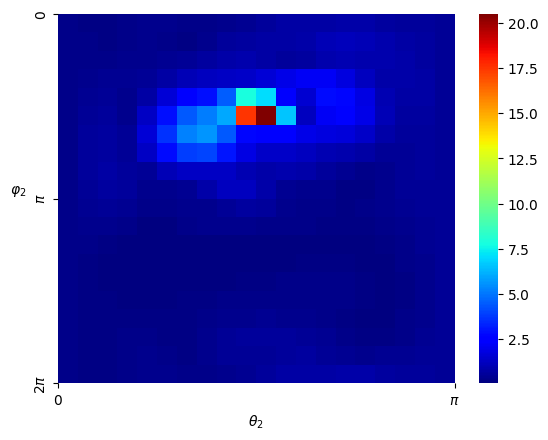}} 
    \subfigure[]{\includegraphics[width=0.32\textwidth]{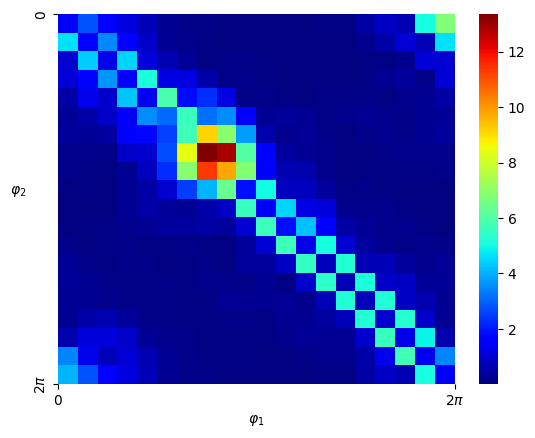}}
    \caption{Two-vortex Hamiltonian dynamics: Squared error of the predicted Hamiltonian vector field. It is observed that the vector field prediction error only becomes significant around the singularities.} 
    \label{typhoon}
\end{figure}

\begin{figure}[htp]
    \centering
    \subfigure[]{\includegraphics[width=0.8\textwidth]{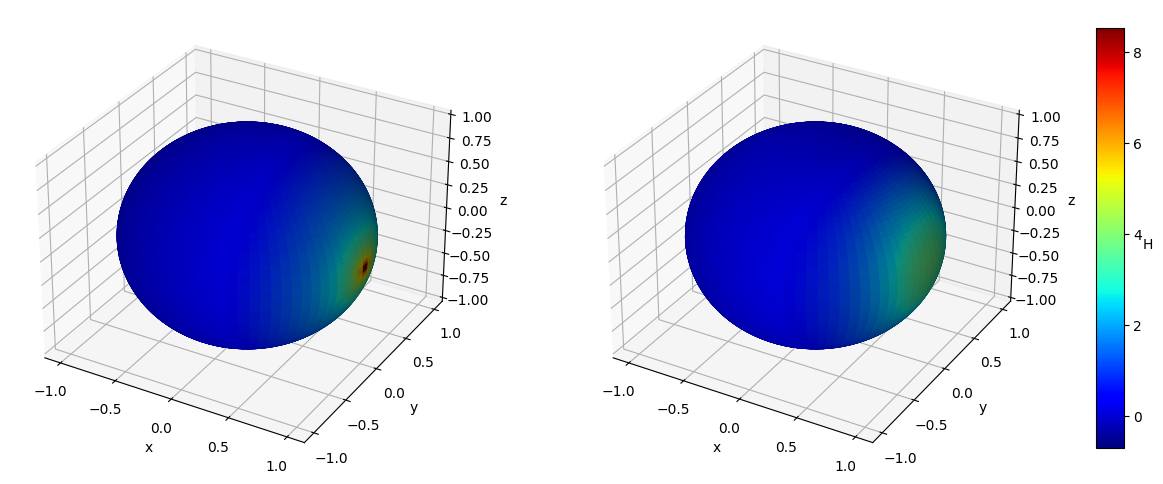}}
    \caption{Global heatmap of the Two-vortex Hamiltonian with N=1200. Left: ground-truth Hamiltonian on the first sphere; right: learned Hamiltonian on the first sphere adjusted by constant.} 
    \label{typhoon_heatmap}
\end{figure}

\noindent{\bf Analysis} \quad It can be seen that the estimator $\widehat{h}_{\lambda,N}$ fully captures the qualitative behavior of the ground-truth Hamiltonian $H$. Since $H$ exhibits singularities, by Theorem \ref{dif-rep} (iii), $H$ cannot be in $\mathcal{H}_K$. In this scenario, even though the restricted Gaussian kernel to the manifold $\mathcal{P}$ is universal, it only guarantees good approximation properties within the category of continuous functions, and not for functions with singularities.

\section{Conclusion}

This paper proposes a novel kernel-based machine learning method that offers a closed-form solution to the inverse problem of recovering a globally defined and potentially high-dimensional and nonlinear Hamiltonian function on Poisson manifolds. The method uses noisy observations of Hamiltonian vector fields on the phase space as data input. The approach is formulated as a kernel ridge regression problem, where the optimal candidate is sought within a reproducing kernel Hilbert space (RKHS) on the manifold, minimizing the discrepancy between the observed and candidate Hamiltonian vector fields (measured with respect to a chosen Riemannian metric) and an RKHS-based regularization term. Despite the complexities associated with optimization on a manifold, the problem remains convex, leading to an explicit solution derived by leveraging a ``differential" version of the reproducing property on the manifold and considering the variations of the loss function.

The kernel method exhibits various key advantages. First, as we demonstrated in Section \ref{RKHS on Riemannian manifolds}, the differential reproducing property allows the kernel ridge regression framework to naturally incorporate the differential equation constraints necessary for structure preservation, even on manifolds. Second, the method provides a closed-form solution, eliminating the computational burdens typically associated with iterative, gradient-based optimization. Additionally, the strict convexity of the Tikhonov-regularized kernel regression ensures a unique solution, addressing the ill-posedness that is inherent in recovering the Hamiltonian function in the presence of Poisson degeneracy due to the potential presence of Casimir functions. Finally, we introduced an operator-theoretic framework and a kernel estimator representation that enables a rigorous analysis of estimation and approximation errors. Additionally, when the target function exhibits a symmetry that is a priori known, as it is common in mechanical systems, an appropriate kernel can be selected to satisfy the symmetry constraints, further enhancing the method's versatility, applicability, and structure-preservation features.

This paper tackles just one aspect of the broader class of structure-preserving inverse problems, focusing on extending prior approaches from Euclidean spaces to non-Euclidean manifolds. A significant challenge still lies in addressing systems defined in infinite-dimensional spaces, such as the space of measures. A notable example is the Schr\"odinger equation, which can be interpreted as a transport equation for measures under the Hamiltonian flow. Interestingly, this transport equation can also be viewed as a Hamiltonian flow within the Wasserstein space, referred to as the Wasserstein Hamiltonian flow. Extending the kernel-based framework to identify the potential function in the Schr\"odinger equation, thereby accommodating infinite-dimensional underlying spaces, is the focus of the authors' research agenda.

The methodology in this paper assumes that Hamiltonian vector field data is directly available, which is a reasonable assumption when the observed data has a measurable physical interpretation, such as acceleration or force. However, in more realistic scenarios, the collected data might consist of discrete-time trajectories rather than continuous vector field observations. In this context, using variational integrators and other manifold-preserving techniques becomes crucial, yet these methods have not been fully explored in the existing literature in connection with this problem. Addressing this gap could open new avenues for applying structure-preserving kernel methods to a broader range of real-world problems.

As a final remark, a great wealth of knowledge has been accumulated on the qualitative behavior of Hamiltonian dynamical systems based on their geometry \cite{Abraham1978}, symmetries \cite{mwr, Ortega2004}, stability properties \cite{Ortega2005re}, or persistence \cite{mre, persistence:periodic, paper1:persitence} and bifurcation phenomena \cite{pascal:hopf, pascal}. All these concepts surely have an interplay in relation to learnability that needs to be explored.

\begin{table}\scriptsize
\caption{Comparison among symplectic vector space, symplectic manifold, and Poisson manifold setups}

\medskip

\medskip

\!\!\!\!\!\!\!\!\!\!\!\!\!\!\!\!\!\!\!\!\!\!\!\!\!\!\!\!\!\!\!\!\!\!\!\!
\begin{tblr}{
vline{2,3,4} = {-}{},
hline{1,7} = {-}{0.1em},
hline{2,3,4,5,6} = {-}{},
}
& Symplectic vector space  & Symplectic manifold &   Poisson manifold   \\
Space & $(V,\omega_{\mathrm{can}})$ & $(M,\omega)$ & $(P,\{\cdot,\cdot\})$   \\
Hamiltonian vector field& $X_h=J_{\mathrm{can}}\nabla h$ & $i_{X_h}=\mathbf{d}h$ & $X_h=B^{\sharp}\mathbf{d}h$   \\
Compatible structure & $J_{\mathrm{can}}:TV\to TV$ & $J=\omega^\sharp g^\flat:TM\to TM$ & $J=B^\sharp g^\flat:TP\to TP$   \\
Differential Gram matrix& $G_N=\mathbb{J}_{\mathrm{can}}\nabla_{1,2}K(\mathbb{Z}_N,\mathbb{Z}_N)\mathbb{J}_{\mathrm{can}}^{\top}$ & $G_N\mathbf{c}:= X_{g_N(\mathbf{c},X_{K_{\cdot}}(\mathbf{Z}_N))}(\mathbf{Z}_N)$ & $G_N\mathbf{c}:= X_{g_N(\mathbf{c},X_{K_{\cdot}}(\mathbf{Z}_N))}(\mathbf{Z}_N)$ \\
Estimator& $\mathbf{X}_{\sigma^2,N}^\top \left(G_N+\lambda NI\right)^{-1}\mathbb{J}_{\mathrm{can}}\nabla_1K(\mathbf{Z}_N,\cdot)$& $ g_N\left( (G_N+\lambda NI)^{-1}\mathbf{X}_{\sigma^2,N},X_{K_{\cdot}}(\mathbf{ Z}_N)\right)$ & $g_N\left( (G_N+\lambda NI)^{-1}\mathbf{X}_{\sigma^2,N},X_{K_{\cdot}}(\mathbf{ Z}_N)\right)$ 
\end{tblr}
\end{table}


\appendix

\section{Appendices}

\subsection{The proof of Theorem \ref{Par-Rep}}\label{proof-differential-reproducing-properties}

We prove {\bf (i)} and {\bf (ii)} simultaneously by induction on $s$.
The case $s=0$ is trivial since that means for any $x\in M$, $D^{(0,0)}K(x,\cdot)=K_{x}$ satisfies the standard reproducing property in $\mathcal{H}_K$. Let now $0\leq l\leq s-1$. Suppose that $D^{(l,0)}K(y,\cdot)\cdot v\in \mathcal{H}_K$ and \eqref{dif-rep} holds for any $y\in T^{l-1}M$ and $v\in T_yT^{l-1}M$. 
Then \eqref{dif-rep} implies that for any $1\leq k,k'\leq l-1$, $x\in T^kM$, $u\in T_xT^kM$, $x'\in T^{k'}M$, and $u'\in T_{x'}T^{k'}M$,
\begin{equation}\label{def-rep}
\langle D^{(k,0)}K(x,\cdot)\cdot u, D^{(k',0)}K(x',\cdot)\cdot u'\rangle_{\mathcal{H}_K} = D^{(k,k')} K(x,x')\cdot(u,u').
\end{equation}
Denote $K^k(y,\cdot)=D^{(k,0)}K(x,\cdot)\cdot u$ and $K^{(k,k')}(y,y'):=D^{(k,k')} K(x,x')\cdot(u,u')$ with with $y=(x,u)$ and $y'=(x',u')$.

Now, we turn to the case $l+1\leq s$. In the following proof, let $\phi:[-1,1]\to T^lM$ be a differentiable curve with $\phi(0)=y\in T^lM$ and $\phi'(0)=v\in T_yT^lM$. 
Since $T^lM$ with Riemannian metric $g_l$ is complete (see Remark \ref{Functions with bounded differentials}), by Hopf-Rinow theorem, we can always choose $\phi$ to be geodesics with constant speed 1.
We prove that {\bf (i)} and {\bf (ii)} hold for $l+1$ in three steps.

\medskip

\noindent {\bf  Step 1}: Proving $D^{(l+1,0)}K(y,\cdot)\cdot v\in \mathcal{H}_K$ for all $y\in T^{l}M$ and $v\in T_yT^lM$. 
Equations \eqref{fun-the} and \eqref{def-rep} imply that the set $\{\frac{1}{t}(K^l(\phi(t))-K^l(y)):|t|\leq 1\}$ of functions in $\mathcal{H}_K$ satisfies 
\begin{align*}
\Big\|\frac{1}{t}(&K^l(\phi(t),\cdot)-K^l(y,\cdot))\Big\|^2_{\mathcal{H}_K}=\frac{1}{t^2}\Big(K^{(l,l)}(\phi(t),\phi(t))
-K^{(l,l)}(y,\phi(t))-K^{(l,l)}(\phi(t),y)+K^{(l,l)}(y,y)\Big)\\
&= \frac{1}{t^2}\int_0^t\Big(D^{(1,0)}K^{(l,l)}(\phi(s),\phi(t))\cdot\phi'(s) -D^{(1,0)}K^{(l,l)}(\phi(s),y)\cdot \phi'(s)\Big) \mathrm{d}s\\
&= \frac{1}{t^2}\int_0^t\int_0^tD^{(1,1)}K^{(l,l)}(\phi(s),\phi(\tau))\cdot(\phi'(s),\phi'(\tau)) \mathrm{d}\tau \mathrm{d}s\\
&\leq \frac{1}{t^2}\|K\|_{C_b^{2(l+1)}}\int_0^t\int_0^t\|\phi'(\tau)\|_l\|\phi'(s)\|_l \mathrm{d}\tau \mathrm{d}s=\frac{1}{2}\|K\|_{C_b^{2(l+1)}}, \quad \forall ~|t|\leq 1,
\end{align*}
where $\|\cdot\|_l$ is induced by the Riemannian metric on $T^lM$ and we have used the assumption that $K\in C_b^{2s+1}(M\times M)$. This means that $\{\frac{1}{t}(K^l(\phi(t))-K^l(y)):|t|\leq 1\}$ lies in a closed ball of the Hilbert space $\mathcal{H}_K$ with a finite radius. Since this ball is sequentially weakly compact (see \cite[Theorem 4.2]{conway:book}), 
there is a sequence $\{t_i\}_{i=1}^{\infty}$ with $|t_i|\leq 1$ and $\lim_{i\rightarrow \infty}t_i=0$ such that $\{\frac{1}{t_i}(K^{l}(\phi(t_i),\cdot)-K^l(y,\cdot)):|t_i|\leq 1\}$ converges weakly to an element $h_{y}$ of $\mathcal{H}_K$ as $i\rightarrow\infty$. The weak convergence tells us that
\begin{equation}\label{wea-con}
\lim\limits_{i\rightarrow \infty}\Big \langle \frac{1}{t_i}(K^l(\phi(t_i),\cdot)-K^l(y,\cdot)), f \Big \rangle_{\mathcal{H}_K} = \langle h_{y},f \rangle_{\mathcal{H}_K}, \quad \forall f\in \mathcal{H}_K.
\end{equation}
In particular, by taking $f=K(x,\cdot)$ with $x\in M$, it holds that
\begin{align*}
h_{y}(x)&=\lim\limits_{i\rightarrow\infty}\Big \langle \frac{1}{t_i}(K^l(\phi(t_i),\cdot)-K^l(y,\cdot)), K(x,\cdot) \Big \rangle_{\mathcal{H}_K}\\
&=\lim\limits_{i\rightarrow\infty} \frac{1}{t_i}(K^{(l,0)}(\phi(t_i),x)-K^{(l,0)}K(y,x))=D^{(1,0)}K^{(l,0)}(y,x)\cdot v=D^{(l+1,0)}K(y,x)\cdot v.
\end{align*}
This is true for an arbitrary point $x\in M$. Hence $D^{(l+1,0)}K(y,\cdot)\cdot v=h_{y}$ as functions on $M$. Since $h_{y}\in \mathcal{H}_K$, we know $D^{(l+1,0)}K(y,\cdot)\cdot v\in \mathcal{H}_K$.

\medskip

\noindent {\bf  Step 2:} Proving the convergence 
\begin{align}\label{con-in-hk}
\frac{1}{t}\Big(K^l(\phi(t),\cdot)-K^l(y,\cdot)\Big) \longrightarrow D^{(l+1,0)}K(y,\cdot)\cdot v, \quad \text{in} \quad \mathcal{H}_K.
\end{align}\\
Using the notation introduced above, $K^{l+1}((y,v),\cdot)=D^{(l+1,0)}K(y,\cdot)\cdot v\in \mathcal{H}_K$. Then equation \eqref{wea-con} yields
\begin{align*}
\left\langle K^{l+1}((y,v),\cdot), K^{l+1}((y,v),\cdot)\right\rangle_{\mathcal{H}_K}&
=\lim\limits_{i\rightarrow\infty}\frac{1}{t_i} \left\langle K^{l+1}((y,v),\cdot), K^l(\phi(t_i),\cdot)-K^l(y,\cdot)\right\rangle_{\mathcal{H}_K}\\
=\ & \lim\limits_{i\rightarrow\infty}\frac{1}{t_i} \left( D^{(1,0)}K^{(l,l)}(y,\phi(t_i))\cdot v- D^{(1,0)}K^{(l,l)}(y,y)\cdot v\right)\\
=\ & D^{(1,1)}K^{(l,l)}(y,y)\cdot(v,v)=D^{(l+1,l+1)}K(y,y)\cdot (v,v).
\end{align*}\\
By equation \eqref{fun-the}, we obtain that
\begin{align*}
&\Big\|\frac{1}{t}\Big(K^l(\phi(t),\cdot)-K^l(y,\cdot)\Big)-K^{l+1}((y,v),\cdot)\Big\|^2_{\mathcal{H}_K}\\
=~&\frac{1}{t^2}\Big(K^{(l,l)}(\phi(t),\phi(t))-2K^{(l,l)}(\phi(t),y)+K^{(l,l)}(y,y)\Big)+K^{(l+1,l+1)}((y,v),(y,v))\\
~& -\frac{2}{t}\Big(K^{(l+1,l)}((y,v),\phi(t))-K^{(l+1,l)}((y,v),y)\Big)\\
=~&\frac{1}{t^2}\int_{0}^t\int_{0}^tK^{(l+1,l+1)}((\phi(s),\phi'(s)),(\phi(\tau),\phi'(\tau)))\mathrm{d}\tau\mathrm{d}s+K^{(l+1,l+1)}((y,v),(y,v))\\
~& -\frac{2}{t}\int_0^tK^{(l+1,l+1)}((\phi(\tau),\phi'(\tau)),(u,v))~\mathrm{d}\tau\\
=~&\frac{1}{t^2}\int_{0}^t\int_{0}^t\left[K^{(l+1,l+1)}((\phi(s),\phi'(s)),(\phi(\tau),\phi'(\tau))-K^{(l+1,l+1)}((\phi(s),\phi'(s)),(y,v))\right]\mathrm{d}\tau\mathrm{d}s\\
&-\frac{1}{t}\int_{0}^t\left[K^{(l+1,l+1)}((\phi(\tau),\phi'(\tau)),(y,v))-K^{(l+1,l+1)}((y,v),(y,v))\right]\mathrm{d}\tau\\
=~&\frac{1}{t^2}\int_{0}^t\int_{0}^t\int_0^\tau D^{(0,1)}K^{(l+1,l+1)}((\phi(s),\phi'(s)),(\phi(z),\phi'(z)))\cdot \psi'(z)\mathrm{d}z\mathrm{d}\tau\mathrm{d}s\\
~&-\frac{1}{t}\int_{0}^t\int_0^{\tau}D^{(1,0)}K^{(l+1,l+1)}((\phi(z),\phi'(z)),(y,v))\cdot\psi'(z)\mathrm{d}z\mathrm{d}\tau\\
=~&\frac{1}{t^2}\int_{0}^t\int_{0}^t\int_0^\tau D^{(l+1,l+2)}K(x,x)\cdot ((\phi(s),\phi'(s)),(\phi(z),\phi'(z)))\cdot \psi'(z)\mathrm{d}z\mathrm{d}\tau\mathrm{d}s\\
~&-\frac{1}{t}\int_{0}^t\int_0^{\tau}D^{(l+2,l+1)}K(x,x)\cdot((\phi(z),\phi'(z)),(y,v))\cdot\psi'(z)\mathrm{d}z\mathrm{d}\tau\\
\leq~&\frac{1}{t^2}\int_{0}^t\int_{0}^t\int_0^\tau\|D^{(l+1,l+2)}K\|_{\infty}\|\psi'(z)\|_{l+1}\mathrm{d}z\mathrm{d}\tau\mathrm{d}s+\frac{1}{t}\int_{0}^t\int_0^s\|D^{(l+2,l+1)}K\|_{\infty}\|\psi'(z)\|_{l+1}\mathrm{d}z\mathrm{d}s\\
=~& \frac{1}{6}\|D^{(l+1,l+2)}K\|_{\infty}~ t+\frac{1}{2}\|D^{(l+2,l+1)}K\|_{\infty}~ t
\leq \frac{1}{2} \|K\|_{C_b^{2l+3}} ~ t\leq \frac{1}{2}\|K\|_{C_b^{2s+1}} ~ t,
\end{align*}
where $\psi:[0,t]\to T^{l+1}M$ is chosen as a geodesic with constant speed 1, together with $\psi(0)=(y,v)$ and $\psi(z)=(\phi(z),\phi'(z))$, the second and fourth qualities are due to the formula \eqref{fun-the}, and the last two inequalities thank to $K\in C_b^{2s+1}(M\times M)$ and $2l+3\leq 2s+1$.
As $t\rightarrow 0$, \eqref{con-in-hk} follows.

\medskip

\noindent {\bf  Step 3:} Proving \eqref{dif-rep}. Let $f\in \mathcal{H}_K$. By (\ref{con-in-hk}) we have
\begin{equation*}
\begin{aligned}
D^{l+1}f(y)\cdot v&=\lim\limits_{t\rightarrow0}\frac{1}{t}\left ( D^lf(\phi(t))-D^lf(y)\right )=\lim\limits_{t\rightarrow0}\Big\langle \frac{1}{t}\left(K^l(\phi(t),\cdot)-K^l(y,\cdot)\right),f \Big \rangle_{\mathcal{H}_K}\\
&=\langle K^{(l+1,0)}((y,v),\cdot),f \rangle_{\mathcal{H}_K}=\langle D^{(l+1,0)}K(y,\cdot)\cdot v,f \rangle_{\mathcal{H}_K}.
\end{aligned}
\end{equation*}
That is, $D^{l+1}f(y)\cdot v$ exists and equals $\langle (D^{(l+1,0)}K(y,\cdot)\cdot v,f \rangle_{\mathcal{H}_K}$. This verifies (\ref{dif-rep}) for $l+1$. Hence, the parts {\bf (i)} and {\bf (ii)} follow.\\
We conclude by proving part {\bf (iii)} using \eqref{dif-rep} and \eqref{def-rep}. For $f\in\mathcal{H}_K$, and $x\in M$, the Cauchy-Schwarz inequality implies that, 
\begin{equation*}
\begin{aligned}
|f(x)|=|\langle K_x,f \rangle_{\mathcal{H}_K}|\leq \sqrt{K(x,x)}~\|f\|_{\mathcal{H}_K}\leq \sqrt{\|K\|_{\infty}}~\|f\|_{\mathcal{H}_K},
\end{aligned}
\end{equation*}
and for all $1\leq k\leq s$, $y\in T^{k-1}M$ and $v\in T_yT^{k-1}M$,
\begin{equation*}
\begin{aligned}
|D^kf(y)\cdot v|&=|\langle (D^{(k,0)}K(y,\cdot)\cdot v,f \rangle_{\mathcal{H}_K}|\leq \sqrt{D^{(k,k)}K(y,y)\cdot (v,v)}~\|f\|_{\mathcal{H}_K}\leq \sqrt{\|D^{(k,k)}K\|_{\infty}}~\|v\|_{k-1}\|f\|_{\mathcal{H}_K}.
\end{aligned}
\end{equation*}
Therefore, $f\in C^{k}_b(M)$ and $\|D^kf\|_{\infty}\leq \sqrt{\|D^{(k,k)}K\|_{\infty}}~\|f\|_{\mathcal{H}_K}$. Denote $\kappa=\sqrt{(s+1)\|K\|_{C_b^2}}$.
It follows that
\begin{align*}
\|f\|_{C^{s}_b}&=\|f\|_{\infty}+\sum_{k=1}^s\|D^kf\|_{\infty}\leq 
\left(\sqrt{\|K\|_{\infty}}+\sum_{k=1}^s\sqrt{\|D^{(k,k)}K\|_{\infty}}\right)~\|f\|_{\mathcal{H}_K}\\
&\leq \sqrt{(s+1)\left(\|K\|_{\infty}+\sum_{k=1}^s\|D^{(k,k)}K\|_{\infty}\right)}~\|f\|_{\mathcal{H}_K}\leq\sqrt{(s+1)\|K\|_{C_b^2}}~\|f\|_{\mathcal{H}_K}=\kappa\|f\|_{\mathcal{H}_K}.
\end{align*}

\subsection{The proof of Proposition \ref{BoundA}}\label{proof of boundA}

Since $K\in C_b^3(P\times P)$, Theorem \ref{Par-Rep} yields that for any $h\in \mathcal{H}_K$ and $z\in P$, 
\begin{align*}
g(z)(\nabla h(z),\nabla h(z))\leq \|h\|_{C_b^1} \left(g(z)(\nabla h(z),\nabla h(z))\right)^{\frac{1}{2}} \leq \kappa  \|h\|_{\mathcal{H}_K} \left(g(z)(\nabla h(z),\nabla h(z))\right)^{\frac{1}{2}},
\end{align*}
which implies that $g(z)(\nabla h(z),\nabla h(z))\leq \kappa^2 \|h\|_{\mathcal{H}_K}^2$.
Then by Assumption \ref{Con-com} and equation \eqref{rep-ham-vector-field}, we have that
\begin{align}\label{norm-control}
g(z)(X_h(z),X_h(z))=g(z)(J(z)\nabla h(z),J(z)\nabla h(z))\leq \gamma(z) g(z)(\nabla h(z),\nabla h(z))\leq
\kappa^2\gamma(z) \|h\|_{\mathcal{H}_K}^2.
\end{align}
Thus, the operator $A$ is bounded linear from $\mathcal{H}_K$ to $L^2(P,\mu_{\mathbf{Z}})$ since
\begin{align*}
\|Ah\|^2_{L^2(\mu_{\mathbf{Z}})}= \int_ {P} g(z)( X_h(z), X_h(z))\mathrm{d}\mu_{\mathbf{Z}}(z)\leq\int_ {P}\kappa^2\gamma(z) \|h\|_{\mathcal{H}_K}^2\mu_{\mathbf{Z}}(z)\leq  \kappa^2C^2\|h\|_{\mathcal{H}_K}^2,
\end{align*}
Furthermore, for any vector field $Y\in  L^2(P,\mu_{\mathbf{Z}})$, Corollary \ref{Wel-Ope} and Proposition \ref{Casimir property} imply that
\begin{align*}
\langle Ah, Y\rangle_{L^2(\mu_{\mathbf{Z}})} &=\int_ {P} g(z)(X_h(z),Y(z))\mathrm{d}\mu_{\mathbf{Z}}(z) =\int_ {P} g(z)\left( \nabla h(z), -J(z)Y(z)\right)\mathrm{d}\mu_{\mathbf{Z}}(z)\\
&=\int_ {P} (-JY)[h](z)\mathrm{d}\mu_{\mathbf{Z}}(z)=\int_ {P} (-JY)[\langle h,K_{\cdot}\rangle_{\mathcal{H}_K}](z)\mathrm{d}\mu_{\mathbf{Z}}(z)\\
&=\int_ {P} \frac{d}{dt}\mid_{t=0}[\langle h,K_{F_t(z)}\rangle_{\mathcal{H}_K}]\mathrm{d}\mu_{\mathbf{Z}}(z)=\left\langle h,\int_ {P} (-JY)[K_{\cdot}](z)\mathrm{d}\mu_{\mathbf{Z}}(z)\right\rangle_{\mathcal{H}_K}.
\end{align*}
where $F_t$ is the flow of the vector field $-JY$ and Remark \ref{Par-Rep1}{\bf (i)} shows that $(-JY)[K_{\cdot}](z)\in \mathcal{H}_K$. This leads to 
\begin{align*}
(A^*Y)(y) =\int_ {P} (-JY)[K_{y}](z)\mathrm{d}\mu_{\mathbf{Z}}(z)=\int_ {P} g(z)( X_{K_{y}}(z), Y(z))\mathrm{d}\mu_{\mathbf{Z}}(z).
\end{align*}
Note that hence, for each $h\in\mathcal{H}_K$,
\begin{align*}
(Qh)(y):&=(A^*Ah)(y) = (A^* X_h)(y)=\int_ {P} g(z)( X_{K_y}(z), X_h(z))\mathrm{d}\mu_{\mathbf{Z}}(z).
\end{align*}  

We now show that $Q$ is a trace class operator; that is, we show that $\operatorname{Tr}(|Q|)<\infty$, where $|Q|=\sqrt{Q^* Q}$. The form of the operator $Q=A^{*}A$ automatically guarantees that it is positive semidefinite, and hence we have that $|Q|=Q$. Therefore, it is equivalent to show that $\operatorname{Tr}(Q)<\infty$. To do that, we choose a countable spanning orthonormal set $\left\{e _n\right\} _{n \in \mathbb{N}}$ for ${\mathcal H} _K$ whose existence is guaranteed by the continuity of the canonical feature map associated to $K$ that we established in \cite[Lemma A.3]{hu2024structure} and \cite[Theorem 2.4]{owhadi2017separability}. Then,
$$
\begin{aligned}
\operatorname{Tr}(Q)&=\operatorname{Tr}\left(A^* A\right) =\sum_n\left\langle A^* A e_n, e_n\right\rangle_{\mathcal{H}_K}=\sum_n\left\langle A e_n, A e_n\right\rangle_{L^2\left(\mu_{\mathbf{Z}}\right)} \\
& =\sum_n\int_{P}g(z)( X_{e_n}(z), X_{e_n}(z))\mathrm{d}\mu_{\mathbf{Z}}(\mathbf{x})  \\
& \leq C^2\sum_n\int_{P}g(z)( \nabla{e_n}(z), \nabla{e_n}(z))\mathrm{d}\mu_{\mathbf{Z}}(\mathbf{x})\\
&\leq dC^2\|K\|_{C_b^2}= \frac{1}{2}dC^2\kappa^2<\infty,
\end{aligned}
$$
where the sixth inequality is derived by choosing an arbitrary orthonormal basis $\{v_i\}_{i=1}^d$ of the vector space $T_{z}P$ as follows.
\begin{align*}
\sum_ng(z)( \nabla{e_n}(z), \nabla{e_n}(z))&= \sum_nDe_n(z)\cdot \nabla{e_n}(z) = \sum_n\langle D^{(1,0)}K(z,\cdot)\cdot \nabla{e_n}(z),e_n\rangle_{\mathcal{H}_K}\\
&=\sum_n\left\langle D^{(1,0)}K(z,\cdot)\cdot \left(\sum_{i=1}^dg(z)(\nabla{e_n}(z),v_i)v_i\right),e_n\right\rangle_{\mathcal{H}_K}\\
&=\sum_n\sum_{i=1}^d\left\langle D^{(1,0)}K(z,\cdot)\cdot v_i,g(z)(\nabla{e_n}(z),v_i)e_n\right\rangle_{\mathcal{H}_K}\\
&=\sum_{i=1}^d\left\langle D^{(1,0)}K(z,\cdot)\cdot v_i,\sum_n(D{e_n}(z)\cdot v_i)e_n\right\rangle_{\mathcal{H}_K}\\
&=\sum_{i=1}^d\left\langle D^{(1,0)}K(z,\cdot)\cdot v_i,\sum_n\langle D^{(1,0)}K(z,\cdot)\cdot v_i,e_n\rangle_{\mathcal{H}_K}e_n\right\rangle_{\mathcal{H}_K}\\
&=\sum_{i=1}^d\left\langle D^{(1,0)}K(z,\cdot)\cdot v_i,D^{(1,0)}K(z,\cdot)\cdot v_i\right\rangle_{\mathcal{H}_K}\\
&=\sum_{i=1}^dD^{(1,1)}K(z,z)\cdot (v_i,v_i)\leq d\|K\|_{C_b^2}.
\end{align*}

\subsection{The proof of Proposition \ref{Pos-Gra}}

Firstly, we prove the symmetry of $G_N$. It is sufficient to show that $g_N(\mathbf{c},G_N\mathbf{c}')=g_N(\mathbf{c}',G_N\mathbf{c})$ for $\mathbf{c},\mathbf{c}'\in T_{\mathbf{Z}_N}P$ arbitrary. Denote $\mathbb{J}_N=\operatorname{diag}\{J(\mathbf{Z}^{(1)}),\ldots,J(\mathbf{Z}^{(N)})\}$. Then by Remark \ref{Par-Rep1}{\bf (i)}, we obtain that for any $\mathbf c\in T_{\mathbf{Z}_N}P$,
\begin{align*}
g_N(\mathbf{c},X_{K_{\cdot}}(\mathbf{Z}_N))= D^{(1,0)}K(\mathbf{Z}_N,\cdot)\cdot \left(-\mathbb{J}_N\mathbf{c}\right).  
\end{align*}
Therefore, again by Remark \ref{Par-Rep1}{\bf (i)}, we have that
\begin{align*}
g_N(\mathbf{c},G_N\mathbf{c}')=g_N(\mathbf{c},X_{D^{(1,0)}K(\mathbf{Z}_N,\cdot)\cdot \left(-\mathbb{J}_N\mathbf{c}'\right)}(\mathbf{Z}_N)) = D^{(1,1)}K(\mathbf{Z}_N,\mathbf{Z}_N)(-\mathbb{J}_N\mathbf{c}',-\mathbb{J}_N\mathbf{c})
\end{align*}
Since $K$ is a symmetric function, $D^{(1,1)}K(\mathbf{Z}_N,\mathbf{Z}_N)$ is also symmetric by definition. This proves that $G_N$ is symmetric. 

We now show the positive semi-definiteness of $G_N$. Let $L=dN$. Since $G_N$ is real symmetric, there exists an orthonormal matrix $O\in T_{\mathbf{Z}_N}P \times T_{\mathbf{Z}_N}P$ that diagonalizes $G_N$, that is
\begin{align*}
G_N=ODO^{\top}
=\begin{bmatrix}
|&|&\dots&|\\
f_1&f_2&\dots&f_L\\
|&|&\dots&|
\end{bmatrix}
\begin{bmatrix}
d_1&0&\dots&0 \\
0&d_2&\dots&0 \\
\vdots&\vdots&\ddots&\vdots\\
0&0&\dots&d_L
\end{bmatrix}\begin{bmatrix}
|&|&\dots&|\\
f_1&f_2&\dots&f_L\\
|&|&\dots&|
\end{bmatrix}^{\top},
\end{align*} 
where  $\left \{d_i \right\}_{i=1}^{L}$ and $\left \{f_i \right\}_{i=1}^{L}$ are the real eigenvalues and the corresponding eigenvectors of $G_N$, respectively.
We now define $\widetilde{e}_i=g_N\left( f_i, X_{K_{\cdot}}(\mathbf{Z}_N) \right)$. By Remark \ref{Par-Rep1}{\bf (i)}, we obtain that $\widetilde{e}_i\in\mathcal{H}_K$ and that
\begin{equation*}
\begin{aligned}
\|\widetilde{e}_i\|_{\mathcal{H}_K}^2 &=\left\langle g_N\left( f_i, X_{K_{\cdot}}(\mathbf{Z}_N) \right),g_N\left( f_i, X_{K_{\cdot}}(\mathbf{Z}_N) \right)\right\rangle_{\mathcal{H}_K}\\
&= \left\langle D^{(1,0)}K(\mathbf{Z}_N,\cdot )\cdot(-\mathbb{J}_Nf_i), D^{(1,0)}K(\mathbf{Z}_N,\cdot )\cdot(-\mathbb{J}_Nf_i)\right\rangle_{\mathcal{H}_K} \\
&= D^{(1,1)}K(\mathbf{Z}_N,\mathbf{Z}_N)(-\mathbb{J}_Nf_i,-\mathbb{J}_Nf_i)=g_N(f_i,G_Nf_i)
=d_i.
\end{aligned}
\end{equation*}
Hence, $d_i\geq0$ for all $i=1,\ldots,L$ and we can conclude that $G_N$ is positive semi-definite.

\subsection{The proof of Lemma \ref{operator error}}\label{proof of operator error}
\begin{proof}
Since $K\in C_b^3(P\times P)$, then for any  $h\in\mathcal{H}_K$, we have 
\begin{align*}
\|Q_Nh\|_{\mathcal{H}_K}^2&=\frac{1}{N}\|g_N(X_h(\mathbf{Z}_N),X_{K_{\cdot}}(\mathbf{Z}_N))\|^2_{\mathcal{H}_K}=\frac{1}{N}\|D^{(1,0)}K(\mathbf{Z}_N,\cdot)\cdot (-JX_h)(\mathbf{Z}_N)\|^2_{\mathcal{H}_K}\\
&=\frac{1}{N}D^{(1,1)}K(\mathbf{Z}_N,\mathbf{Z}_N)\cdot ((-JX_h)(\mathbf{Z}_N),(-JX_h)(\mathbf{Z}_N))\\
&\leq \frac{1}{N}\|D^{(1,1)}K\|_{\infty} g_N(-JX_h(\mathbf{Z}_N),-JX_h(\mathbf{Z}_N))\leq \frac{C^2}{2N}\kappa^2g_N(X_h(\mathbf{Z}_N),X_h(\mathbf{Z}_N))\\
&\leq \frac{1}{2}C^4\kappa^4\|h\|^2_{\mathcal{H}_K}<\infty,
\end{align*}
which shows that $Q_Nh$ are bounded random variables in $\mathcal{H}_K$ for all $N\geq1$. Moreover, we have $\mathbb{E}\left[\|Q_Nh\|^2_{\mathcal{H}_K}\right]\leq \frac{1}{2}C^4\kappa^4\|h\|^2_{\mathcal{H}_K}$.  Define now the $\mathcal{H}_K$-valued random variables
$$
\xi^{(n)}=g(\mathbf{Z}^{(n)})(X_h(\mathbf{Z}^{(n)}),X_{K_\cdot}(\mathbf{Z}^{(n)})), \quad \mbox{$n=1, \ldots, N$.} 
$$ 
Note that the random variables $\{\xi^{(n)}\}_{n=1}^{N}$ are IID and that 
$Q_Nh-Qh=\frac{1}{N}\sum_{n=1}^N (\xi^{(n)}-\mathbb{E}(\xi^{(n)})).$ The result follows by applying \cite[Lemma 8]{de2005learning} or \cite[Lemma A.2]{yurinsky1995sums} to $\{\xi^{(n)}\}_{n=1}^{N}$. 
\end{proof}

\subsection{The proof of Lemma \ref{noisy sampling error}}
\label{proof of noisy sampling error}
\begin{proof}
We first notice that, by the symmetry of $G_N$, for any $\mathbf{c},\mathbf{c}^{\prime}\in T_{\mathbf{Z}_N}P$, it holds that
\begin{equation} \label{symmetry_of_G_N}
    g_N((G_N+\lambda NI)^{-1}\mathbf{c},\mathbf{c}^{\prime})=g_N(\mathbf{c},(G_N+\lambda NI)^{-1}\mathbf{c}^{\prime}).
\end{equation}

Following an approach similar to the one in Theorem \ref{Rep-Ker}, we obtain that
\begin{align*}
\big\|\widehat{h}_{\lambda,N}-\widetilde h_{\lambda,N}\big\|_{\mathcal{H}_K}^2 &=\|g_N(X_{K_{\cdot}}(\mathbf{Z}_N),(G_N+\lambda NI)^{-1}\mathbf{E}_N)\|_{\mathcal{H}_K}^2\\
&= D^{(1,1)}K(\mathbf{Z}_N,\mathbf{Z}_N)(-\mathbb{J}_N(G_N+\lambda NI)^{-1}\mathbf{E}_N,-\mathbb{J}_N(G_N+\lambda NI)^{-1}\mathbf{E}_N)\\
&:=g_N(\mathbf{E}_N,\Sigma_N\mathbf{E}_N),
\end{align*} 
where $G_N$ is defined in \eqref{general-Gram} and $\Sigma_N$ is a matrix in $T_{\mathbf{Z}_N}P\times T_{\mathbf{Z}_N}P$ defined by
\begin{align*}
\Sigma_N \mathbf{c}:= (G_N+\lambda NI)^{-1}X_{g_N((G_N+\lambda NI)^{-1}\mathbf{c},X_{K_{\cdot}}(\mathbf{Z}_N))}(\mathbf{Z}_N),\quad \mathbf{c}\in T_{\mathbf{Z}_N}P.  
\end{align*}
Indeed, in light of \eqref{symmetry_of_G_N}, the above definition can be verified since
\begin{align*}
g_N(\mathbf{E}_N,\Sigma_N\mathbf{E}_N)&=g_{N}(\mathbf{E}_N,(G_N+\lambda NI)^{-1}X_{g_N((G_N+\lambda NI)^{-1}\mathbf{E}_N,X_{K_{\cdot}}(\mathbf{Z}_N))}(\mathbf{Z}_N))\\
&=g_{N}((G_N+\lambda NI)^{-1}\mathbf{E}_N,X_{g_N((G_N+\lambda NI)^{-1}\mathbf{E}_N,X_{K_{\cdot}}(\mathbf{Z}_N))}(\mathbf{Z}_N))\\
&=D^{(1,1)}K(\mathbf{Z}_N,\mathbf{Z}_N)(-\mathbb{J}_N(G_N+\lambda NI)^{-1}\mathbf{E}_N,-\mathbb{J}_N(G_N+\lambda NI)^{-1}\mathbf{E}_N).    
\end{align*}
Choose a basis of the vector space $T_{\mathbf{Z}_N}P$ consisting of orthonormal eigenvectors $\{f_i\}_{i=1}^{dN}$ of $G_N$. Note that $\{f_i\}_{i=1}^{dN}$ are also eigenvectors of $\Sigma_N$. Then,
\begin{align*}
\mathrm{Tr}(\Sigma_N)&=\sum_{i=1}^{dN}g_N(f_i,\Sigma_Nf_i)\leq \frac{1}{\lambda^2N^2}\sum_{i=1}^{dN} D^{(1,1)}K(\mathbf{Z}_N,\mathbf{Z}_N)(-\mathbb{J}_Nf_i,-\mathbb{J}_Nf_i)\\
&\leq \frac{1}{\lambda^2N^2}\sum_{i=1}^{dN}\|D^{(1,1)}K\|_{\infty}\cdot g_N(-\mathbb{J}_Nf_i,-\mathbb{J}_Nf_i)\\
&\leq \frac{1}{\lambda^2N^2}\|K\|_{C_b^2}\sum_{i=1}^{dN} g_N(-\mathbb{J}_Nf_i,-\mathbb{J}_Nf_i)\\
&\leq \frac{1}{\lambda^2N^2}\|K\|_{C_b^2}\sum_{i=1}^{dN}\left( \sum_{j=1}^N g(-Jf^{(j)}_i,-Jf^{(j)}_i)\right )\\
&\leq \frac{1}{\lambda^2N^2}\|K\|_{C_b^2}\sum_{i=1}^{dN}\left( \sum_{j=1}^NC^2\cdot g(f^{(j)}_i,f^{(j)}_i)\right )\\
&= \frac{C^2}{\lambda^2N^2} \|K\|_{C_b^2}\sum_{i=1}^{dN}g_N(f_i,f_i)= \frac{dC^2}{\lambda^2N} \|K\|_{C_b^2}=\frac{\kappa^2C^2d}{2\lambda^2N},
\end{align*}
with $\kappa^2=2\|K\|_{C_b^{2}}$ as introduced in the statement of Proposition \ref{Par-Rep}. The rest of the proof follows from the same argument as in \cite[Appendix C]{hu2024structure}.
\end{proof}

\section*{Acknowledgments}
The authors thank Lyudmila Grigoryeva for helpful discussions and remarks and acknowledge partial financial support from the School of Physical and Mathematical Sciences of the Nanyang Technological University. DY is funded by a Nanyang President's Graduate Scholarship of Nanyang Technological University.

\footnotesize
\addcontentsline{toc}{section}{References}
\bibliographystyle{wmaainf}
\bibliography{Refs}
\end{document}